\tikzset{snake it/.style={decorate, decoration=snake}}
\tikzset{cross/.style={cross out, draw, 
         minimum size=2*(#1-\pgflinewidth), 
         inner sep=0pt, outer sep=0pt}}
\DeclareMathSymbol{\lsb@l}{\mathalpha}{letters}{`l}
\newtheorem{theorem}{Theorem}
\newtheorem{proposition}{Proposition}
\newtheorem{lemma}{Lemma}
\theoremstyle{definition}
\newcommand{\details}[1]{}
\newcommand{\Q}{\mathbb{Q}}
\newcommand{\Z}{\mathbb{Z}}
\newcommand{\R}{\mathbb{R}}
\newcommand{\N}{\mathbb{N}}
\newcommand{\mc}{\mathcal}
\newcommand{\dimH}{\dim_{\mathrm H}}
\newcommand{\bs}{\boldsymbol}
\newcommand{\mb}{\mathbb}
\newcommand{\vol}{\textup{vol}}
\DeclarePairedDelimiter{\floor}{\lfloor}{\rfloor}
\title[On the Hausdorff Dim. of Weighted Exactly Approximable Vectors]{ On the Hausdorff Dimension of weighted exactly Approximable Vectors}
\begin{document}

\begin{abstract}
We show that the Hausdorff dimension of \(\bs w\)-weighted \(\tau\)-exactly approximable vectors in $\mb R^d$ coincides with the Hausdorff dimension of \(\bs w\)-weighted \(\tau\)-approximable vectors, generalizing a result of the first named author and De Saxc\'e.
\end{abstract}

\author[P. Bandi]{Prasuna Bandi}
\address{Department of Mathematics, University of Michigan, Ann Arbor}
\email{prasuna@umich.edu}

\author[R. Fregoli]{Reynold Fregoli}
\address{Department of Mathematics, University of Michigan, Ann Arbor}
\email{fregoli@umich.edu}

\subjclass{11J13, 11P21}

\thanks{P.B. acknowledges support from AMS-Simons travel grant. }

\maketitle

\section{Introduction}

\noindent
A vector \(\bs w=(w_1,\dots,w_d)\in(0,1)^d\) is called a \emph{weight vector} if
\[
0<w_1\le \dotsb \le w_d \quad\text{and}\quad \sum_{i=1}^d w_i=1.\vspace{-1mm}
\]
For any weight vector \(\bs w\) and $\bs x\in\mb R^d$ set
\[
\|\bs x\|_{\bs w}:=\max\left\{|x_i|^{1/w_i}: i=1,\dotsc,d\right\}.
\]
Let \(\tau>1\) and \(c>0\). The set of \emph{\(\bs w\)-weighted \((c,\tau)\)-approximable} vectors is defined as\vspace{2mm}
\[
W_{\bs w}(c,\tau):=\Big\{\bs x\in[0,1)^d:\|q\bs x-\bs p\|_{\bs w}
< c q^{-\tau}\ \text{for infinitely many }(\bs p,q)\in\mb Z^{d}\times\N\Big\}.\vspace{2mm}
\]
When $c=1$, let \(W_{\bs w}(\tau):=W_{\bs w}(1,\tau)\), and, in the special case \(w_1=\dotsb=w_d=1/d\), let \(W_d(\tau):=W_{\bs w}(\tau)\). \\

Determining the size of the sets \(W_{\bs w}(\tau)\) for \(\tau>1\) is a classical problem in Diophantine approximation.
The Hausdorff dimension of \(W_{\bs w}(\tau)\) was first computed by Rynne in \cite{Ryn98}. We report below \cite[Theorem 1]{Ryn98} in the special case $Q=\mb N$.

\begin{theorem}[Rynne]
\label{thm:Rynne}
For any weight vector $\bs w\in \mb R^d$ and any $\tau>1$
$$\dimH W_{\bs w}(\tau)=\min_{k=1,\dotsc,d}\frac{d+1+\sum_{i=1}^k\tau w_k-\tau w_i}{1+\tau w_k},$$
where $\dimH$ denotes the Hausdorff dimension.
\end{theorem}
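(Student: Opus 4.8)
The plan is to recast \(W_{\bs w}(\tau)\) as a \(\limsup\) set of rectangles and prove the two matching bounds for \(\dimH W_{\bs w}(\tau)\) separately. Unwinding the definition, \(\bs x\in W_{\bs w}(\tau)\) precisely when there are infinitely many \(q\in\N\) admitting \(\bs p\in\Z^d\) with \(|x_i-p_i/q|<c^{w_i}q^{-1-\tau w_i}\) for every \(i\); thus \(W_{\bs w}(\tau)=\limsup_{q\to\infty}\bigcup_{\bs p}R(\bs p,q)\), where \(R(\bs p,q)\subseteq[0,1)^d\) is the rectangle centred at \(\bs p/q\) with half-side-length \(c^{w_i}q^{-1-\tau w_i}\) in the \(i\)-th coordinate direction. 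Since \(w_1\le\dots\le w_d\), the first side of \(R(\bs p,q)\) is the longest and the last the shortest, and only \(\ll q^d\) choices of \(\bs p\) give a rectangle meeting \([0,1)^d\).

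\emph{Upper bound.} Fix \(k\in\{1,\dots,d\}\). For each \(Q\) the rectangles \(R(\bs p,q)\) with \(q\ge Q\) cover \(W_{\bs w}(\tau)\); partition each into cubes of side \(\asymp q^{-1-\tau w_k}\), which requires \(\asymp q^{\tau w_k-\tau w_i}\) cubes in each direction \(i<k\) and one in each direction \(i\ge k\), hence \(\asymp q^{\sum_{i=1}^{k}(\tau w_k-\tau w_i)}\) cubes in total, each of diameter \(\asymp q^{-1-\tau w_k}\). The resulting \(s\)-dimensional Hausdorff sum is therefore
\[
\ll\ \sum_{q\ge Q}\ q^{d}\, q^{\sum_{i=1}^{k}(\tau w_k-\tau w_i)}\, q^{-(1+\tau w_k)s},
\]
which tends to \(0\) as \(Q\to\infty\) whenever \(s>\bigl(d+1+\sum_{i=1}^{k}(\tau w_k-\tau w_i)\bigr)/(1+\tau w_k)\). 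Letting \(s\) decrease to this threshold and then minimising over \(k\) yields \(\dimH W_{\bs w}(\tau)\le\min_{k}\bigl(d+1+\sum_{i=1}^{k}(\tau w_k-\tau w_i)\bigr)/(1+\tau w_k)\).

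\emph{Lower bound.} Here I would appeal to the mass transference principle for rectangles (in the form due to Wang and Wu), with reference scale \(\rho_q=q^{-1}\), source rectangles \(\prod_i B(p_i/q,\,\rho_q^{\,1+w_i})\) and target rectangles \(R(\bs p,q)=\prod_i B(p_i/q,\,\rho_q^{\,1+\tau w_i})\), noting that \(1+\tau w_i\ge 1+w_i\) and that both exponent vectors are monotone. Its hypothesis is that \(\limsup_{q\to\infty}\bigcup_{\bs p}\prod_i B(p_i/q,\,q^{-1-w_i})\) — equivalently the set of \(\bs x\) with \(\|q\bs x-\bs p\|_{\bs w}<q^{-1}\) for infinitely many \((\bs p,q)\) — has full Lebesgue measure in \([0,1)^d\); this follows from the weighted form of Dirichlet's theorem (Minkowski's convex body theorem applied to \(\{|t|\le N,\ |tx_i-y_i|\le N^{-w_i}\}\), of volume \(2^{d+1}\)), together with the standard argument that the solution set is infinite unless \(\bs x\) is rational, in which case \(\bs x\in W_{\bs w}(\tau)\) trivially. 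The principle then produces a lower bound for \(\dimH W_{\bs w}(\tau)\), and a direct optimisation identifies it with the expression in the statement.

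The main obstacle is the lower bound — not the full-measure input, which is classical, but checking that the quantity output by the rectangular mass transference principle really equals \(\min_{k}\bigl(d+1+\sum_{i=1}^{k}(\tau w_k-\tau w_i)\bigr)/(1+\tau w_k)\): the \(\min\) over \(k\) encodes the fact that the ``critical'' coordinate direction governing the optimal cube size shifts with \(\bs w\) and \(\tau\), so the exponents must be tracked carefully for the correct bottleneck to emerge. A more self-contained route, closer to Rynne's original argument, replaces the transference principle by an explicit Cantor-set construction inside \(W_{\bs w}(\tau)\) together with the mass distribution principle; there the technical core is a counting/ubiquity estimate ensuring that, for \(q\) in a dyadic window, the reduced fractions \(\bs p/q\) equidistribute inside a given box well enough that a definite proportion of the rectangles \(R(\bs p,q)\) nest inside a rectangle selected at the previous level.
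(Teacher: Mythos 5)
The paper does not prove this theorem: it is quoted from Rynne \cite{Ryn98} and used as a black box (it enters only at the very end, to identify the lower bound obtained for $\dimH E_{\bs w}(\tau)$ with $\dimH W_{\bs w}(\tau)$). So there is no internal proof to compare against, and your proposal must be judged on its own.

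Your upper bound is correct and essentially complete: the natural cover of the $\limsup$ set, the anisotropic subdivision of each rectangle into cubes of side $\asymp q^{-1-\tau w_k}$, the count of $\asymp q^{\sum_{i\le k}\tau(w_k-w_i)}$ cubes per rectangle (using $w_1\le\dots\le w_d$), and the convergence criterion for the $s$-dimensional sum all check out and yield exactly the stated exponent for each $k$. The lower bound, however, is only a plan. The full-measure input (weighted Dirichlet via Minkowski) is fine, but the step you yourself flag as the main obstacle --- verifying that the exponent returned by the Wang--Wu rectangles-to-rectangles mass transference principle, with source exponents $1+w_i$ and target exponents $1+\tau w_i$, minimises out to $\min_{k}\bigl(d+1+\sum_{i\le k}\tau(w_k-w_i)\bigr)/(1+\tau w_k)$ --- is precisely where the content of the theorem lies, and it is asserted rather than carried out. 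That principle outputs a minimum over cut-off levels drawn from the set $\{1+w_j\}\cup\{1+\tau w_j\}$, each inducing a three-way partition of the coordinates, and one must actually check that the candidates coming from cut-offs $1+w_j$ never fall below those coming from cut-offs $1+\tau w_k$; without this the lower bound is not established. (A minor point: Rynne's 1998 argument necessarily proceeds by the direct Cantor-set/ubiquity route you mention as an alternative, since the rectangular MTP postdates it, but that does not bear on correctness.) As written, the proposal is a sound strategy with the decisive optimisation missing.
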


A natural extension of this problem regards the dimension of set of \emph{\(\bs w\)-weighted \(\tau\)-exactly-approximable} vectors
\[
E_{\bs w}(\tau):=W_{\bs w}(1,\tau)\setminus\bigcup_{0<c<1} W_{\bs w}(c,\tau).
\]
When the weights $w_i$ are all equal, let us write \(E_d(\tau)\) for \(E_{\bs w}(\tau)\).
The set \(E_d(\tau)\) was first introduced by Jarn\'ik~\cite{Jarnik} in 1931. Jarn\'ik proved that for every $d\geq 1$, the set
\(E_d(\tau)\) is uncountable whenever \(\tau>d\). He further conjectured that the same conclusion should remain true under the weaker condition $\tau>1$.

The precise size of \(E_d(\tau)\) was determined only much later by Bugeaud~\cite{zbMATH02001053}, who proved that \(\dimH E_1(\tau)=\dimH W_1(\tau)\) for all \(\tau>1\).
More refined versions of this result, allowing general approximating functions, were subsequently obtained by Bugeaud~\cite{zbMATH05593660} and by Bugeaud and Moreira~\cite{zbMATH05836403}.

% An interesting variation on this problem consists in studying the subset
% $$E_{\bs w}(\tau):=W_{\bs w}(1,\tau)\setminus\bigcup_{0<c<1}W_{\bs w}(c,\tau),$$
% known as the subset of \emph{$\bs w$-weighted $\tau$-exactly-approximable} vectors.
% When $\bs w=(1/d,\dotsc,1/d)$, we write $E_d(\tau)$ for $E_{\bs w}(\tau)$. The set $E_d(\tau)$ was first introduced by Jarn\'ik in \cite{Jarnik} (1931), where it was established that $E_d(\tau)$ is uncountable for $d\geq 1$ and $\tau >d$.

% The exact size of the set $E_d(\tau)$ was established only much later. Bugeaud \cite{zbMATH02001053} showed that $\dimH E_1(\tau)=\dimH W_1(\tau)$ for $\tau>1$ and more refined versions of this result for general approximating functions were subsequently proved in \cite{zbMATH05593660} by Bugeaud and in \cite{zbMATH05836403} by Bugeaud and Moreira. 

In higher dimension, the problem of establishing the size of the set $E_{d}(\tau)$ remained open for several years. Partial progress was made in \cite{zbMATH07875272} in the case $\tau>d$. The problem was subsequently resolved in full by the first named author and De Saxc\'e \cite{zbMATH08109711}. Their work treats more general approximating functions and, in particular, answers the question of Jarn\'ik by showing $\dimH E_d(\tau)=\dimH W_d(\tau)$ for $\tau>1$.

For further results on the exact approximtion set in other frameworks---such as metric spaces and formal power series---as well as on closely related Diophantine sets, see \cite{zbMATH07656381},\cite{arXiv:2503.06110}, \cite{zbMATH01679547}, \cite{zbMATH07935883}, \cite{zbMATH08012064}\\
% The concept of exact approximation may be formulated in greater generality, e.g., in metric spaces, and a similar dimensional problem can be posed in this setup. The reader may refer to \cite{zbMATH07656381}, \cite{zbMATH07935883}, and \cite{zbMATH08012064} for further details on this.

In this paper, we are concerned with the classical case of the problem. We extend the result of \cite{zbMATH08109711} to all choices of weights, as follows.

\begin{theorem}
\label{thm:mainthm}
For any weight vector $\bs w\in \mb (0,1)^d$ and any $\tau>1$ it holds that
$$\dimH E_{\bs w}(\tau)=\dimH W_{\bs w}(\tau).$$
\end{theorem}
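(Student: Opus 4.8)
Since $E_{\bs w}(\tau)\subseteq W_{\bs w}(\tau)$, the bound $\dimH E_{\bs w}(\tau)\le\dimH W_{\bs w}(\tau)$ follows at once from Theorem~\ref{thm:Rynne}, so the whole content is the reverse inequality. The plan is to adapt the Cantor-set construction of \cite{zbMATH08109711} to the anisotropic geometry of the norm $\|\cdot\|_{\bs w}$. First I would recast membership in $E_{\bs w}(\tau)$ analytically: with $\psi_{\bs x}(q):=\min_{\bs p\in\mb Z^d}\|q\bs x-\bs p\|_{\bs w}$, one has $\bs x\in E_{\bs w}(\tau)$ if and only if $\liminf_{q\to\infty}q^{\tau}\psi_{\bs x}(q)=1$ and $q^{\tau}\psi_{\bs x}(q)<1$ for infinitely many $q$; equivalently, there are $q_k\to\infty$ and $\bs p_k$ with $q_k^{\tau}\|q_k\bs x-\bs p_k\|_{\bs w}<1$ and $q_k^{\tau}\|q_k\bs x-\bs p_k\|_{\bs w}\to1$, while for each $\eps>0$ one has $q^{\tau}\psi_{\bs x}(q)\ge1-\eps$ for all large $q$. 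The building blocks are the boxes
\[
\Delta_{\bs w}(\bs p/q,c):=\bigl\{\bs x\in[0,1)^d:|x_i-p_i/q|<c^{w_i}q^{-\tau w_i-1},\ 1\le i\le d\bigr\},
\]
so that $q^{\tau}\|q\bs x-\bs p\|_{\bs w}<c$ means $\bs x\in\Delta_{\bs w}(\bs p/q,c)$, and the condition above says: $\bs x$ lies in infinitely many boxes $\Delta_{\bs w}(\bs p_k/q_k,1)$, near their boundary, and eventually avoids every box $\Delta_{\bs w}(\bs p/q,1-\eps)$.

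Fix $s<\dimH W_{\bs w}(\tau)$ and let $k_0$ attain the minimum in Theorem~\ref{thm:Rynne}; this index governs the split of coordinates into a ``rigid'' block $\{1,\dots,k_0\}$, along which rational points are tracked sharply, and a ``free'' block, along which boxes are only resolved at a coarser scale --- exactly the split producing Rynne's exponent. I would pick rapidly increasing denominators $q_1\ll q_2\ll\cdots$ and sequences $\eps_k\downarrow0$, $\delta_k\downarrow0$ with $\delta_k<\eps_k$, and build nested finite unions of boxes $\mathcal{B}_1\supseteq\mathcal{B}_2\supseteq\cdots$ as follows. Given $B\in\mathcal{B}_k$: choose inside $B$ as many pairwise well-separated rationals $\bs p/q_{k+1}$ as the shape of $B$ allows, with $\Delta_{\bs w}(\bs p/q_{k+1},1)\subseteq B$; around each, keep only the part of $\Delta_{\bs w}(\bs p/q_{k+1},1)$ in the shell $\{\bs x:q_{k+1}^{\tau}\|q_{k+1}\bs x-\bs p\|_{\bs w}\in(1-\delta_k,1-\delta_k/2)\}$, so that all later refinements --- of diameter at most $\delta_k$ times the side-lengths of the new box --- keep $q_{k+1}^{\tau}\|q_{k+1}\bs x-\bs p\|_{\bs w}$ inside $(1-\delta_k,1)$; finally delete every resulting box meeting some $\Delta_{\bs w}(\bs p'/q,1-\eps_k)$ with $q_k\le q<q_{k+1}$ and $(\bs p',q)\ne(\bs p,q_{k+1})$. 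The key point is that the deleted region has density in $B$ tending to $0$: the obstructions with $q$ fine relative to the scale of $B$ are genuine small boxes of total volume $\lesssim|B|\sum_{q\ge q_k}q^{-\tau}\lesssim|B|\,q_k^{1-\tau}$, summable since $\tau>1$; the finitely many coarser obstructions (with $q$ comparable to the scale of $B$) are slabs whose total width in direction $i$ is $\lesssim\sum_{q\ge q_k}q^{-\tau w_i-1}\lesssim q_k^{-\tau w_i}$, negligible against the side-length of $B$ in direction $i$ once the preceding scale is large. Tuning $q_k,\delta_k,\eps_k$ so that only a small proportion of candidate boxes is lost at each level, $\mathcal{K}:=\bigcap_k\bigcup_{B\in\mathcal{B}_k}B$ is a nonempty compact subset of $E_{\bs w}(\tau)$.

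The dimension count is then the mass distribution principle applied to the uniform Cantor measure on $\mathcal{K}$: since $q_k$ grows so fast, the factors $\delta_k$, the separation between chosen rationals, and the deletion of over-approximated boxes contribute negligibly to the local-dimension exponent, so the exponent obtained is exactly the one minimised in Theorem~\ref{thm:Rynne}; letting $s\uparrow\dimH W_{\bs w}(\tau)$ finishes the proof. The step I expect to be the real obstacle is the dimension-preserving control of the intermediate denominators in the anisotropic setting: when $\tau$ is close to $1$ and some $w_i$ is small, the coarse obstructions near the current scale are numerous and a crude volume bound fails, so one must separate them by direction and use the convergence of $\sum_q q^{-\tau w_i-1}$, while at the same time verifying that the surviving Cantor set is dense enough to recover Rynne's minimum over $k$ --- that is, that the rigid/free coordinate split is implemented correctly in the weighted geometry. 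Balancing ``dense enough to realise $\dimH W_{\bs w}(\tau)$'' against ``sparse enough to avoid every intermediate approximation'', uniformly across scales and in the weights, is where the bulk of the work lies.
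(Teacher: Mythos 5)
Your overall strategy---reduce to the lower bound, build a nested family of boxes that keeps $q_k^{\tau}\|q_k\bs x-\bs p_k\|_{\bs w}$ in a thin shell just below $1$ while deleting everything approximated better than $(1-\eps_k)q^{-\tau}$ by intermediate denominators, then apply the mass distribution principle---is the same as the paper's. But there is a genuine gap at exactly the step you flag as ``the real obstacle,'' and it is not closable by the volume and slab estimates you propose. For a Hausdorff-dimension lower bound it is not enough that the union of the forbidden boxes $\Delta_{\bs w}(\bs p/q,1-\eps_k)$ over $q_k\le q<q_{k+1}$ has small total measure: you must show that inside \emph{each individual} surviving box $B$ only a small proportion of its children is destroyed, uniformly over $B$. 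Your bound ``$\lesssim|B|\sum_{q\ge q_k}q^{-\tau}$'' presupposes that rationals with denominator $q$ equidistribute at the scale of $B$, which fails precisely in the dangerous range where $q$ is comparable to the scales of $B$; a priori a single $B$ could be entirely covered by such boxes, and then the Cantor set dies. The paper's substitute is the weighted simplex lemma (Lemma~\ref{lem:hyp}): for a box $E$ of the correct dimensions, all rationals with $s\in[R^{n},R^{n+1})$ dangerous for $E$ lie on a single affine hyperplane, so one removes a neighborhood of one hyperplane per range, killing only an $O(R^{-(1+\tilde w_1)})$ fraction of children; and uniqueness of the approximant $\bs p_k/q_k$ over the range $[R^{n_k},R^{n_k^{(1)}})$ is forced by a second-minimum argument via Theorem~\ref{thm:mink}, not by a deletion step. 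Neither mechanism appears in your sketch.

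A second missing ingredient is the anisotropic bookkeeping that makes the subdivision self-similar. The shell around $\bs p_k/q_k$ has side lengths $\asymp q_k^{-(1+\tau w_i)}$, whose exponents sum to $d+\tau$ rather than $d+1$, so one cannot simply iterate a fixed subdivision scheme on these boxes; the paper follows Rynne and introduces the auxiliary weight $\tilde{\bs w}$ of Lemma~\ref{lem:Rynne} (with $\sum_i\tilde w_i=1$ and $\tilde w_i\approx\tau w_i$) and runs the entire construction in $\tilde{\bs w}$-shaped boxes. The transition between the two shapes (the ranges $n_k<\ell\le n_k^{(d)}$ and $n_k^{(d)}<\ell\le\xi n_k^{(d)}$, Cases 2--4 of Section~\ref{sec:3.2}) is where the boxes are too elongated in some directions for the simplex lemma with weight $\bs w$ to apply directly, and an additional family $\mathcal A_k$ of sub-boxes must be tracked and removed over several generations to secure Property \eqref{P3}. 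Your ``rigid/free block'' split gestures at the index realizing Rynne's minimum but does not supply this mechanism, and without it the construction neither stays inside $E_{\bs w}(\tau)$ nor yields the exponent of Theorem~\ref{thm:Rynne}.
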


Since $E_{\bs w}(\tau)\subseteq W_{\bs w}(\tau)$, it is obvious that $\dimH E_{\bs w}(\tau)\leq \dimH W_{\bs w}(\tau)$. Hence, the main difficulty is to prove that $\dimH E_{\bs w}(\tau)\geq \dimH W_{\bs w}(\tau)$. In order to prove this inequality, we shall construct a Cantor set denoted by $E_{\infty}$ contained in $E_{\bs w}(\tau)$ whose Hausdorff dimension is approximately equal to that of $W_{\bs w}(\tau)$.\\

Compared to \cite{zbMATH08109711}, treating arbitrary weights is more demanding and requires two additional ideas. The first one, already appearing in \cite{Ryn98}, is to introduce an auxiliary weight $\tilde{\bs w}$ whose components are approximately equal to $\tau w_i$ and to set up the construction of $E_{\infty}$ by using this new weight vector. 
Second, building on the Cantor-type framework introduced in \cite{zbMATH08109711}, we refine the construction of $E_{\infty}$ by splitting the argument into four regimes. In the equal-weight setting only the first two regimes occur, whereas the genuinely weighted situation forces the appearance of the remaining two (Cases $3$ and $4$ in Section \ref{sec:3.2}). These latter cases are technically more involved, since they require removing an additional family of points to compensate for the extra distortion created by very small weights.

% The second and main novelty of our method concerns the procedure to construct the Cantor set $E_{\infty}$. We consider four different cases in the construction. Only the first two appear in the case of equal weights and, unsurprisingly, the latter two cases (cases $3$ and $4$ in Section \ref{sec:3.2}) are more technically involved, as one is forced to remove an additional set of points. This stems from the extra distortion that very small weights introduce.

\vspace{2mm}

\textbf{Acknowledgements}:
We are grateful to Ralf Sptazier for his support and encouragement and to Nicolas De Saxc\'e for helpful discussions on the exact approximation problem. 

\vspace{2mm}

\textbf{Notation}: Throughout this article, all boxes in $\R^d$
 are assumed to be axis–parallel. By a box of side length $l_i$, we mean that for $i=1,\dotsc,d$ the side of the box in the $i$-th coordinate direction has length $l_i$.

\section{Auxiliary Results}
\noindent
In this section, we collect several lemmas and theorems required for the proof of Theorem \ref{thm:mainthm}.\\

\noindent
Let $K\subset\mathbb{R}^n$ be a centrally symmetric convex body with nonempty interior, and let $\Lambda\subset\mathbb{R}^n$ be a full-rank lattice. For $1\le i\le n$, the $i$-th successive minimum of $K$ with respect to $\Lambda$ is
\[
\lambda_i(K,\Lambda)
:= \inf\bigl\{ \lambda>0 : \lambda K \cap \Lambda \text{ contains at least $i$ linearly independent vectors}\bigr\}.
\]

\noindent
The following is a classical result (see \cite[Chapter VIII.1, (12) and (13)]{Cas71}).
\begin{theorem}[Minkowski’s Second Theorem on Successive Minima]\label{thm:mink}
For any full-rank lattice $\Lambda\subset \mb R^n$ it holds that
\begin{equation*}
\frac{2^n}{n!}\,\det(\Lambda)
\;\;\le\;\;
\operatorname{vol}(K)\,\prod_{i=1}^n \lambda_i(K,\Lambda)
\;\;\le\;\;
2^n\,\det(\Lambda).
\end{equation*}
In particular, if $\Lambda$ is unimodular (i.e., $\det(\Lambda)=1$), then
\begin{equation}\label{eq:mink}
\frac{2^n}{n!}
\;\;\le\;\;
\operatorname{vol}(K)\,\prod_{i=1}^n \lambda_i(K,\Lambda)
\;\;\le\;\;
2^n .
\end{equation}
\end{theorem}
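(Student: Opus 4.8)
The plan is to reduce to the unimodular case $\Lambda=\Z^n$, where it suffices to prove \eqref{eq:mink}, and then treat the two inequalities separately. For the reduction, observe that for any $A\in\GL_n(\R)$ one has $\lambda_i(AK,A\Lambda)=\lambda_i(K,\Lambda)$, $\vol(AK)=\lvert\det A\rvert\,\vol(K)$ and $\det(A\Lambda)=\lvert\det A\rvert\,\det\Lambda$, so the quantity $\vol(K)\prod_{i=1}^n\lambda_i(K,\Lambda)/\det\Lambda$ is unchanged upon applying $A$ to both $K$ and $\Lambda$; choose $A$ carrying a $\Z$-basis of $\Lambda$ to the standard basis. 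Passing to the closure of $K$ affects neither side, so we may also assume $K$ compact, and then the infima defining the $\lambda_i$ are attained: there are linearly independent $g_1,\dots,g_n\in\Z^n$ with $g_i\in\lambda_iK$. The lower bound is then immediate: the points $w_i:=\lambda_i^{-1}g_i$ lie in $K$, so by central symmetry and convexity $\operatorname{conv}\{\pm w_1,\dots,\pm w_n\}=\{\sum_it_iw_i:\sum_i\lvert t_i\rvert\le1\}\subseteq K$, a generalized octahedron of volume $\tfrac{2^n}{n!}\lvert\det(w_1,\dots,w_n)\rvert=\tfrac{2^n}{n!}\,\lvert\det(g_1,\dots,g_n)\rvert/(\lambda_1\cdots\lambda_n)$; since $g_1,\dots,g_n$ span a finite-index subgroup of $\Z^n$, the integer $\lvert\det(g_1,\dots,g_n)\rvert$ is $\ge1$, whence $\vol(K)\,\lambda_1\cdots\lambda_n\ge\tfrac{2^n}{n!}$.

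The upper bound is the substantial direction --- note that Minkowski's first theorem alone yields only the much weaker $\lambda_1^n\,\vol(K)\le2^n$ --- and I would prove it by induction on $n$, the case $n=1$ being a one-line computation. For the inductive step, a unimodular change of coordinates lets us assume $g_1=e_n$ is a primitive lattice vector with $e_n/\lambda_1\in K$. Writing points of $\R^n$ as $(y,t)$ with $y\in\R^{n-1}$, I would pass to a suitable $(n-1)$-dimensional convex body associated to $K$ --- the central section $K_0:=\{y:(y,0)\in K\}$, say, with slices $K_t:=\{y:(y,t)\in K\}$. By the Brunn--Minkowski inequality the function $t\mapsto\vol_{n-1}(K_t)^{1/(n-1)}$ is concave and, by symmetry, even, so $K_0$ has the largest slice volume; slicing then bounds $\vol_n(K)$ in terms of $\vol_{n-1}(K_0)$ and the extent of $K$ in the $e_n$-direction, which is $\ge1/\lambda_1$ because $e_n/\lambda_1\in K$. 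Combining this volume comparison with an inequality relating the higher successive minima of $(K,\Z^n)$ to those of $(K_0,\Z^{n-1})$, and feeding in the inductive hypothesis $\vol_{n-1}(K_0)\prod_{i=1}^{n-1}\lambda_i(K_0,\Z^{n-1})\le2^{n-1}$, should close the induction.

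\textbf{The main obstacle} is that the minimal vectors $g_1,\dots,g_n$ need not form a $\Z$-basis of $\Z^n$ --- this already fails for $n=5$ --- so one cannot simply diagonalise the configuration. The induction has to be organised so that the only basis-type input is the primitivity of the single shortest vector $g_1$, and the comparison between the successive minima of $K$ and of its section must be handled with care: a vector of $\Z^{n-1}$ realising a minimum of $K_0$ has to be produced from vectors of $\Z^n$ by subtracting integer multiples of $e_n=g_1$, the cost of each correction being precisely what $e_n\in\lambda_1K$ controls. This bookkeeping --- equivalently, the exact behaviour of volumes and successive minima under projection along a shortest vector --- is the technical heart of the matter; the classical volume arguments of Weyl and Davenport, and Mahler's theory of compound convex bodies, dispose of the non-basis phenomenon in essentially the same spirit. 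For complete details see \cite[Chapter~VIII]{Cas71}.
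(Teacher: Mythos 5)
The paper does not prove this theorem at all: it is quoted verbatim as a classical result with a pointer to \cite[Chapter VIII.1, (12) and (13)]{Cas71}, so there is no in-paper argument to compare against. Your reduction to $\Lambda=\Z^n$ and your lower-bound argument (the cross-polytope $\operatorname{conv}\{\pm g_i/\lambda_i\}\subseteq K$ of volume $\tfrac{2^n}{n!}\lvert\det(g_i)\rvert/(\lambda_1\cdots\lambda_n)$ with $\lvert\det(g_i)\rvert\ge 1$) are complete and correct, and they already exceed what the paper supplies. For the upper bound you give only an outline --- induction on $n$, Brunn--Minkowski for the slices $K_t$, and a comparison of the successive minima of $K$ with those of a section/projection along a shortest primitive vector --- and you correctly identify the genuine difficulty (the minimal vectors need not form a basis of $\Z^n$, so the induction must be organized around the primitivity of $g_1$ alone) before deferring to Cassels for the details. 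That is an honest and accurate account of the standard proof rather than a self-contained one, but since the paper itself relies entirely on the citation, your treatment is, if anything, more informative than the source text.
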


\noindent
The following lemma is adapted from Lemma~4 of \cite{zbMATH08109711}.
\begin{lemma}\label{lem:pk/qk}
     Fix $\varepsilon>0$, $M>1$ and a weight vector $\bs u\in (0,1)^d$. Let $\bs x\in \R^d$ be such that 
      \begin{equation}\label{eq:bad}
          \|s\bs x-\bs r\|_{\bs u}\geq \frac{\varepsilon}{s} \; \text{ for every } \frac{\bs r}{s}\in \Q^d \text{ with }\; 1\leq s< M
      \end{equation}
     Then for every $\beta>\varepsilon^{-1}$ there exists $\bs p/q\in \Q^d$ such that\vspace{2mm} 
     \begin{enumerate}
         \item $M\leq q\leq M\beta$;\\
         \item $\left|x_i-\dfrac{p_i}{q}\right|\leq \dfrac{1}{M^{1+u_i}\beta^{u_i}} \quad \mbox{for }i=1,\dotsc,d$. 
         % (\textcolor{blue}{ can replace $\frac{1}{M^{1+u_i}\beta^{u_i}}$ with $\frac{1}{qM^{u_i}\beta^{u_i}}$})
     \end{enumerate}
\end{lemma}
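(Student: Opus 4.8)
The plan is to produce the rational point $\bs p/q$ via a weighted Minkowski (equivalently, weighted Dirichlet) argument, and then to use hypothesis \eqref{eq:bad} to force its denominator $q$ to be large. First I would work in $\R^{d+1}$ with the unimodular lattice $\Lambda := \{(\bs p - q\bs x,\, q) : (\bs p, q)\in\Z^d\times\Z\}$ (unimodular, being a unipotent image of $\Z^{d+1}$) and with the symmetric box $K$ of side lengths $2(M\beta)^{-u_i}$ in the first $d$ coordinate directions and $2M\beta$ in the last. Since $\sum_{i=1}^d u_i = 1$, one has $\vol(K) = 2^{d+1}(M\beta)^{-\sum u_i}\cdot(M\beta) = 2^{d+1}$. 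By Theorem~\ref{thm:mink}, the product of the successive minima of $K$ with respect to $\Lambda$ is then at most $1$, so $\lambda_1(K,\Lambda)\le 1$ (using $\lambda_1\le\dots\le\lambda_{d+1}$) and $K$ contains a nonzero point of $\Lambda$. Unwinding, this gives $(\bs p,q)\in(\Z^d\times\Z)\setminus\{0\}$ with
\[
|q|\le M\beta \qquad\text{and}\qquad |qx_i-p_i|\le (M\beta)^{-u_i}\quad\text{for }i=1,\dots,d .
\]
Applying \eqref{eq:bad} at $s=1$ (legitimate since $M>1$) and minimizing its left-hand side over $\bs r\in\Z^d$ shows $\varepsilon\le 1/2<1$, so $\beta>\varepsilon^{-1}>1$ and $M\beta>1$; hence $q=0$ is impossible (it would force $\bs p=0$), and after replacing $(\bs p,q)$ by its negative if needed I may take $q\ge 1$.

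Then I would show $q\ge M$. Suppose not, so $1\le q<M$. Then \eqref{eq:bad} applies with $s=q$ and $\bs r=\bs p$, giving $\|q\bs x-\bs p\|_{\bs u}\ge \varepsilon/q$, so some coordinate $j$ satisfies $|qx_j-p_j|\ge(\varepsilon/q)^{u_j}$. Comparing with the upper bound $|qx_j-p_j|\le(M\beta)^{-u_j}$ from the first step and taking $u_j$-th roots yields $\varepsilon/q\le(M\beta)^{-1}$, i.e.\ $q\ge\varepsilon M\beta$. But $\beta>\varepsilon^{-1}$ gives $\varepsilon\beta>1$, hence $q\ge\varepsilon M\beta>M$, contradicting $q<M$. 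Therefore $q\ge M$, which combined with $q\le M\beta$ is exactly assertion (1).

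It then remains to read off (2): for each $i$,
\[
\left|x_i-\frac{p_i}{q}\right|=\frac{|qx_i-p_i|}{q}\le\frac{(M\beta)^{-u_i}}{M}=\frac{1}{M^{1+u_i}\beta^{u_i}},
\]
using $q\ge M$ in the middle inequality. I do not expect a genuine conceptual obstacle here: the argument is the weighted analogue of the classical fact that a vector badly approximable up to height $M$ has its next good rational approximation of denominator at least $M$, and the "forcing" is the one-line comparison in the second paragraph. The points that need a little care are bookkeeping ones — choosing the sides of $K$ so that $\vol(K)$ is exactly $2^{d+1}$ (this is precisely where $\sum u_i=1$ is used), excluding $q=0$ and normalizing to $q\ge 1$ (so that \eqref{eq:bad} may be invoked at $s=q$ whenever $q<M$), and keeping the weighted exponents straight when passing between $\|\cdot\|_{\bs u}$ and the coordinatewise bounds.
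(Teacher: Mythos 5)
Your proposal is correct and follows essentially the same route as the paper: a weighted Minkowski/Dirichlet argument produces $\bs p/q$ with $q\le M\beta$ and $|qx_i-p_i|\le (M\beta)^{-u_i}$, and hypothesis \eqref{eq:bad} applied at $s=q$ then forces $q\ge M$ exactly as in the paper (your $q\ge\varepsilon M\beta>M$ is the contrapositive form of the paper's contradiction). The only difference is presentational: you unfold the appeal to Minkowski's first theorem into the explicit lattice-and-box computation and carefully exclude $q=0$, details the paper leaves implicit.
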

 \begin{proof}
     Let $Q=M\beta$. By Minkowski's first theorem, there exists $\frac{\bs p}{q}\in \Q^d$ with $1\leq q\leq Q=M\beta$ such that
     \begin{equation}\label{w-dir}
       \left|x_i-\frac{p_i}{q}\right|\leq \frac{1}{q Q^{u_i}}=\frac{1}{qM^{u_i}\beta^{u_i}} \quad \mbox{for }i=1,\dotsc,d.  
     \end{equation}
     If $q<M$, then by \eqref{eq:bad} we have 
     $$\|q\bs x-\bs p\|_{\bs u} \geq \frac{\varepsilon}{q}>\frac{\varepsilon}{M}$$
     which implies that for some $i=1,\dotsc,d$
     $$\left|x_i-\frac{p_i}{q}\right|> \dfrac{\varepsilon^{u_i}}{qM^{u_i}}>\dfrac{1}{qM^{u_i}\beta^{u_i}},$$
      in contradiction to \eqref{w-dir}. Hence, we conclude that $q\geq M$, thus proving the first part. Using $q\geq M$ in \eqref{w-dir}, we obtain the second part of the proposition. 
     \end{proof}

The following lemma is a variant of Simplex Lemma originating in the works of Davenport and Schmidt \cite[page~57]{Schmidt}.
\begin{lemma}\label{lem:hyp}
   Fix parameters $R>1$, $n\in \N$ and a weight vector $\bs u$. Let $E$ be a box whose side length is 
   $\leq \rho_0^{(i)}R^{-(1+u_i)n} \text{ where } \rho_0^{(i)}=\frac{R^{-(1+u_i)}}{(d+1)!}$. Then for every $0<\varepsilon < R^{-1}((d+1)!)^{-1/u_1}$, the set
    $$S(E,\varepsilon):=\left\{\frac{\bs r}{s}\in \Q^{d}: R^n\leq s<R^{n+1} \text{ and } \|s\bs x-\bs r\|_{\bs u}\leq \frac{\varepsilon}{s} \text{ for some } \bs x\in E\right\}$$ lies on an affine plane in $\R^{d}$.
\end{lemma}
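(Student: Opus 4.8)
The plan is to adapt the classical Davenport--Schmidt simplex argument to the weighted norm $\|\cdot\|_{\bs u}$. Suppose, towards a contradiction, that $S(E,\varepsilon)$ is not contained in any affine hyperplane of $\R^{d}$. Then it contains $d+1$ affinely independent points $\bs r_0/s_0,\dots,\bs r_d/s_d$; for each $j$ we have $R^{n}\le s_j<R^{n+1}$, and we fix a witness $\bs x_j\in E$ with $\|s_j\bs x_j-\bs r_j\|_{\bs u}\le\varepsilon/s_j$. Form the $(d+1)\times(d+1)$ integer matrix $B$ whose $j$-th row is $(s_j,\bs r_j)$. Dividing the $j$-th row by $s_j$ and then subtracting the $j=0$ row from the rows with $j\ge 1$ shows that $|\det B|=\bigl(\prod_{j=0}^{d}s_j\bigr)\,d!\,\vol(\triangle)$, where $\triangle$ is the $d$-simplex with vertices $\bs r_0/s_0,\dots,\bs r_d/s_d$. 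Affine independence makes $\triangle$ non-degenerate, so $\det B$ is a nonzero integer and hence $|\det B|\ge 1$.

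The second step is to bound $\vol(\triangle)$ from above by confining all $d+1$ vertices to one small box. From $\|s_j\bs x_j-\bs r_j\|_{\bs u}\le\varepsilon/s_j$ one obtains $|x_{j,i}-r_{j,i}/s_j|\le\varepsilon^{u_i}s_j^{-(1+u_i)}\le\varepsilon^{u_i}R^{-(1+u_i)n}$ in each coordinate $i$, using $s_j\ge R^{n}$. Since all the $\bs x_j$ lie in $E$, whose side in direction $i$ has length $\le\rho_0^{(i)}R^{-(1+u_i)n}$, the points $\bs r_j/s_j$ lie in a common box $E'$ whose side in direction $i$ has length $\le\bigl(\rho_0^{(i)}+2\varepsilon^{u_i}\bigr)R^{-(1+u_i)n}$. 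The hypothesis $\varepsilon<R^{-1}((d+1)!)^{-1/u_1}$ together with the ordering $u_1\le u_i$ gives $\varepsilon^{u_i}<R^{-u_i}/(d+1)!$, so this side length is $\le c_i R^{-(1+u_i)n}$ for an explicit constant $c_i$. Writing $\vol(\triangle)=\tfrac{1}{d!}\bigl|\det(\bs r_j/s_j-\bs r_0/s_0)_{j=1}^{d}\bigr|$ and expanding the determinant as a signed sum over the $d!$ permutations bounds it by $\prod_{i=1}^{d}c_iR^{-(1+u_i)n}$; since $\sum_{i=1}^{d}(1+u_i)=d+1$ by the normalization $\sum_i u_i=1$, this is $\le\bigl(\prod_i c_i\bigr)R^{-(d+1)n}$.

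Combining the two estimates with $\prod_{j=0}^{d}s_j<R^{(n+1)(d+1)}$ yields $1\le|\det B|<d!\,\bigl(\prod_i c_i\bigr)R^{d+1}$. The crux of the proof is to check that the quantities built into the statement --- the factor $(d+1)!$ in $\rho_0^{(i)}$ and the threshold on $\varepsilon$ --- are calibrated precisely so that the right-hand side is strictly less than $1$, which is the desired contradiction. Hence $S(E,\varepsilon)$ lies on an affine hyperplane.

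I expect the main obstacle to be exactly this final constant check: one must, simultaneously in all $d$ coordinate directions, control the enlargement of $E$ by the Diophantine error $\varepsilon^{u_i}R^{-(1+u_i)n}$ against the admissible side $\rho_0^{(i)}R^{-(1+u_i)n}$, and then confirm that the volume of the enlarged box still undercuts the arithmetic lower bound $1/(d!\prod_j s_j)$. The normalization $\sum_i u_i=1$ is what makes the powers of $R$ telescope in the combined inequality, while the ordering $u_1\le\dots\le u_d$ is what lets the single scalar bound on $\varepsilon$ dominate every $\varepsilon^{u_i}$ at once; both features are needed for the argument to close uniformly over all weight vectors.
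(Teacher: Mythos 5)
Your overall strategy (affinely independent points, the integer determinant $\det B$, a volume upper bound) is the right family of argument, but the step you yourself flag as ``the main obstacle'' --- the final constant check --- does not close, and this is a genuine gap rather than a routine verification. Concretely: the admissible displacement of a point $\bs r_j/s_j\in S(E,\varepsilon)$ from its witness $\bs x_j\in E$ in coordinate $i$ is $\varepsilon^{u_i}s_j^{-(1+u_i)}\le \varepsilon^{u_i}R^{-(1+u_i)n}$, and with the threshold $\varepsilon<R^{-1}((d+1)!)^{-1/u_1}$ one only gets $\varepsilon^{u_i}<R^{-u_i}/(d+1)!$, which is about $R$ times \emph{larger} than $\rho_0^{(i)}=R^{-(1+u_i)}/(d+1)!$. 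So your enlarged box $E'$ has $c_i\le 3R^{-u_i}/(d+1)!$, hence $\prod_i c_i\le 3^dR^{-1}/((d+1)!)^d$, and the combined inequality reads
\[
1\le|\det B|<d!\,\Bigl(\prod_i c_i\Bigr)R^{(n+1)(d+1)-(d+1)n}\le \frac{3^d\,d!}{((d+1)!)^d}\,R^{d},
\]
which is no contradiction once $R$ is moderately large (and the lemma is applied in the paper precisely for large $R$). The exponent bookkeeping is $(n+1)(d+1)$ from $\prod_j s_j$ against $-(d+1)n-1$ from $\prod_i c_iR^{-(1+u_i)n}$, leaving a stray $R^{d}$; no improvement of the absolute constants repairs this.

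The loss comes from decoupling the two estimates $\vol(\triangle)\le\prod_i c_iR^{-(1+u_i)n}$ and $\prod_j s_j<R^{(n+1)(d+1)}$: a vertex $\bs r_j/s_j$ can only stray the full distance $\varepsilon^{u_i}s_j^{-(1+u_i)}$ from $E$ when $s_j$ is small, so the configurations that make the simplex large are incompatible with all denominators being close to $R^{n+1}$, and your bounds cannot see this correlation. The repair is to estimate $|\det B|$ through the matrix with rows $(\bs r_j-s_j\bs y,\;s_j)$, where $\bs y$ is the center of $E$: then $|r_{j,i}-s_jy_i|=s_j|r_{j,i}/s_j-y_i|\le \varepsilon^{u_i}s_j^{-u_i}+s_j\rho_0^{(i)}R^{-(1+u_i)n}\le 2R^{-(n+1)u_i}/(d+1)!$, i.e.\ each row absorbs exactly one factor of $s_j$, and a Hadamard-type expansion gives $|\det B|\le (d+1)!\cdot R^{n+1}\prod_i 2R^{-(n+1)u_i}/(d+1)!=2^d((d+1)!)^{1-d}<1$ for $d\ge 2$, using $\sum_iu_i=1$. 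This is exactly what the paper does, phrased via the unimodular lattice $\Lambda_{\bs y}$ and the lower bound in Minkowski's second theorem on successive minima.
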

\begin{proof}
    Let $\bs y\in \R^d$ be the center of the box $E$. 
    Define $$u_{\bs y}:=\begin{pmatrix}
    I_{d \times d} & \bs y_{d \times 1} \\[6pt]
    1              & 0
\end{pmatrix}
$$
    Denote by $\Lambda_{\bs y}$, the unimodular lattice $u_{\bs y}\Z^{d+1}$ in $\R^{d+1}$.
Let
$$S'(E,\varepsilon):=\left\{(\bs r,s)\in \Z^{d+1}: R^n\leq s<R^{n+1} \text{ and } \|s\bs x-\bs r\|_{\bs u}\leq \frac{\varepsilon}{s} \text{ for some } \bs x\in E\right\}$$
Suppose $(\bs r,s)\in S'(E,\varepsilon) $. Then
\begin{align*}
    |sy_i-r_i|& \leq  s|y_i-x_i|+|sx_i-r_i|\\
    &\leq R^{n+1} \rho_0^{(i)}R^{-(1+u_i)n}+ \frac{\varepsilon^{u_i}}{R^{nu_i}}=R^{-nu_i}(R\rho_0^{(i)}+\varepsilon^{u_i})\leq \frac{2\cdot R^{-(n+1)u_i}}{(d+1)!}
\end{align*}
The last inequality above follows since $\varepsilon < \frac{R^{-1}}{((d+1)!)^{1/u_1}}$ and $\rho_0^{(i)}=\frac{R^{-(1+u_i)}}{(d+1)!}$.\\
Hence, $u_{\bs y} \cdot S'(E,\varepsilon)\subseteq K \cap \Lambda_{\bs y}$ where
$$K:= \prod_{i=1}^d \left[-\frac{2 \cdot R^{-(n+1)u_i}}{(d+1)!}, \frac{2\cdot R^{-(n+1)u_i}}{(d+1)!}\right] \times [-R^{(n+1)},R^{(n+1)}].$$
Now, note that $$\vol K < \frac{2^{d+1}}{(d+1)!}.$$
Hence, by the lower bound in  Theorem \ref{thm:mink} $u_{\bs y} \cdot S'(E,\varepsilon)$ cannot contain $d+1$ linearly independent vectors. Therefore, $S'(E,\varepsilon)$ is contained in a hyperplane in $\R^{d+1}$ which implies that $S(E,\varepsilon)$ is contained in an affine plane in $\R^d$. 
\end{proof}
\noindent
The following lemma is inspired by \cite[Lemma 1]{Ryn98}.

\begin{lemma}
\label{lem:Rynne}
For every weight vector $\bs w$ and every $\tau>1$ there exist an integer $K=K(\bs w,\tau )$, $0\leq K<d$, and a real number $\delta_0(\bs w, \tau)>0$ such that for any $0<\delta\leq\delta_0$ there exist vector $\tilde{\bs w}\in\mb (0,+\infty)^d$ with the following properties:\vspace{2mm}
\begin{enumerate}[label={($\tau$\arabic*)}, ref={$\tau$\arabic*}]
  \item \label{tau1}
    $\tilde{w}_i=\tau w_i-\delta\,(1+\tau w_i)$ for $i=1,\dotsc,K$;\vspace{2mm}
  \item \label{tau2}
    $\tilde w_{K}\le \tilde{w}_{K+1}=\dotsb=\tilde w_{d}
    < \tau w_{K+1}-\delta\,(1+\tau w_{K+1})$;\vspace{2mm}
    \item \label{tau3} $\sum_{i=1}^d\tilde w_i=1$.\vspace{2mm}
\end{enumerate}
In particular, $\tilde w_1\leq\dotsb \leq \tilde w_d$.
\end{lemma}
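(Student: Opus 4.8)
The plan is to build $\tilde{\bs w}$ by ``capping'' the rescaled weights $\tau w_i$ at a common level $t$ chosen so that the total equals $1$, and — crucially — to read off $K$ from the limiting ($\delta\to 0$) configuration, so that it depends only on $\bs w$ and $\tau$. This is essentially the construction behind \cite[Lemma~1]{Ryn98}, with the dependence on $\delta$ tracked explicitly.

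First I would set $\bar v_i:=\tau w_i$, so that $0<\bar v_1\le\dots\le\bar v_d$ and $\sum_{i=1}^d\bar v_i=\tau>1$. Consider $g(t):=\sum_{i=1}^d\min(\bar v_i,t)$ on $[0,\infty)$: it is continuous and strictly increasing on $[0,\bar v_d]$ (on any subinterval not containing some $\bar v_i$ its slope is $\#\{i:\bar v_i>t\}\ge 1$), with $g(0)=0$ and $g(\bar v_d)=\tau>1$. Hence there is a unique $\bar t\in(0,\bar v_d)$ with $g(\bar t)=1$. Define
$$K:=\#\{i\in\{1,\dots,d\}:\bar v_i\le\bar t\}.$$
Since $\bar v$ is non-decreasing, $\{i:\bar v_i\le\bar t\}=\{1,\dots,K\}$, so $\bar v_i\le\bar t$ for $i\le K$ and $\bar v_i>\bar t$ for $i>K$; in particular $\bar v_K\le\bar t<\bar v_{K+1}$ (the left inequality being vacuous when $K=0$), and $K\le d-1$ because $\bar v_d>\bar t$. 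Writing $A:=\sum_{i=1}^K\bar v_i$, the identity $g(\bar t)=1$ reads $A+(d-K)\bar t=1$, i.e.\ $\bar t=(1-A)/(d-K)$.

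Next, for $\delta>0$ put $v_i(\delta):=\tau w_i-\delta(1+\tau w_i)=\bar v_i-\delta(1+\bar v_i)$ and $t(\delta):=\bigl(1-\sum_{i=1}^K v_i(\delta)\bigr)/(d-K)$. Using $\sum_{i=1}^K v_i(\delta)=A-\delta(K+A)$ together with $1-A=(d-K)\bar t$ yields the key identity $t(\delta)=\bar t+\delta(K+A)/(d-K)$, so in particular $t(\delta)>\bar t>0$. I then set $\tilde w_i:=v_i(\delta)$ for $1\le i\le K$ and $\tilde w_i:=t(\delta)$ for $K+1\le i\le d$. Property \ref{tau1} holds by definition and \ref{tau3} by the choice of $t(\delta)$. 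For \ref{tau2}: the equalities $\tilde w_{K+1}=\dots=\tilde w_d=t(\delta)$ are immediate; the inequality $\tilde w_K\le\tilde w_{K+1}$ (vacuous if $K=0$) follows from $t(\delta)-v_K(\delta)=(\bar t-\bar v_K)+\delta\bigl(\tfrac{K+A}{d-K}+1+\bar v_K\bigr)>0$; and $t(\delta)<v_{K+1}(\delta)$ follows from $v_{K+1}(\delta)-t(\delta)=(\bar v_{K+1}-\bar t)-\delta\bigl(\tfrac{K+A}{d-K}+1+\bar v_{K+1}\bigr)$, which is positive as soon as $\delta<\delta_1:=(\bar v_{K+1}-\bar t)\big/\bigl(\tfrac{K+A}{d-K}+1+\bar v_{K+1}\bigr)$. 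Positivity of $\tilde{\bs w}$ is automatic for $i>K$, and for $i\le K$ it follows from $\tilde w_i\ge v_1(\delta)=\bar v_1-\delta(1+\bar v_1)>0$ whenever $\delta<\delta_2:=\bar v_1/(1+\bar v_1)$. Finally, monotonicity of $(w_i)_i$ makes $(v_i(\delta))_{i\le K}$ non-decreasing, so $\tilde w_1\le\dots\le\tilde w_K<\tilde w_{K+1}=\dots=\tilde w_d$. Taking $\delta_0:=\tfrac12\min\{\delta_1,\delta_2\}>0$ (which depends only on $\bs w$ and $\tau$) finishes the argument.

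The step I expect to need the most care is the uniformity in $\delta$ — that a single $K$ works for all small $\delta$, and in particular that $\tilde w_K\le\tilde w_{K+1}$ survives even in the degenerate case $\bar v_K=\bar t$. This is precisely what the identity $t(\delta)=\bar t+\delta(K+A)/(d-K)$ delivers: it makes that borderline inequality \emph{strict} the moment $\delta>0$, while the complementary inequality $t(\delta)<v_{K+1}(\delta)$ is safe because $\bar v_{K+1}>\bar t$ holds strictly by the definition of $K$ and hence tolerates a small perturbation. Everything else is a routine continuity/convexity computation.
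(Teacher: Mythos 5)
Your proof is correct and follows essentially the same capping construction as the paper: set $\tilde w_i=\tau w_i-\delta(1+\tau w_i)$ below a threshold index $K$ and assign a common value above it, fixed by the normalization $\sum_{i}\tilde w_i=1$. The only real difference is organizational: you define $K$ from the $\delta=0$ configuration via the water-filling level $\bar t$, which makes the $\delta$-independence of $K$ automatic and the borderline case $\tau w_K=\bar t$ explicit, whereas the paper determines $K$ by a $\delta$-dependent criterion and only remarks at the end that it stabilizes for small $\delta_0$.
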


\begin{proof}
If $\tau w_1> 1/d$, we put $K=0$ and we choose $\delta_0$ so small that $\tau w_1-\delta_0\cdot (1+\tau w_1)\geq 1/d$. Then it is enough to set $\tilde w_i=1/d$ for $i=1,\dotsc,d$. Now, let us assume that $\tau w_1\leq 1/d$. Fix $\delta_0<(\tau-1)/(\tau +d)$ so small that
$$0<\tau w_i-\delta_0\cdot (1+\tau w_i)\leq \tau w_{i+1}-\delta_0\cdot (1+\tau w_{i+1})\quad\mbox{for all }i=1,\dotsc,d-1.$$
Then for fixed $0<\delta<\delta_0$ consider the expression
\begin{equation}
\label{eq:wtildedef}
\sum_{i=1}^h \tau w_i-\delta\cdot (1+\tau w_i) +(d-h)(\tau w_{h+1}-\delta\cdot (1+\tau w_{h+1})).    
\end{equation}
By our choice of $\delta_0$, this expression is non-decreasing in $h$, strictly less than $1$ for $h=0$, and strictly greater than $1$ for $h=d-1$. Choose $K$ to be the minimum integer $h$ between $1$ and $d-1$ such that \eqref{eq:wtildedef} is strictly greater than $1$. Then we have that
\begin{multline*}
\sum_{i=1}^{K}\tau w_i-\delta\cdot(1+\tau w_i)+(d-K)(\tau w_{K}-\delta\cdot (1+\tau w_K))\\
=\sum_{i=1}^{K-1}\tau w_i-\delta\cdot(1+\tau w_i)+(d-(K-1))(\tau w_{K}-\delta\cdot (1+\tau w_{K}))\leq 1\\
<\sum_{i=1}^{K}\tau w_i-\delta\cdot(1+\tau w_i)+(d-K)(\tau w_{K+1}-\delta\cdot(1+\tau w_{K+1})).    
\end{multline*}
Thus, there exists $\tau w_K-\delta\cdot(1+\tau w_K)=\tilde w_K\leq x <\tau w_{K+1}-\delta\cdot(1+\tau w_{K+1})$ such that
$$\sum_{i=1}^{K}\tau w_i-\delta\cdot(1+\tau w_i)+(d-K)\cdot x=1.$$
On choosing $\tilde w_{K+1}=\dotsb =\tilde w_d=x$ the proof is concluded. Note that, if $\delta_0$ is sufficiently small in terms of $\bs w$ and $\tau$, then $K$ is independent of $\delta>0$.
\end{proof}

\section{Construction of the Cantor set}

Our goal is to construct a Cantor set $E_{\infty}$ such that $E_{\infty}\subseteq E_w(\tau)$. This will require us to introduce several auxiliary constants, which we do in this section.

Let $\tau>1$, $d\geq 2$ and $\bs w\in (0,1)^d$ be a weight vector. Let $0\leq K<d$ and $\delta_0>0$ be as in Lemma \ref{lem:Rynne}. Fix $0<\delta< \min\{\delta_0,\tau w_1/(1+\tau w_1)^{-1}\}$ and let $\tilde{\bs w}\in (0,1)^d$ be the weight vector associated with $\delta$, $\tau$, and $\bs w$ coming from Lemma \ref{lem:Rynne}. 
Define 
$$\alpha:=\max\left\{\frac{\tau w_i}{\tilde{w}_i}: 1\leq i\leq d \right\}\geq 1,$$
% $$\gamma:=\frac{(\tau w_1-\tilde{w}_1)}{4}\left(1-\frac{1}{1+\tilde{w}_1}\right)>0,$$
and 
\[
\alpha' \coloneqq
\begin{cases}
\displaystyle \min\{\, w_k - w_1 : 1 \le k \le d,\; w_k \neq w_1 \,\}, 
& \text{if there exists } 1\leq k \leq d\text{ with } w_k \neq w_1, \\[6pt]
1, & \text{otherwise.}
\end{cases}
\]
% $$\alpha':=\min_{1\leq k\leq d: w_k\neq w_1}\{w_k-w_1\}>0 $$

\noindent
Let $R>(8(d+1)!)^{\frac{1}{1+\tilde{w}_1}}$. We define sequences $n_k \rightarrow \infty, \varepsilon_k \rightarrow 0$ and  $c_k\rightarrow 1$ inductively as follows. Let $\varepsilon_0:=R^{-2(1+\frac{1}{\tilde{w}_1})}(d+1)!^{-1/\tilde{w}_1}$. Choose $\frac{3}{4}<c_k<1$ such that $1-c_k>\varepsilon_{k-1}$ and choose $n_k\in \N$ such that 
\begin{equation}\label{eq:nk}
 n_k\geq -\max\left\{\frac{2\frac{1+\tau w_1}{1+\tilde{w}_1}+d+1}{\alpha'(\tau w_1-\tilde{w}_1)(1-\frac{1}{1+\tilde{w}_1})},4,\frac{2}{\tau-1} \right\}\log_R \varepsilon_{k-1}   
\end{equation}
% $$n_k\geq -\max\left\{\frac{2\frac{1+\tau w_1}{1+\tilde{w}_1}+d+1}{(\tau w_1-\tilde{w}_1)(1-\frac{1}{1+\tilde{w}_1})},2 \right\}\log_R \varepsilon_{k-1} $$
% \begin{equation}
% \textcolor{blue}{both works!}
% \label{eq:nk}
% n_k\geq -\max\left\{\frac{(1+\tau w_1)(d+3)}{(\tau w_1-\tilde{w}_1)\tilde{w}_1},2 \right\}\log_R \varepsilon_{k-1}  
% \end{equation}
for all $k\in\N$. Set $n_0^{(d)}=0$ and for $k\in \N$ and $i=1,\dotsc,d$ define
\begin{equation}\label{eq:nki}
    n_k^{(i)}:=\floor*{\frac{1+\tau w_i}{1+\tilde{w}_i}(n_k-2\log_{R}\varepsilon_{k-1})}+1. 
\end{equation}
% $$-\frac{2+\tau w_i}{(1+\tilde{w}_i)}\log_{R}\varepsilon_{k-1}<\frac{1+\tau w_i}{1+\tilde{w}_i}n_k$$
and
\begin{equation}
\label{eq:epsk}
\varepsilon_k:=2^{-1/(w_1\tilde w_1)}R^{-(\alpha (n_k^{(d)}+1)-n_k)} 
\end{equation}
Set also for $1\leq i\leq d$
$$\rho_0^{(i)}:=\frac{R^{-(1+\tilde{w}_i)}}{(d+1)!}\quad \text{and}\quad \rho_i:=R^{1+\tilde{w}_i}$$
Define
$$\rho:=\prod_{i=1}^d \floor*{R^{1+\tilde{w}_i}}$$

\subsection{Some technical lemmas}
In this subsection we prove several lemmas which will be useful in the sequel. These are largely technical and may be skipped on a first read.

\begin{lemma}
\label{lem:newlem1}
There exists a constant $\xi_0$ depending only on $\delta, \tau,\bs w,$ and $\tilde{\bs w}$, such that for every $\xi\geq \xi_0$ it holds that
$$\rho_0^{(1)}R^{-(1+\tilde w_1)\xi n_k^{(d)}}<R^{-(1+\tau w_d)(n_k^{(d)}+1)}$$
for every $k\in\mb N$.
\end{lemma}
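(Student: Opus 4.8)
The plan is to take logarithms base $R$ and reduce the statement to a linear inequality in $n_k^{(d)}$ that is easily seen to hold uniformly in $k$. Recalling that $\rho_0^{(1)} = R^{-(1+\tilde w_1)}/(d+1)!$ and dividing the desired inequality through by the (positive) quantity $R^{-(1+\tau w_d)(n_k^{(d)}+1)}$, the claim is equivalent to
\[
\frac{1}{(d+1)!}\, R^{-(1+\tilde w_1) \,-\, (1+\tilde w_1)\xi\, n_k^{(d)} \,+\, (1+\tau w_d)(n_k^{(d)}+1)} < 1 .
\]
Since $0 < \tfrac{1}{(d+1)!}\,R^{-(1+\tilde w_1)} < 1$ and $R>1$, it suffices to arrange that the remaining exponent is nonpositive, i.e. that
\[
(1+\tilde w_1)\,\xi\, n_k^{(d)} \;\ge\; (1+\tau w_d)\,(n_k^{(d)}+1) .
\]

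Next I would invoke the only feature of $n_k^{(d)}$ that is actually needed, namely $n_k^{(d)}\ge 1$ for every $k$. This is immediate from \eqref{eq:nki}: since $\varepsilon_{k-1}<1$ we have $-2\log_R\varepsilon_{k-1}>0$, so the argument of the floor in \eqref{eq:nki} is positive, the floor is $\ge 0$, and the trailing $+1$ gives $n_k^{(d)}\ge 1$. Consequently $n_k^{(d)}+1\le 2n_k^{(d)}$, and the displayed inequality is implied by $(1+\tilde w_1)\,\xi \ge 2(1+\tau w_d)$. Hence it suffices to set
\[
\xi_0 := \frac{2(1+\tau w_d)}{1+\tilde w_1},
\]
and for any $\xi\ge\xi_0$ the chain $(1+\tilde w_1)\xi\, n_k^{(d)} \ge 2(1+\tau w_d)\,n_k^{(d)} \ge (1+\tau w_d)(n_k^{(d)}+1)$ closes the argument. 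This $\xi_0$ depends only on $\tau$, $w_d$ and $\tilde w_1$, hence only on $\tau$, $\bs w$ and $\delta$ (the last through $\tilde{\bs w}$), exactly as asserted, and since the estimate $n_k^{(d)}\ge 1$ is uniform in $k$, the conclusion holds for all $k\in\N$.

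There is no genuine obstacle in this lemma: it is a routine comparison of powers of $R$, and the inequality has a large amount of slack (one throws away both the favourable factor $1/(d+1)!<1$ and the room between $n_k^{(d)}+1$ and $2n_k^{(d)}$). The only mild point of care is to keep every bound independent of $k$, which is automatic here because $n_k^{(d)}$ enters solely through the trivial lower bound $n_k^{(d)}\ge 1$.
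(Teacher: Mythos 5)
Your argument is correct and is essentially the paper's approach: the paper simply asserts the choice $\xi_0=(1+\tau w_d)/(1+\tilde w_1)+1$ without writing out the exponent comparison, while you choose the slightly larger constant $2(1+\tau w_d)/(1+\tilde w_1)$ and verify the inequality explicitly via $n_k^{(d)}\ge 1$. Both reduce to the same routine comparison of powers of $R$, so there is nothing further to add.
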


\begin{proof}
Take $\xi_0=(1+\tau w_d)/(1+\tilde w_1)+1$. 
\end{proof}

From now on, we will assume additionally that $\xi\geq \xi_0$ is fixed and that $n_{k+1}>\xi n_{k}^{(d)}$ for all $k\in\mb N$.

\begin{lemma}
\label{lem:lemma5}
Equations \eqref{eq:nk}, \eqref{eq:nki}, and \eqref{eq:epsk} imply that
$$n_k\leq n_k^{(1)}\leq n_k^{(2)}\leq n_k^{(3)}\leq \dots \leq n_k^{(d)} <{\xi} n_k^{(d)}<n_{k+1}$$
for every $k\in\mb N$. Moreover, the following conditions hold
% if
% $$n_k':=\floor*{(1+\tau w_1-\tilde{w}_1-\gamma)n_k},$$
% then $n_k'-n_k^{(1)}\geq \gamma n_k $ and 
\begin{equation}
\label{eq:wave}
R^{n_k^{(1)}}
%\leq R^{(1+\tau w_1-\tilde{w}_1-\gamma)n_k}
\leq R^{(1+\tau w_1-\tilde{w}_1)n_k}\varepsilon_{k-1}^d,    
\end{equation}
\begin{equation}
\label{eq:ckcond}
R^{-(1+\tilde{w}_i)n_{k}^{(i)}}\leq (1-c_k)\frac{\varepsilon_{k-1}^{1+\tau w_i}}{R^{n_k(1+\tau w_i)}}\quad\mbox{for }i=1,\dotsc,d,    
\end{equation}
\begin{equation}
\label{eq:nkdup}
n_k^{(d)}\leq 2\cdot \frac{1+\tau w_d}{1+\tilde w_d}n_k,    
\end{equation}
\begin{equation}\label{eq:volnpi}
  \max_i\left\{\frac{R^{-(1+\tau w_i)n}}{\rho_0^{(i)}R^{-(1+\tilde{w}_i)\max\{n,n_k^{(i)}\}}} \right\}=\frac{R^{-(1+\tau w_1)n}}{\rho_0^{(1)}R^{-(1+\tilde{w}_1)\max\{n,n_k^{(1)}\}}}\quad\mbox{for }n\geq n_{k}^{(1)}.
\end{equation}
\end{lemma}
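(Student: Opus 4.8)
The plan is to verify each assertion directly from the explicit formulas \eqref{eq:nk}, \eqref{eq:nki}, and \eqref{eq:epsk}, treating the logarithmic inequalities that govern the growth of $n_k$, $n_k^{(i)}$, and $\varepsilon_k$ as the primary bookkeeping tool. First I would establish the monotonicity chain $n_k\le n_k^{(1)}\le\dots\le n_k^{(d)}$. The inequality $n_k\le n_k^{(1)}$ follows because the coefficient $(1+\tau w_1)/(1+\tilde w_1)$ in \eqref{eq:nki} is $\ge 1$ (since $\tilde w_1\le\tau w_1$ by property \ref{tau1}--\ref{tau2} of Lemma \ref{lem:Rynne}), the quantity $-2\log_R\varepsilon_{k-1}>0$, and the floor-plus-one construction only loses less than a unit; one should check the condition \eqref{eq:nk} on $n_k$ is strong enough to absorb that loss. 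For $n_k^{(i)}\le n_k^{(i+1)}$ one compares the two coefficients $(1+\tau w_i)/(1+\tilde w_i)$: since $w_i\le w_{i+1}$ and $\tilde w_i\le\tilde w_{i+1}$ with $\tilde w_j\approx\tau w_j$ up to the $\delta$-distortion, the map $t\mapsto(1+\tau t)/(1+s(t))$ is monotone in the relevant range, giving the claim. The bound $n_k^{(d)}<\xi n_k^{(d)}$ is trivial since $\xi\ge\xi_0>1$, and $\xi n_k^{(d)}<n_{k+1}$ is exactly the standing assumption imposed right after Lemma \ref{lem:newlem1}.

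Next I would treat the four displayed estimates in turn. For \eqref{eq:wave}, I would substitute the definition \eqref{eq:nki} of $n_k^{(1)}$, use $R^{n_k^{(1)}}\le R\cdot R^{\frac{1+\tau w_1}{1+\tilde w_1}(n_k-2\log_R\varepsilon_{k-1})}$, and then bound the exponent; the factor $\varepsilon_{k-1}^{d}$ on the right comes from $-2\cdot\frac{1+\tau w_1}{1+\tilde w_1}\log_R\varepsilon_{k-1}\ge -d\log_R\varepsilon_{k-1}$ once $n_k$ is large enough, which is again guaranteed by \eqref{eq:nk} (the first term in the maximum there is engineered precisely for this). For \eqref{eq:ckcond}, rewrite the desired inequality in logarithmic form: one needs $(1+\tilde w_i)n_k^{(i)}\ge (1+\tau w_i)(n_k-\log_R\varepsilon_{k-1})-\log_R(1-c_k)$, and then plug in \eqref{eq:nki}, noting $(1+\tilde w_i)n_k^{(i)}\ge(1+\tau w_i)(n_k-2\log_R\varepsilon_{k-1})+(1+\tilde w_i)$; the slack $(1+\tau w_i)(-\log_R\varepsilon_{k-1})$ versus $\log_R(1-c_k)$ is handled by the standing choice $1-c_k>\varepsilon_{k-1}$, so $-\log_R(1-c_k)<-\log_R\varepsilon_{k-1}$. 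The bound \eqref{eq:nkdup} is the most immediate: from \eqref{eq:nki}, $n_k^{(d)}\le\frac{1+\tau w_d}{1+\tilde w_d}(n_k-2\log_R\varepsilon_{k-1})+1$, and since $-\log_R\varepsilon_{k-1}\le n_k$ (forced by \eqref{eq:nk}, e.g.\ via the term $4\cdot(-\log_R\varepsilon_{k-1})\le n_k$, so certainly $-2\log_R\varepsilon_{k-1}\le n_k/2$) and $1\le n_k$, one gets $n_k^{(d)}\le\frac{1+\tau w_d}{1+\tilde w_d}\cdot 2n_k$ after crudely combining terms.

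Finally, for \eqref{eq:volnpi} I would observe that the quantity inside the maximum is, after taking logarithms base $R$, equal to $-(1+\tau w_i)n+(1+\tilde w_i)\max\{n,n_k^{(i)}\}+\log_R(d+1)!+(1+\tilde w_i)$. Since we range over $n\ge n_k^{(1)}$ and we have already shown $n_k^{(i)}$ is nondecreasing in $i$ with $n_k^{(1)}\le n$ only guaranteed for $i=1$, I would split into the cases $n\ge n_k^{(i)}$ and $n<n_k^{(i)}$: in the first, the expression becomes $(\tilde w_i-\tau w_i)n+\text{const}$, which is largest for $i=1$ because $\tau w_i-\tilde w_i$ is smallest at $i=1$ (indeed $\tau w_i-\tilde w_i=\delta(1+\tau w_i)$ is increasing for $i\le K$, and $\tilde w_i$ is constant $=\tilde w_d$ for $i>K$ while $\tau w_i$ increases); in the second case a short comparison using $n<n_k^{(i)}\le n_k^{(d)}$ reduces to the same monotonicity. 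The key subtlety throughout is that every appeal to "$n_k$ is large enough" must be traced back to a specific term in the maximum defining the lower bound \eqref{eq:nk}, and I would expect the bookkeeping in \eqref{eq:volnpi} — correctly organizing the case split over $i$ together with the two regimes for $\max\{n,n_k^{(i)}\}$ — to be the main obstacle, since it is the only estimate that is not a one-line substitution.
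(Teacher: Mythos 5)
Your plan follows essentially the same route as the paper: derive the monotonicity of the ratios $(1+\tau w_i)/(1+\tilde w_i)$ (equivalently, of $\tau w_i-\tilde w_i$) from \eqref{tau1}--\eqref{tau2}, and then verify each of \eqref{eq:wave}--\eqref{eq:volnpi} by substituting \eqref{eq:nki} and invoking the specific terms of the maximum in \eqref{eq:nk} together with $1-c_k>\varepsilon_{k-1}$. Your treatments of \eqref{eq:wave}, \eqref{eq:ckcond} and \eqref{eq:nkdup} match the paper's almost line for line.

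The one place where your sketch understates the work is the second regime of \eqref{eq:volnpi}, namely $n_k^{(1)}\le n<n_k^{(j)}$. This does \emph{not} ``reduce to the same monotonicity'': after bounding $n_k^{(j)}$ from above and $n$ from below via \eqref{eq:nki}, the required inequality becomes
$$R^{1+\tilde w_j}\,\frac{\rho_0^{(1)}}{\rho_0^{(j)}}\leq R^{\tau(w_j-w_1)\frac{\tau w_1-\tilde w_1}{1+\tilde w_1}(n_k-2\log_R\varepsilon_{k-1})},$$
whose exponent on the right is proportional to $w_j-w_1$ (not to $\tau w_j-\tilde w_j-(\tau w_1-\tilde w_1)$), and which is precisely why the constant $\alpha'$ appears in \eqref{eq:nk}. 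One must also observe separately that when $w_j=w_1$ this regime is vacuous, since then $\tilde w_j=\tilde w_1$ and hence $n_k^{(j)}=n_k^{(1)}\le n$. You correctly flag this case as the main obstacle and point to \eqref{eq:nk} as the source of the needed largeness, so I regard this as an incomplete step rather than a wrong one; but a full write-up must carry out this computation, as the paper does.
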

\begin{proof}
By \eqref{tau1}, we have that for $1\leq i\leq K$,
$$\tilde{w}_i=\tau w_i-\delta(1+\tau w_i)=\tau w_i(1-\delta)-\delta.$$
Since $w_i\leq w_{i+1}$, we deduce that $\tilde{w}_i\leq \tilde{w}_{i+1}$ for $1\leq i\leq K-1$.
% By our assumption $\delta<\frac{\tau w_1}{1+\tau w_1}$ which gives $\tilde{w}_1=\tau w_1-\delta(1+\tau w_1)>0$.
Hence, from \eqref{tau2} it follows that
$$0<\tilde{w}_1\leq \dots \leq \tilde{w}_K\leq  \tilde{w}_{K+1}=\tilde{w}_{K+2}= \dots =\tilde{w}_d$$
Now, by $\eqref{tau1}$, we also have that 
$$\frac{1+\tau w_i}{1+\tilde{w}_i}=\frac{1}{1-\delta} \text{ for } 1\leq i\leq K,$$
and, by \eqref{tau2}, we have that
$$\frac{1+\tau {w}_{K+1}}{1+\tilde{w}_{K+1}}\leq \frac{1+\tau {w}_{K+2}}{1+\tilde{w}_{K+2}}\leq \dots \leq \frac{1+\tau {w}_{d}}{1+\tilde{w}_d}.$$
Since $\tilde{w}_{K+1}<\tau w_{K+1}-\delta(1+\tau w_{K+1})$, we also have that
$$\frac{1+\tau w_{K+1}}{1+\tilde{w}_{K+1}}>\frac{1}{1-\delta}=\frac{1+\tau w_{K}}{1+\tilde{w}_K}.$$
Combining the above inequalities, we conclude that 
$$1<\frac{1}{1-\delta}=\frac{1+\tau {w}_1}{1+\tilde{w}_1}=\dots=\frac{1+\tau {w}_{K}}{1+\tilde{w}_K}<\frac{1+\tau {w}_{K+1}}{1+\tilde{w}_{K+1}}\leq \dots \leq \frac{1+\tau {w}_{d}}{1+\tilde{w}_d}.$$
Then \eqref{eq:nki} implies that
$$n_k<n_k^{(1)}=\dots= n_k^{(K)}<n_k^{(K+1)}\leq \dots\leq  n_k^{(d)}.$$
To prove \eqref{eq:wave}, just observe that
$$R^{n_k^{(1)}}\overset{\eqref{eq:nki}}{\leq}R^{\frac{1+\tau w_1}{1+\tilde{w}_1}n_k}\varepsilon_{k-1}^{-2\frac{1+\tau w_1}{1+\tilde{w}_1}-1}\overset{\eqref{eq:nk}}{\leq}R^{(1+\tau w_1-\tilde{w_1})n_k}\varepsilon_{k-1}^d.$$
To prove \eqref{eq:ckcond}, note that 
by \eqref{eq:nki}, it holds that
$$n_k^{(i)}\geq \frac{1+\tau w_i}{1+\tilde{w}_i}(n_k-2\log_{R}\varepsilon_{k-1}),$$
which gives
$$R^{-(1+\tilde{w}_i)n_k^{(i)}}\leq R^{-n_k(1+\tau w_i)}\varepsilon_{k-1}^{2(1+\tau w_i)}\leq (1-c_k)\frac{\varepsilon_{k-1}^{1+\tau w_i}}{R^{n_k(1+\tau w_i)}},$$
where in the last inequality we used the fact that $\varepsilon_{k-1}^{1+\tau w_i}\leq \varepsilon_{k-1}\leq  1-c_k$.\\
To prove \eqref{eq:nkdup}, observe that, by \eqref{eq:nk},
$n_k\geq -4\log_R\varepsilon_{k-1}$. Hence, 
\begin{equation*}
    n_k^{(d)}\overset{\eqref{eq:nki}}{=}\floor*{\frac{1+\tau w_d}{1+\tilde{w}_d}(n_k-2\log_{R}\varepsilon_{k-1})}+1 \leq 2\frac{1+\tau w_d}{1+\tilde{w}_d}n_k
\end{equation*}
Let us now prove \eqref{eq:volnpi}. For fixed $1\leq j\leq d$ we aim to show that
% \begin{equation}
%   \max_i\left\{\frac{R^{-(1+\tau w_i)n}}{\rho_0^{(i)}R^{-(1+\tilde{w}_i)\max\{n,n_k^{(i)}\}}} \right\}=\frac{R^{-(1+\tau w_1)n}}{\rho_0^{(1)}R^{-(1+\tilde{w}_1)\max\{n,n_k^{(1)}\}}}\quad\mbox{for }n\geq n_{k}^{(1)}.
% \end{equation}
$$\frac{R^{-(1+\tau w_j)n}}{R^{-(1+\tilde{w}_j)\max\{n,n_k^{(j)}\}}}\leq \frac{\rho_0^{(j)}}{\rho_0^{(1)}}R^{-(\tau w_1-\tilde{w}_1)n} \quad\text{ for }n\geq n_k^{(1)}.$$
First, consider the case $n\geq n_k^{(j)}$. Then we need to prove that
$$R^{-(\tau w_j-\tilde{w}_j)n}\leq \frac{\rho_0^{(j)}}{\rho_0^{(1)}}R^{-(\tau w_1-\tilde{w}_1)n}$$
When $\tau w_j-\tilde w_j\neq \tau w_1-\tilde w_1$, this follows easily by \eqref{eq:nk}. If $\tau w_j-\tilde{w}_j=\tau w_1-\tilde{w}_1$ then it must be $\tilde{w}_j=\tilde{w}_1$ (an easy consequence of \eqref{tau1} and \eqref{tau2}) and the inequality reduces to $1\leq 1$. Now, suppose that $n_k^{(1)}\leq n<n_k^{(j)}$. Then we need to show that
$$R^{(1+\tilde{w}_j)n_k^{(j)}}\leq \frac{\rho_0^{(j)}}{\rho_0^{(1)}}R^{(1+\tilde{w}_1+\tau(w_j-w_1))n}.$$
% \begin{equation}\label{eq:nki}
%     n_k^{(i)}:=\floor*{\frac{1+\tau w_i}{1+\tilde{w}_i}(n_k-2\log_{R}\varepsilon_{k-1})}+1. 
% \end{equation}
Since $n_k^{(j)}\overset{\eqref{eq:nki}}{\leq} (1+\tau w_j)(1+\tilde{w}_j)^{-1}(n_k-2\log_{R}\varepsilon_{k-1})+1 $ and $n\geq n_k^{(1)}\overset{\eqref{eq:nki}}{\geq} (1+\tau w_1)(1+\tilde{w}_1)^{-1}(n_k-2\log_{R}\varepsilon_{k-1})$, it is enough to pove that
$$R^{\tau(w_j-w_1)(n_k-2\log_{R}\varepsilon_{k-1})+(1+\tilde{w}_j)}\leq \frac{\rho_0^{(j)}}{\rho_0^{(1)}} R^{\tau(w_j-w_1)\frac{1+\tau w_1}{1+\tilde{w}_1}(n_k-2\log_{R}\varepsilon_{k-1})},$$
which is equivalent to 
$$R^{1+\tilde{w}_j}\frac{\rho_0^{(1)}}{\rho_0^{(j)}}\leq R^{\tau(w_j-w_1)\frac{\tau w_1-\tilde{w}_1}{1+\tilde{w_1}}(n_k-2\log_{R}\varepsilon_{k-1})}.$$
When $w_j\neq w_1$, the above inequality follows by \eqref{eq:nk}, since $n_k\geq \frac{2(1+\tilde{w}_j)(1+\tilde{w_1})}{(w_j-w_1)(\tau w_1-\tilde{w}_1)}$.\\
{If $w_1=w_j$, then $\tilde{w}_1=\tilde{w}_j$, which implies that $n_k^{(1)}=n_k^{(j)}$ and this case does not occur. }
\end{proof}

\begin{lemma}
\label{lem:newlem}
There exist constants $\varepsilon_0$ and $R_0$ only depending on $\delta, \tau,\bs w$ etc. such that for every $\varepsilon<\varepsilon_0$ and every $R\geq R_0$ it holds that
$$\rho^{-({\xi} n_k^{(d)}-n_k^{(d)}-1)(1-\varepsilon)}\leq 2^{-(3d+1)}\rho_0^{(1)}\cdot R^{(\tau w_1-\tilde w_1)n_k^{(1)}}\cdot R^{-(d+1)({\xi} n_k^{(d)}-n_k^{(d)}-1)}.$$
Moreover, for any fixed $\varepsilon$ as above, there exists $R(\varepsilon)$ such that whenever $R\geq R(\varepsilon)$ it holds that
$$\frac{1}{2}\geq \rho^{-({\xi} n_k^{(d)}-n_k^{(d)}-1)\varepsilon/2}$$
for all $k\in\mb N$.
\end{lemma}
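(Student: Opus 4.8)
The plan is to pass to logarithms in base $R$ and reduce both displayed inequalities to linear inequalities in $n_k^{(1)}$, exploiting that $\rho$ sits extremely close to $R^{d+1}$. Write $m_k:=\xi n_k^{(d)}-n_k^{(d)}-1=(\xi-1)n_k^{(d)}-1$. Since $\xi\ge\xi_0=(1+\tau w_d)/(1+\tilde w_1)+1>2$ (because $\tilde w_1\le\tilde w_d<\tau w_d$ by Lemma \ref{lem:Rynne}) and $n_k^{(d)}\ge 1$ for $k\in\mb N$, we have $m_k\ge\xi_0-2>0$. The basic estimate on $\rho$ is
\[
\prod_{i=1}^{d}\bigl(1-R^{-(1+\tilde w_i)}\bigr)\;\le\;\frac{\rho}{R^{d+1}}\;\le\;1,
\]
the upper bound using $\sum_i(1+\tilde w_i)=d+1$ (from \eqref{tau3}) and the lower bound using $\lfloor x\rfloor\ge x-1$. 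Setting $\eta_R:=(d+1)-\log_R\rho\ge 0$ we get $\eta_R\le d\log_R 2$ and, more sharply, $\eta_R\le (2d/\ln R)\,R^{-(1+\tilde w_1)}\to 0$ as $R\to\infty$; in particular $\log_R\rho\ge d$ once $R\ge 2^{d}$.

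For the first inequality, I would multiply both sides by $R^{(d+1)m_k}$, take $\log_R$, and use $\rho_0^{(1)}=R^{-(1+\tilde w_1)}/(d+1)!$ together with the identity $(d+1)-(1-\varepsilon)\log_R\rho=\eta_R+\varepsilon\log_R\rho\le\eta_R+(d+1)\varepsilon$, so that the claim becomes
\[
m_k\bigl(\eta_R+(d+1)\varepsilon\bigr)\;\le\;(\tau w_1-\tilde w_1)\,n_k^{(1)}-(1+\tilde w_1)-(3d+1)\log_R 2-\log_R(d+1)!.
\]
The decisive point is that $m_k$ is comparable to $n_k^{(1)}$: by \eqref{eq:nkdup}, $n_k^{(d)}\le 2\tfrac{1+\tau w_d}{1+\tilde w_d}n_k$, while \eqref{eq:nki} gives $n_k^{(1)}\ge\tfrac{1+\tau w_1}{1+\tilde w_1}n_k$, whence $(d+1)m_k\le(d+1)(\xi-1)n_k^{(d)}\le B\,n_k^{(1)}$ with $B:=2(d+1)(\xi-1)\tfrac{(1+\tau w_d)(1+\tilde w_1)}{(1+\tilde w_d)(1+\tau w_1)}$. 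Then put $\varepsilon_0:=(\tau w_1-\tilde w_1)/(4B)$ and choose $R_0$ so large that for $R\ge R_0$ one has $B\eta_R\le (d+1)(\tau w_1-\tilde w_1)/4$ and $(3d+1)\log_R 2+\log_R(d+1)!\le 1$. For $\varepsilon<\varepsilon_0$ the left-hand side above is then at most $\tfrac12(\tau w_1-\tilde w_1)n_k^{(1)}$ and the right-hand side is at least $(\tau w_1-\tilde w_1)n_k^{(1)}-3$, so everything reduces to the single bound $n_k^{(1)}\ge 6/(\tau w_1-\tilde w_1)$.

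This last bound is exactly what the first term of the maximum in \eqref{eq:nk} is designed to deliver. One checks $-\log_R\varepsilon_{k-1}\ge 1$ for all $k$ (immediate from the definition of $\varepsilon_0$ for $k=1$; for $k\ge 2$ it follows from \eqref{eq:epsk} together with $n_{k-1}^{(d)}\ge n_{k-1}$ and $\alpha\ge 1$), so \eqref{eq:nk} yields $n_k\ge\tfrac{(2(1+\tau w_1)/(1+\tilde w_1)+d+1)(1+\tilde w_1)}{\alpha'(\tau w_1-\tilde w_1)\tilde w_1}$; combining with $n_k^{(1)}\ge\tfrac{1+\tau w_1}{1+\tilde w_1}n_k$, $\alpha'\le 1$ and $\tilde w_1\le 1/d$ (since $\tilde w_1=\min_i\tilde w_i\le\frac1d\sum_i\tilde w_i=\frac1d$) gives $(\tau w_1-\tilde w_1)n_k^{(1)}\ge d(d+3)\ge 10$, which is more than enough. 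For the ``moreover'' part, when $R\ge 2^{d}$ we have $\log_R\rho\ge d$, hence $\rho^{m_k\varepsilon/2}=R^{m_k\varepsilon\log_R\rho/2}\ge R^{(\xi_0-2)\varepsilon d/2}$, which is $\ge 2$ whenever $R\ge R(\varepsilon):=\max\{2^{d},\exp(2\ln 2/((\xi_0-2)\varepsilon d))\}$; this is the asserted inequality $\rho^{-(\xi n_k^{(d)}-n_k^{(d)}-1)\varepsilon/2}\le 1/2$.

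The only genuine work is the bookkeeping in the second step: one must make sure the error contributions $m_k\eta_R$ and $(d+1)\varepsilon m_k$ — each of order $\varepsilon\,n_k^{(1)}+n_k^{(1)}/\ln R$ — remain below the gain $(\tau w_1-\tilde w_1)n_k^{(1)}$ uniformly in $k$, and that the additive constants on the right are absorbed, which is precisely the purpose of the rather intricate first term of the maximum in \eqref{eq:nk}. All the constants $\varepsilon_0$, $R_0$, $R(\varepsilon)$ produced in this way depend only on $d$, $\delta$, $\tau$, $\bs w$, $\tilde{\bs w}$ and the fixed parameter $\xi$, as the statement requires.
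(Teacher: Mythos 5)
Your proposal is correct and follows essentially the same route as the paper: take logarithms base $R$, compare $\rho$ with $R^{d+1}$ (losing only powers of $2$), use \eqref{eq:nkdup} together with \eqref{eq:nki} to bound $\xi n_k^{(d)}-n_k^{(d)}-1$ by a constant multiple of $n_k^{(1)}$, and then choose $\varepsilon$ small and $R$ large so that both error terms stay below $(\tau w_1-\tilde w_1)n_k^{(1)}$ uniformly in $k$. You are merely more explicit than the paper about the uniformity in $k$ and about the ``moreover'' part, which the paper dismisses as obvious.
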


\begin{proof}
We aim to prove that
$$\rho^{-\alpha_k(1-\varepsilon)}\leq 2^{-(3d+1)}\rho_0^{(1)}\cdot R^{(\tau w_1-\tilde w_1)n_k^{(1)}}\cdot R^{-(d+1)\alpha_k},$$
where
$$\alpha_k={\xi} n_k^{(d)}-n_k^{(d)}-1.$$
This can be written as
$$2^{3d+1}\cdot R^{-(\tau w_1-\tilde w_1)n_k^{(1)}}\cdot R^{(d+1)\alpha_k}\leq \rho^{\alpha_k(1-\varepsilon)}\cdot \rho_0^{(1)}.$$
By using the bounds
$$\frac{R^{d+1}}{2^d}\leq \rho\leq R^{d+1},$$
we can rewrite this as
$$(\tau w_1-\tilde w_1)n_k^{(1)}-\frac{(d\alpha_k+3d+1)\log 2}{\log R}\geq \varepsilon \cdot\alpha_{k}(d+1)-\frac{\log \rho_0^{(1)}}{\log R}.$$
Now, suppose that
\begin{equation}
\label{eq:Rbig}
\frac{(d\alpha_k+3d+1)\log 2}{\log R}+\frac{|\log \rho_0^{(1)}|}{\log R}\leq \frac{(\tau w_i-\tilde w_1)n_k^{(1)}}{2}.    
\end{equation}
Then we just need to choose $\varepsilon$ small enough so that
$$\frac{(\tau w_i-\tilde w_1)n_k^{(1)}}{2}\geq \varepsilon\alpha_k(d+1)$$
and this can easily be done, by \eqref{eq:nkdup}. To ensure \eqref{eq:Rbig}, it is enough to choose $R$ large enough in terms of the constants. The second inequality in the statement is obvious.
\end{proof}

\subsection{Properties of the Cantor Set} 
In order to define the set $E_{\infty}$, we shall first construct collections $\mathcal{E}_l$ for $l\in \N\cup \{0\}$ consisting of disjoint boxes such that the sequence $\{\mathcal{E}_l\}_{l\in \N\cup\{0\}}$ is nested. By this, we mean that for each $l\in \N$ and for each box $E\in \mathcal{E}_{l}$, there exists a unique box $E'\in \mathcal{E}_{l-1}$ such that $E\subset E'$. Additionally, the collection $\mathcal{E}_{l}$ will be constructed satisfying the following properties:\\\\
\textbf{Dimensional properties}:
\begin{enumerate}[label=(D\arabic*), ref=D\arabic*]
  \item \label{D1} For $k\in \N$ and $n_k^{(d)}<\ell\leq n_{k+1}$ every box $E\in \mathcal{E}_l$ has side length $$\rho_0^{(i)}R^{-(1+\tilde{w_i})l}.$$
  \item \label{D2} For $k\in \N$ and $n_k<\ell\leq n_k^{(d)}$ every $E\in \mathcal{E}_l$ has side length
$$\rho_0^{(i)}R^{-(1+\tilde{w_i})\min\{l,n_{k}^{(i)}\}}.$$
        \end{enumerate}
\textbf{Pointwise properties}:\\
For each box $E\in \mathcal{E}_l$ and for every $\bs x\in E$ it holds that
\begin{enumerate}[label=(P\arabic*), ref=P\arabic*]
  \item \label{P1} If $l=n_k^{(d)}$ for some $k\in \N$, then there exists a rational vector 
        $\bs p_k/q_k$ (depending on $\bs x$) such that
        \begin{equation}
        \label{eq:p11}
            R^{n_k}\leq q_k \leq \frac{1}{\varepsilon_{k-1}}R^{n_k},
        \end{equation}
     \begin{equation}
     \label{eq:p12}
           (2c_k-1)^{1/w_1}\frac{1}{q_k^{\tau}}\leq \|q_k \bs x-\bs p_k\|_{\bs w}<\frac{1}{q_k^{\tau}}, 
        \end{equation}
        and
        $$\|s\bs x-\bs r\|_{\bs w}>\frac{1}{s^{\tau}} \quad \text{ for every }\frac{\bs r}{s}\neq \frac{\bs p_k}{q_k} \text{ with } R^{n_k}\leq s<R^{n_k^{(1)}}.$$
  
  \item \label{P2}
  If $n_k^{(d)}+1<l\leq n_{k+1}$ for some $k\in \N$, then  $$\|s\bs x-\bs r\|_{\tilde{\bs w}}> \frac{\varepsilon_0}{s} \quad\text{ for every }\frac{\bs r}{s} \text{ with } R^{n_{k}^{(d)}+1}\leq s<R^{l}.$$
  
  \item \label{P3} If $l={\xi} n_k^{(d)}$ for some $k\in \N$, then 
  $$\|s\bs x-\bs r\|_{\bs w}>\frac{1}{s^{\tau}} \quad\text{ for every }\frac{\bs r}{s} \text{ with } R^{n_k^{(1)}}\leq s<R^{n_k^{(d)}+1}.$$
  \end{enumerate}

Assume now that collections $\mc E_{\ell}$ satisfying Properties \eqref{D1}, \eqref{D2}, and \eqref{P1}-\eqref{P3} exist, and consider the Cantor set 
$$E_{\infty}:=\bigcap_{l=0}^{\infty}\bigcup_{E\in \mathcal{E}_l}E.$$
The following proposition shows that Properties \eqref{P1}-\eqref{P3} are enough to ensure that $E_{\infty}\subseteq E_{\bs w}(\tau)$. Properties \eqref{D1} and \eqref{D2} will be key in estimating the Hasudorff dimension of the set $E_{\infty}$.

\begin{proposition}
For the set $E_{\infty}$ defined above it holds that
$$E_{\infty}\subseteq E_{w}(\tau).$$
\end{proposition}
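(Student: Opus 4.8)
The plan is to verify directly the two conditions that define membership in $E_{\bs w}(\tau)=W_{\bs w}(1,\tau)\setminus\bigcup_{0<c<1}W_{\bs w}(c,\tau)$: first, that every $\bs x\in E_\infty$ belongs to $W_{\bs w}(1,\tau)$; second, that no $\bs x\in E_\infty$ belongs to $W_{\bs w}(c,\tau)$ for any fixed $0<c<1$. (The inclusion $E_\infty\subseteq[0,1)^d$ is immediate from the construction of the boxes $\mc E_\ell$.)

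For the first condition, fix $\bs x\in E_\infty$. For every $k\in\mb N$ the point $\bs x$ lies in some box of $\mc E_{n_k^{(d)}}$, so Property~\eqref{P1} furnishes a rational vector $\bs p_k/q_k$ — which we may and will take in lowest terms — with $R^{n_k}\le q_k\le\varepsilon_{k-1}^{-1}R^{n_k}$ and, by \eqref{eq:p12}, $\|q_k\bs x-\bs p_k\|_{\bs w}<q_k^{-\tau}$. Since $q_k\ge R^{n_k}\to\infty$, the set $\{(\bs p_k,q_k):k\in\mb N\}$ is infinite and each of its elements solves $\|q\bs x-\bs p\|_{\bs w}<q^{-\tau}$; hence $\bs x\in W_{\bs w}(1,\tau)$.

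For the second condition, fix $0<c<1$ and $\bs x\in E_\infty$. For every $q$ there are only finitely many $\bs p$ with $\|q\bs x-\bs p\|_{\bs w}<cq^{-\tau}<1$, so it is enough to produce $k_0$ with the property that $\|q\bs x-\bs p\|_{\bs w}\ge cq^{-\tau}$ whenever $q\ge R^{n_{k_0}}$. Given such a pair $(\bs p,q)$, choose the index $k\ge k_0$ with $R^{n_k}\le q<R^{n_{k+1}}$; by Lemma~\ref{lem:lemma5} the interval $[R^{n_k},R^{n_{k+1}})$ is the union of $[R^{n_k},R^{n_k^{(1)}})$, $[R^{n_k^{(1)}},R^{n_k^{(d)}+1})$ and $[R^{n_k^{(d)}+1},R^{n_{k+1}})$, and we treat the three cases separately. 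If $q$ lies in the first interval, we apply \eqref{P1} to the box of $\mc E_{n_k^{(d)}}$ containing $\bs x$: if $\bs p/q\ne\bs p_k/q_k$ then $\|q\bs x-\bs p\|_{\bs w}>q^{-\tau}\ge cq^{-\tau}$, while if $\bs p/q=\bs p_k/q_k$ then $q=mq_k$, $\bs p=m\bs p_k$ with $m\ge1$ (here is where the reduced form is used), and combining $1/w_i\ge1$ with the lower bound in \eqref{eq:p12} gives $\|q\bs x-\bs p\|_{\bs w}\ge m\,(2c_k-1)^{1/w_1}q_k^{-\tau}\ge(2c_k-1)^{1/w_1}q^{-\tau}\ge cq^{-\tau}$, the last step being valid once $(2c_k-1)^{1/w_1}\ge c$ for all $k\ge k_0$, which holds for $k_0$ large since $c_k\to1$. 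If $q$ lies in the second interval, \eqref{P3} applied to the box of $\mc E_{\xi n_k^{(d)}}$ containing $\bs x$ gives $\|q\bs x-\bs p\|_{\bs w}>q^{-\tau}\ge cq^{-\tau}$. If $q$ lies in the third interval, \eqref{P2} with $l=n_{k+1}$ gives $\|q\bs x-\bs p\|_{\tilde{\bs w}}>\varepsilon_0/q$, hence $|qx_i-p_i|>\varepsilon_0^{\tilde w_i}q^{-\tilde w_i}$ for some $i$, and therefore $\|q\bs x-\bs p\|_{\bs w}\ge|qx_i-p_i|^{1/w_i}>\varepsilon_0^{\tilde w_i/w_i}\,q^{\,\tau-\tilde w_i/w_i}\cdot q^{-\tau}\ge cq^{-\tau}$, where the final inequality uses that $\tau-\tilde w_i/w_i>0$ (since $\tilde w_i<\tau w_i$ for every $i$ by Lemma~\ref{lem:Rynne} and the choice of $\delta$) together with the fact that $q\ge R^{n_{k_0}}$ can be taken large, uniformly over the finitely many $i$. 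In all three cases $\|q\bs x-\bs p\|_{\bs w}\ge cq^{-\tau}$, so $\bs x\notin W_{\bs w}(c,\tau)$; combined with the first part this yields $\bs x\in E_{\bs w}(\tau)$, and as $\bs x\in E_\infty$ was arbitrary the proposition follows.

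The bookkeeping steps — the reduction to $q\ge R^{n_{k_0}}$ and the covering of $[R^{n_k},R^{n_{k+1}})$ by the three subintervals — are routine given Lemma~\ref{lem:lemma5}. I expect two points to require genuine care. First, in the first subinterval one must be certain that the distinguished approximant $\bs p_k/q_k$ cannot recur with a strictly smaller denominator; taking $\bs p_k/q_k$ in lowest terms (as the construction can be arranged to provide) reduces this to the multiples $q=mq_k$ handled above, but this reliance on lowest terms should be made explicit. Second, and more substantively, the conversion in the third subinterval of a lower bound for $\|\cdot\|_{\tilde{\bs w}}$ into one for $\|\cdot\|_{\bs w}$ succeeds \emph{only} because $\tilde w_i$ is strictly smaller than $\tau w_i$ for all $i$ and $q$ is large; this is exactly the place where the auxiliary weight $\tilde{\bs w}$ of Lemma~\ref{lem:Rynne} is essential, and I expect it to be the main obstacle to presenting cleanly.
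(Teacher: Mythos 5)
Your proof is correct and takes essentially the same route as the paper's, which simply invokes \eqref{P1}--\eqref{P3} on the three ranges $[R^{n_k},R^{n_k^{(1)}})$, $[R^{n_k^{(1)}},R^{n_k^{(d)}+1})$, $[R^{n_k^{(d)}+1},R^{n_{k+1}})$ and lets $c_k\to 1$. You additionally make explicit two points the paper glosses over --- the non-reduced representations $(m\bs p_k,mq_k)$ of the excluded rational point, and the conversion of the $\|\cdot\|_{\tilde{\bs w}}$ lower bound in \eqref{P2} into a $\|\cdot\|_{\bs w}$ lower bound via $\tilde w_i<\tau w_i$ --- and both are handled correctly.
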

\begin{proof}
Let $\bs x\in E_{\infty}$. Since $\bs x\in \mathcal{E}_{n_k^{(d)}}$ for every $k\in \N$, by \eqref{P1}, there exists a sequence $\bs p_k/q_k$ such that $q_k\rightarrow \infty$ and $\|q_k\bs x-\bs p_k\|_{\bs w}\leq q_k^{-\tau}$. This implies that $\bs x\in W_{\bs w}(\tau)$. Let $c<1$. Since $(2c_k-1)^{1/w_1}\rightarrow 1$ as $k\rightarrow \infty$, there exists $k_0$ such that for every $k\geq k_0$, $c<(2c_k-1)^{1/w_1}$. Hence, by \eqref{eq:p12}, $cq_k^{-\tau}\leq \|q_k \bs x-\bs p_k\|_{\bs w}$. By \eqref{P2} and \eqref{P3}, it follows that $\|q\bs x-\bs p\|_{\bs w}\geq c{q^{-\tau}}$ for every $q\geq q_{k_0}$. Thus $\bs x\notin W_{w}(c,\tau)$ for every $c<1$. This gives $\bs x\in E_{\bs w}(\tau)$. 
\end{proof}

We conclude this subsection by proving the following useful lemma.
 
\begin{lemma}
\label{lem:nkbad}
   Let $k\in\mb N$ and assume that collections $\mc E_{\ell}$ satisfying properties $\eqref{P1}-\eqref{P3}$ exist for $\ell\leq n_{k+1}$. Then for every $E\in \mathcal{E}_{n_{k+1}}$, and $\bs x\in E$ we have that 
   \begin{equation*}
    \|s\bs x-\bs r\|_{\tilde{\bs w}}>\frac{\varepsilon_{k}}{s} \text{ for every } \frac{\bs r}{s}\in\mb Q^d \text{ such that } R^{n_k}\leq s< R^{n_{k+1}}.
\end{equation*}
\end{lemma}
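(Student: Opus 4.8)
The plan is to distinguish three ranges for the denominator $s$ with $R^{n_k}\le s< R^{n_{k+1}}$, matching the three pointwise properties, and in each range derive a lower bound for $\|s\bs x-\bs r\|_{\tilde{\bs w}}$ from the corresponding $\bs w$-bound via the relation between $\bs w$ and $\tilde{\bs w}$ encoded by $\alpha=\max_i \tau w_i/\tilde w_i$. The key elementary inequality is that whenever $\|s\bs x-\bs r\|_{\bs w}>t$ for some $t\le 1$, then for each coordinate $|s x_i-r_i|>t^{w_i}$, hence $|s x_i-r_i|^{1/\tilde w_i}>t^{w_i/\tilde w_i}\ge t^{\alpha}$ (using $t\le 1$ and $w_i/\tilde w_i\le \alpha$), so $\|s\bs x-\bs r\|_{\tilde{\bs w}}>t^{\alpha}$. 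Thus a $\bs w$-separation of size $s^{-\tau}$ upgrades to a $\tilde{\bs w}$-separation of size $s^{-\alpha\tau}$, and the task becomes checking $s^{-\alpha\tau}\ge \varepsilon_k/s$, i.e. $\varepsilon_k\le s^{1-\alpha\tau}$, throughout the relevant range — the worst case being $s$ close to $R^{n_{k+1}}$, but we only need $s<R^{\xi n_k^{(d)}}$ in the first two ranges (the third range handles the tail), and the definition \eqref{eq:epsk} of $\varepsilon_k$ together with $\alpha(n_k^{(d)}+1)\ge \alpha\tau$ and the bound $n_{k+1}>\xi n_k^{(d)}$ is exactly what makes this work.

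Concretely, I would proceed as follows. \textbf{Range 1:} $R^{n_k}\le s< R^{n_k^{(1)}}$. Here, since $\bs x\in E\subseteq \mc E_{n_{k+1}}\subseteq \mc E_{n_k^{(d)}}$, Property \eqref{P1} applies: for all but possibly one rational $\bs p_k/q_k$ (with $q_k< R^{n_k^{(1)}}$, so $q_k$ lies in this range only if $q_k\ge R^{n_k}$) we have $\|s\bs x-\bs r\|_{\bs w}>s^{-\tau}$, which by the upgrading inequality gives $\|s\bs x-\bs r\|_{\tilde{\bs w}}> s^{-\alpha\tau}$; and for the exceptional $\bs p_k/q_k$ itself, \eqref{eq:p12} gives $\|q_k\bs x-\bs p_k\|_{\bs w}\ge (2c_k-1)^{1/w_1} q_k^{-\tau}$, so $\|q_k\bs x-\bs p_k\|_{\tilde{\bs w}}\ge \big((2c_k-1)^{1/w_1}q_k^{-\tau}\big)^{\alpha}$ — I need to check $(2c_k-1)^{\alpha/w_1} q_k^{-\alpha\tau}\ge \varepsilon_k/q_k$, which follows since $2c_k-1> 2\cdot\frac34-1=\frac12$ and the constant $2^{-1/(w_1\tilde w_1)}$ in \eqref{eq:epsk} was placed there precisely to absorb $(2c_k-1)^{\alpha/w_1}\ge 2^{-\alpha/w_1}\ge 2^{-1/(w_1\tilde w_1)}$ (as $\alpha = \max_i \tau w_i/\tilde w_i\le 1/\tilde w_i\cdot\ldots$, more simply $\alpha/w_1\le 1/(w_1\tilde w_1)$ needs $\tau w_i\le w_1/\tilde w_1\cdot\tilde w_i$; one should instead just note $\alpha\tau\le \alpha(n_k^{(d)}+1)$ and bound crudely). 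In all cases $\|s\bs x-\bs r\|_{\tilde{\bs w}}\ge \varepsilon_k s^{-1}$, using $s^{1-\alpha\tau}\ge (R^{n_k^{(1)}})^{1-\alpha\tau}$ and comparing with \eqref{eq:epsk}. \textbf{Range 2:} $R^{n_k^{(1)}}\le s< R^{n_k^{(d)}+1}$. Here Property \eqref{P3}, valid because $\bs x\in \mc E_{\xi n_k^{(d)}}$ (note $\xi n_k^{(d)}< n_{k+1}$ by Lemma \ref{lem:lemma5}), gives $\|s\bs x-\bs r\|_{\bs w}>s^{-\tau}$ for every rational in this range, hence $\|s\bs x-\bs r\|_{\tilde{\bs w}}>s^{-\alpha\tau}\ge (R^{n_k^{(d)}+1})^{-\alpha\tau}$; comparing with $\varepsilon_k s^{-1}=\varepsilon_k s^{-1}$ and using $s< R^{n_k^{(d)}+1}$ reduces to $\varepsilon_k\le (R^{n_k^{(d)}+1})^{1-\alpha\tau}\cdot R^{n_k^{(d)}+1}= R^{(n_k^{(d)}+1)(2-\alpha\tau)}$, wait — I should instead directly compare $s^{-\alpha\tau}$ with $\varepsilon_k/s$: need $\varepsilon_k\le s^{1-\alpha\tau}$; since $1-\alpha\tau\le 0$ (as $\alpha,\tau>1$) the right side is decreasing, minimized at $s=R^{n_k^{(d)}+1}$, giving $\varepsilon_k\le R^{-(n_k^{(d)}+1)(\alpha\tau-1)}$, which by \eqref{eq:epsk} reads $2^{-1/(w_1\tilde w_1)}R^{-(\alpha(n_k^{(d)}+1)-n_k)}\le R^{-(n_k^{(d)}+1)(\alpha\tau-1)}$, i.e. $\alpha(n_k^{(d)}+1)-n_k\ge (n_k^{(d)}+1)(\alpha\tau-1)$ up to the harmless $\log 2$ term, equivalently $n_k^{(d)}+1\ge ?$… this needs $\alpha\ge \alpha\tau - 1 + \tfrac{n_k-?}{\ldots}$; since $\tau>1$ this is the delicate inequality and I expect to need \eqref{eq:nk} (the $n_k\ge -\frac{2}{\tau-1}\log_R\varepsilon_{k-1}$ clause) together with $n_k^{(d)}\ge n_k$. \textbf{Range 3:} $R^{n_k^{(d)}+1}\le s< R^{n_{k+1}}$. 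Here Property \eqref{P2} is stated for $\tilde{\bs w}$ directly: $\|s\bs x-\bs r\|_{\tilde{\bs w}}>\varepsilon_0 s^{-1}$ for $R^{n_k^{(d)}+1}\le s< R^{l}$ with $l=n_{k+1}$, and since $\varepsilon_0\ge\varepsilon_k$ (the $\varepsilon_k$ are decreasing to $0$ from $\varepsilon_0$) the conclusion is immediate.

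The main obstacle is the bookkeeping in Range 2 (and the exceptional case of Range 1): one must verify that the exponent in the definition \eqref{eq:epsk} of $\varepsilon_k$, namely $\alpha(n_k^{(d)}+1)-n_k$, is large enough that $\varepsilon_k$ is smaller than $s^{1-\alpha\tau}$ uniformly for $s$ up to $R^{n_k^{(d)}+1}$, and that the multiplicative constant $2^{-1/(w_1\tilde w_1)}$ correctly absorbs the loss from raising the $(2c_k-1)^{1/w_1}$ factor to the power $\alpha$. This is where the precise choices of $n_k$ via \eqref{eq:nk}, of $\varepsilon_k$ via \eqref{eq:epsk}, and the inequality $\tau\le \alpha \tilde w_i / w_i$-type relations (equivalently $\alpha(n_k^{(d)}+1)\ge \alpha\tau$ since $n_k^{(d)}\ge 0$, and $n_k^{(d)}+1\ge n_k$ so that the dominant term $\alpha n_k^{(d)}$ beats $\alpha\tau n_k^{(d)}$ is \emph{false} — rather one uses that $\varepsilon_k$ carries a factor $R^{-\alpha n_k^{(d)}}$ against $R^{+n_k}$, and $\alpha\tau n_k^{(d)}$ is controlled because $n_k^{(d)}$ itself is only a bounded multiple of $n_k$ by \eqref{eq:nkdup}, while the \emph{tail} past $R^{n_k^{(d)}+1}$ is delegated to Range 3). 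The remaining steps — the upgrading inequality $\|\cdot\|_{\bs w}>t\ \Rightarrow\ \|\cdot\|_{\tilde{\bs w}}>t^\alpha$ for $t\le 1$, and the invocations of \eqref{P1}, \eqref{P2}, \eqref{P3} — are routine, contingent only on confirming that the boxes $E\in\mc E_{n_{k+1}}$ are nested inside the boxes of $\mc E_{n_k^{(d)}}$ and $\mc E_{\xi n_k^{(d)}}$, which holds by the nesting hypothesis since $n_k^{(d)}<\xi n_k^{(d)}<n_{k+1}$ by Lemma \ref{lem:lemma5}.
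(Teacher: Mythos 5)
Your case decomposition (the ranges $R^{n_k}\le s<R^{n_k^{(1)}}$, $R^{n_k^{(1)}}\le s<R^{n_k^{(d)}+1}$, and $R^{n_k^{(d)}+1}\le s<R^{n_{k+1}}$, handled via \eqref{P1}, \eqref{P3} and \eqref{P2} respectively) is exactly the paper's, and your Range 3 is correct as stated. The gap is in the quantitative step of Ranges 1 and 2. Applying your upgrading inequality with $t=s^{-\tau}$ yields only $\|s\bs x-\bs r\|_{\tilde{\bs w}}>s^{-\alpha\tau}$, and you then need $\varepsilon_k\le s^{1-\alpha\tau}$ for all $s$ up to $R^{n_k^{(d)}+1}$. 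Comparing exponents with \eqref{eq:epsk}, this amounts to $\alpha(n_k^{(d)}+1)-n_k\ge(\alpha\tau-1)(n_k^{(d)}+1)$, i.e.\ $\bigl(1-\alpha(\tau-1)\bigr)(n_k^{(d)}+1)\ge n_k$, which is false whenever $\alpha(\tau-1)\ge 1$ --- in particular for every $\tau\ge 2$, since $\alpha\ge 1$. No appeal to \eqref{eq:nk} or \eqref{eq:nkdup} can repair this: the loss is a structural factor of $\tau$ in the exponent, not a matter of $n_k$ being large enough. You half-sense this in your closing paragraph but do not resolve it.

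The repair is to argue coordinate-wise rather than through the norm, using the two endpoints of the range of $s$ for different purposes. If $i$ is a coordinate realizing the $\bs w$-separation, then $|sx_i-r_i|>s^{-\tau w_i}>R^{-\tau w_i(n_k^{(d)}+1)}$ (using $s<R^{n_k^{(d)}+1}$), and it suffices to check $R^{-\tau w_i(n_k^{(d)}+1)}\ge \varepsilon_k^{\tilde w_i}R^{-n_k\tilde w_i}\ge \varepsilon_k^{\tilde w_i}s^{-\tilde w_i}$, the last step using $s\ge R^{n_k}$. The first of these inequalities reads $\varepsilon_k\le R^{\,n_k-(\tau w_i/\tilde w_i)(n_k^{(d)}+1)}$, which holds by \eqref{eq:epsk} precisely because $\tau w_i/\tilde w_i\le\alpha$: this is what the exponent $\alpha(n_k^{(d)}+1)-n_k$ in the definition of $\varepsilon_k$ is designed for. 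Your global upgrade, by contrast, evaluates both the $\tau w_i$-part and the $\tilde w_i$-part of the comparison at the same worst-case value of $s$, thereby replacing the exponent $\tau w_i/\tilde w_i\le\alpha$ by $\alpha\tau$. The exceptional rational $\bs p_k/q_k$ is handled the same coordinate-wise way, the factor $(2c_k-1)^{w_i/w_1}\ge 2^{-w_i/w_1}$ being absorbed by the strict inequality $q_k<R^{n_k^{(d)}+1}$ (or by the constant $2^{-1/(w_1\tilde w_1)}$ in \eqref{eq:epsk}).
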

\begin{proof}
Let $\bs x\in E$ and $\bs r/s$ be such $R^{n_k}\leq s<R^{n^{(d)}_k+1}$ and $\bs r/s\neq \bs p_k/q_k$. Then by \eqref{P1} and \eqref{P3}, there exists $1\leq i\leq d$ such that
   
$$|sx_i-r_i|>\frac{1}{s^{\tau w_i}}>\frac{1}{R^{\tau w_i (n^{(d)}_k+1)}}\overset{\eqref{eq:epsk}}{\geq }\frac{\varepsilon_k^{\tilde{w}_i}}{R^{n_k\tilde{w}_i}}\geq \frac{\varepsilon_k^{\tilde{w}_i}}{s^{\tilde{w}_i}}.$$
    % Hence $\|s\bs x-\bs r\|_{\tilde{\bs w}}>\frac{\varepsilon_{k}}{s}$.\\
If $\bs r/s=\bs p_k/q_k$, then by \eqref{P1},
$$|sx_i-r_i|>\frac{(2c_k-1)^{w_i/w_1}}{s^{\tau w_i}}>\frac{1}{R^{\tau w_i (n^{(d)}_k+1)}}\overset{\eqref{eq:epsk}}{\geq }\frac{\varepsilon_k^{\tilde{w}_i}}{R^{n_k\tilde{w}_i}}\geq \frac{\varepsilon_k^{\tilde{w}_i}}{s^{\tilde{w}_i}}.$$
If $\bs r/s$ is such that $R^{n_k^{(d)}+1}\leq s<R^{n_{k+1}}$, then by \eqref{P2}
$$\|s\bs x-\bs r\|_{\tilde{\bs w}}>\frac{\varepsilon_0}{s}>\frac{\varepsilon_k}{s}.$$
\end{proof}

\subsection{Construction of the Collections $\mathcal{E}_l$}
\label{sec:3.2}
We now construct the collections $\mc E_{\ell}$ so that Properties \eqref{D1}, \eqref{D2}, \eqref{P1}-\eqref{P3} are satisfied.\\\\
Starting from a fixed corner, subdivide $[0,1)^d$ into boxes of side length $$\rho_0^{(i)}=\frac{R^{-(1+\tilde{w}_i)}}{(d+1)!}.$$
Denote by $\mathcal{E}_0$ the collection of these boxes.
% (Check that the above properties are satisfied). 
Then 
$$ \# \mathcal{E}_0 =\floor*{\frac{1}{\rho_0^{(1)}}}\times \dots \times \floor*{\frac{1}{\rho_0^{(d)}}}.$$
 % and define
 % $$E_{0}:=\bigsqcup_{R_0\in \mathcal{E}_0}R_{0}$$
Assuming that $\mathcal{E}_{n}$ is constructed for $0\leq n\leq l-1$, we shall now construct $\mathcal{E}_{l}$ by defining the set
$$\mathcal{E}_l(E)=\{E'\in\mc E_{\ell}:E'\subset E\}$$
for every $E\in \mathcal{E}_{l-1}$, based on the value of $\ell$ within the range $n_k<\ell\leq n_{k+1}$.
% \textcolor{blue}{I think it's a bit confusing}
% for each $E\in\mc E_{l-1}$. We distinguish four cases, based on the value of $\ell$ within the range $n_k<\ell\leq n_{k+1}$.

\bigskip
 
 \noindent \textbf{Case 1}: ${\xi} n_{k}^{(d)}<\ell\leq n_{k+1}$ for some $k\in \N$.

\bigskip

By \eqref{D1}, $E$ has side length $\rho_0^{(i)} R^{-(1+\tilde{w}_i)(l-1)}$.
Denote by $\mathcal{F}_l(E)$, the collection of boxes obtained by subdividing $E$ from a fixed corner into boxes of side length $\rho_0^{(i)}R^{-(1+\tilde{w}_i)l}$. Then
$$\# \mathcal{F}_l(E)= \prod_{i=1}^d \floor*{R^{(1+\tilde{w_i})}}=: \rho.$$
By applying Lemma \ref{lem:hyp} for the box $E$ and $\bs u=\tilde{\bs w}$, $n=l-1$, and $\varepsilon=\varepsilon_0$ (note that $\varepsilon_0$ satisfies the hypotheses of Lemma \ref{lem:hyp}), we deduce that the rational points  
$$\left\{\frac{\bs r}{s}\in \Q^{d}: R^{l-1}\leq s<R^{l} \text{ and } \|s\bs x-\bs r\|_{\tilde{w}}\leq \frac{\varepsilon_0}{s} \text{ for some } \bs x\in E\right\}$$ 
lie on an affine hyperplane in $\R^{d}$, which we denote by $\pi_E$. Consider the following thickening of this plane
$$N(\pi_E):=\left\{\bs x\in E: \exists\, \bs z\in \pi_{E} \text{ such that }|x_i-z_i|\leq \frac{\varepsilon_0^{\tilde{w}_i}}{R^{(l-1)(1+\tilde{w}_i)}}\ \mbox{for }i=1,\dotsc,d \right\}.$$
Let
$$\mathcal{B}_{l}(E):=\{F\in \mathcal{F}_{l}(E): F\cap N(\pi_E)\neq \emptyset\}$$
and define
\begin{equation}
\label{eq:elcase1}
\mathcal{E}_{l}(E):=\mathcal{F}_{l}(E)\setminus \mathcal{B}_{l}(E).    
\end{equation}

\begin{lemma}
\label{lem:case1}
For any $E\in\mc E_{\ell-1}$ and $\mc E_{\ell}(E)$ as in \eqref{eq:elcase1} we have that
$$\#\mathcal{E}_l(E)\geq \rho(1-2^d \floor{\rho_1}^{-1}).$$
Moreover, for every box $E'\in \mc E_{\ell}(E)$ Properties \eqref{D1} and \eqref{P2} hold.    
\end{lemma}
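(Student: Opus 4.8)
The plan is to break the statement into its two parts: a counting bound on $\#\mc E_\ell(E)$, and the verification of Properties \eqref{D1} and \eqref{P2} for the surviving boxes. The counting part is the heart of the matter. By construction $\#\mc F_\ell(E)=\rho$, and $\mc E_\ell(E)=\mc F_\ell(E)\setminus\mc B_\ell(E)$, so it suffices to show $\#\mc B_\ell(E)\leq 2^d\floor{\rho_1}^{-1}\rho$. First I would use Lemma \ref{lem:hyp} (applied with $\bs u=\tilde{\bs w}$, $n=\ell-1$, $\varepsilon=\varepsilon_0$, after checking that $\varepsilon_0$ and the side length of $E$ meet its hypotheses — this is where the bound $R>(8(d+1)!)^{1/(1+\tilde w_1)}$ and the definition of $\varepsilon_0$ get used) to conclude that the relevant rational points lie on a single affine hyperplane $\pi_E$. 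Then every box $F\in\mc F_\ell(E)$ meeting the thickening $N(\pi_E)$ must lie within distance comparable to $\eps_0^{\tilde w_i}R^{-(\ell-1)(1+\tilde w_i)}$ of $\pi_E$ in the $i$-th direction; since the boxes of $\mc F_\ell(E)$ have side $\rho_0^{(i)}R^{-(1+\tilde w_i)\ell}$ in that direction, which is a factor $R^{1+\tilde w_i}=\rho_i$ smaller, the slab $N(\pi_E)$ is crossed by at most a bounded number of layers of $\mc F_\ell(E)$-boxes in the direction transverse to $\pi_E$.

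Concretely, I would choose a coordinate direction $i_0$ along which $\pi_E$ is "not too flat" (a coordinate hyperplane is the extreme case, but in general one picks the direction in which the normal has largest weighted component), so that fixing the other $d-1$ coordinates pins the $i_0$-th coordinate of $\pi_E$ to an interval, and count: there are $\rho/\floor{\rho_{i_0}}$ choices of the transverse "columns", and within each column at most $2^d$ boxes meet the neighbourhood $N(\pi_E)$ (the $2^d$ absorbing the various rounding errors from the floors in $\#\mc F_\ell(E)$ and the thickening). This yields $\#\mc B_\ell(E)\leq 2^d\rho\floor{\rho_{i_0}}^{-1}\leq 2^d\rho\floor{\rho_1}^{-1}$, using $\rho_1=R^{1+\tilde w_1}\leq R^{1+\tilde w_{i_0}}=\rho_{i_0}$ since $\tilde w_1\leq\tilde w_{i_0}$ (Lemma \ref{lem:lemma5}), hence $\#\mc E_\ell(E)\geq\rho(1-2^d\floor{\rho_1}^{-1})$.

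For the second part: Property \eqref{D1} is immediate, since every $E'\in\mc E_\ell(E)\subseteq\mc F_\ell(E)$ was defined to have side length $\rho_0^{(i)}R^{-(1+\tilde w_i)\ell}$, and this is exactly the prescription of \eqref{D1} in the range $\xi n_k^{(d)}<\ell\leq n_{k+1}$ (a subrange of $n_k^{(d)}<\ell\leq n_{k+1}$). For Property \eqref{P2}, I would argue by induction on $\ell$: for $\bs x\in E'$ and a rational $\bs r/s$ with $R^{\ell-1}\leq s<R^\ell$, if $\|s\bs x-\bs r\|_{\tilde{\bs w}}\leq\eps_0/s$ then $\bs r/s$ lies on $\pi_E$ and $\bs x\in N(\pi_E)$, forcing $E'\cap N(\pi_E)\neq\emptyset$, contradicting $E'\notin\mc B_\ell(E)$; thus $\|s\bs x-\bs r\|_{\tilde{\bs w}}>\eps_0/s$ for all $s$ in the new dyadic block $[R^{\ell-1},R^\ell)$, and for $s$ in the earlier range $[R^{n_k^{(d)}+1},R^{\ell-1})$ the bound holds because $E'\subseteq E\in\mc E_{\ell-1}$ already satisfies \eqref{P2} (the base case $\ell=n_k^{(d)}+2$ being handled where the block $[R^{n_k^{(d)}+1},R^{n_k^{(d)}+2})$ first appears).

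The main obstacle I expect is the bookkeeping in the counting step: making precise the claim that $N(\pi_E)$ is met by only $O(2^d)$ layers of subboxes per transverse column, keeping careful track of the anisotropic side lengths $\rho_0^{(i)}R^{-(1+\tilde w_i)\ell}$ versus the anisotropic thickness $\eps_0^{\tilde w_i}R^{-(\ell-1)(1+\tilde w_i)}$ of the slab, and absorbing all the $\floor{\cdot}$ rounding discrepancies into the single constant $2^d$. Everything else is either a direct appeal to Lemma \ref{lem:hyp} or a routine nesting/induction argument.
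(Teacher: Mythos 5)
Your plan is correct and follows essentially the same route as the paper: the thickness of $N(\pi_E)$ in each direction is at most one subbox side length (by the definition of $\varepsilon_0$), which gives $\#\mathcal{B}_{l}(E)\leq 2^d\rho\,\lfloor\rho_1\rfloor^{-1}$ via the transverse-column count, and \eqref{P2} follows by the same contradiction/induction argument (a putative $\bs r/s$ with $\|s\bs x-\bs r\|_{\tilde{\bs w}}\leq\varepsilon_0/s$ would lie on $\pi_E$ and force $E'\cap N(\pi_E)\neq\emptyset$). The only nuance worth fixing in the write-up is the phrase ``a factor $\rho_i$ smaller'': the relevant comparison is simply that the slab thickness $\varepsilon_0^{\tilde w_i}R^{-(\ell-1)(1+\tilde w_i)}$ is at most the subbox side $\rho_0^{(i)}R^{-(1+\tilde w_i)\ell}$, which is exactly what the choice of $\varepsilon_0$ guarantees.
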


\begin{proof}
For every $i=1,\dotsc,d$ the thickness of $N(\pi_E)$ in the $i$-th  direction is $$ \frac{\varepsilon_0^{\tilde{w}_i}}{R^{(l-1)(1+\tilde{w}_i)}}\leq \rho_0^{(i)}R^{-(1+\tilde{w_i})l},$$
where the inequality follows from the definition of $\varepsilon_0$.
Hence, $\#\mathcal{B}_l(E)\leq 2^d \prod_{i=2}^d \floor{R^{1+\tilde{w_i}}}=2^d \rho \floor{R^{1+\tilde{w_1}}}^{-1}$.
This implies that
$$\#\mathcal{E}_l(E)\geq \rho(1-2^d \floor{\rho_1}^{-1}).$$
Property \eqref{D1} obviously holds. Then, by the inductive hypothesis, we are left to show that for every box $E'\in \mathcal{E}_l(E)$ and $\bs x\in E'$
$$\|s\bs x-\bs r\|_{\tilde{w}}>\frac{\varepsilon_0}{s} \quad\text{ for every }\frac{\bs r}{s} \text{ with } R^{l-1}\leq s<R^l.$$
If not, there exists $\bs x\in E'$ and $\bs r/s\in \Q^d$ such that $R^{l-1}\leq s<R^{l}$ and $\|s\bs x-\bs r\|_{\tilde{\bs w}}\leq \varepsilon_0/s$. Since $E'\subseteq E$, we have $\bs r/s\in \pi_E$. By construction of $\mathcal{E}_l(E)$, $E'\cap N(\pi_E)=\emptyset$. Hence, $\bs x\notin N(\pi_E)$. Therefore, there exists $1\leq i\leq d$ such that 
$$\left|x_i-\frac{r_i}{s}\right|>\frac{\varepsilon_0^{\tilde{w}_i}}{R^{(l-1)(1+\tilde{w}_i)}},$$
which implies that $\|sx-r\|_{\tilde{\bs w}}>\varepsilon_0/s$ --- a contradiction.
\end{proof}

\bigskip

\noindent \textbf{Case 2}: $n_k<l\leq n_k^{(d)}$ for some $k\in \N$.

\bigskip
 
Let us assume first that $\ell=n_k+1$. Let $\bs z$ the center of $E$. By Lemma \ref{lem:nkbad} and the inductive hypothesis, we have that
\begin{equation*}
          \|s\bs z-\bs r\|_{\tilde{\bs w} }> \frac{\varepsilon_{k-1}}{s} \; \text{ for every } \frac{\bs r}{s}\in \Q^d \text{ with }\; 1\leq s< R^{n_k}.
\end{equation*}
Applying Lemma \ref{lem:pk/qk} with $\varepsilon=\varepsilon_{k-1}$, $M=R^{n_k}$, $u=\tilde{\bs w}$, $\bs x=\bs z$, and $\beta>\varepsilon_{k-1}^{-1}$, we find a rational vector
$$\frac{\bs p_k}{q_k}=\left(\frac{p_{k1}}{q_k},\dots, \frac{p_{kd}}{q_k}\right)\in \Q^d$$
such that 
\begin{equation}\label{eq:200}
R^{n_k}\leq q_k\leq \frac{1}{\varepsilon_{k-1}}R^{n_k}
\end{equation}
      and 
      \begin{equation}
      \label{eq:onequarter}
          \left|z_i-\frac{p_{ki}}{q_k}\right|\leq \frac{\varepsilon_{k-1}^{\tilde{w}_i}}{R^{(1+\tilde{w}_i) n_k}}\leq \frac{1}{4}\rho_0^{(i)}R^{-(1+\tilde{w}_i)n_k}  \quad \mbox{for } i=1,\dotsc,d,
      \end{equation}
where the second inequality follows from the definition of $\varepsilon_0$. Now note that, by \eqref{eq:200},
\begin{equation}
\label{eq:anotherquarter}
q_k^{1+\tau w_i}\geq R^{(1+\tau w_i)n_k}\geq  \frac{4 R^{(1+\tilde w_i)n_k}}{\rho_0^{(i)}},    
\end{equation}
where the last inequality follows form \eqref{eq:nk} and the fact that $\varepsilon_{k-1}^{-1}>\varepsilon_0^{-1}>4(\rho_0^{(i)})^{-1}$. Since the side length of $E$ is 
$\rho_0^{(i)}R^{-(1+\tilde w_i)n_k}$, by \eqref{eq:onequarter} and \eqref{eq:anotherquarter}, there exists $\bs y\in E$ such that
\begin{equation}\label{eq:100}
\left|y_i-\frac{p_k^{(i)}}{q_k}\right|=c_k \frac{1}{q_k^{1+\tau w_i}} \quad \mbox{for } i=1,\dotsc,d.
\end{equation}
Now, given that $\rho_0^{(i)}R^{-(1+\tilde{w}_i)n_k}>\rho_0^{(i)}R^{-(1+\tilde{w}_i)\min\{l, n_k^{(i)}\}}$, we may choose a box of side length $\rho_0^{(i)}R^{-(1+\tilde{w}_i)\min\{l, n_k^{(i)}\}}$ inside $E$ containing $\bs y$. We denote this box by $E_l(\bs y)$. Then we put 
\begin{equation}
\label{eq:elcase2}
\mathcal{E}_l(E):=\{E_l(\bs y)\}.    
\end{equation}
For $\ell>n_{k}+1$, we denote by $E_{\ell}(\bs y)$ any box of side lengths $\rho_0^{(i)}R^{-(1+\tilde{w}_i)\min\{l, n_k^{(i)}\}}$ such that $y\in E_{\ell}(\bs y)\subset E_{\ell-1}(\bs y)$. Again, we define $\mc E_{\ell}(E)$ as in \eqref{eq:elcase2}.

\begin{lemma}\label{lem:case2}
For $\mc E_{\ell}(E)$ as in \eqref{eq:elcase2}, we have that
$$\# \mc E_{\ell}(E)=1.$$
Moreover, for any $E'\in\mc E_{\ell}(E)$ Properties \eqref{D2} and \eqref{P1} hold.
\end{lemma}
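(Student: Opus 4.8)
The plan is to verify the three assertions in turn. The equality $\#\mc E_\ell(E)=1$ is forced by the definition \eqref{eq:elcase2}; all that remains is to check that the box $E_\ell(\bs y)$ is actually available, i.e.\ that a box with side lengths $\rho_0^{(i)}R^{-(1+\tilde w_i)\min\{\ell,n_k^{(i)}\}}$ containing $\bs y$ fits inside $E$ (when $\ell=n_k+1$), resp.\ inside $E_{\ell-1}(\bs y)$ (when $\ell>n_k+1$). Since $\bs y$ lies in the larger box --- this is exactly what \eqref{eq:onequarter}, \eqref{eq:anotherquarter} and \eqref{eq:100} establish in the Case~2 construction --- and the required side lengths do not exceed those of the larger box (because $R>1$, with equality in the already-frozen coordinates, where one simply keeps the previous box in that coordinate), such a box can always be positioned. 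Property \eqref{D2} is then immediate, as $E_\ell(\bs y)$ is \emph{defined} to have exactly the side lengths $\rho_0^{(i)}R^{-(1+\tilde w_i)\min\{\ell,n_k^{(i)}\}}$.

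Property \eqref{P1} only has to be checked when $\ell=n_k^{(d)}$; write $E'=E_{n_k^{(d)}}(\bs y)$, which by \eqref{D2} and the inequalities $n_k^{(i)}\le n_k^{(d)}$ of Lemma~\ref{lem:lemma5} has side lengths $\rho_0^{(i)}R^{-(1+\tilde w_i)n_k^{(i)}}$, and fix $\bs x\in E'$. I take $\bs p_k/q_k$ to be the rational vector produced in the Case~2 construction, so that \eqref{eq:p11} is nothing but \eqref{eq:200}. For \eqref{eq:p12} I estimate, for each $i$, $|x_i-p_k^{(i)}/q_k|\le|x_i-y_i|+|y_i-p_k^{(i)}/q_k|$: the first summand is at most the $i$-th side length $\rho_0^{(i)}R^{-(1+\tilde w_i)n_k^{(i)}}$ of $E'$, which by \eqref{eq:ckcond}, \eqref{eq:200} and $\rho_0^{(i)}<1$ is strictly smaller than $(1-c_k)q_k^{-(1+\tau w_i)}$, while the second summand equals $c_kq_k^{-(1+\tau w_i)}$ by \eqref{eq:100}. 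Hence $(2c_k-1)q_k^{-(1+\tau w_i)}\le|x_i-p_k^{(i)}/q_k|<q_k^{-(1+\tau w_i)}$ for every $i$; raising to the power $1/w_i$, multiplying by $q_k^{1/w_i}$, and taking the maximum over $i$ (using $w_1\le w_i$ and $2c_k-1<1$, so that even the coordinate $i=1$ yields the lower bound) gives \eqref{eq:p12}.

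The remaining assertion of \eqref{P1} --- that no rational $\bs r/s\neq\bs p_k/q_k$ with $R^{n_k}\le s<R^{n_k^{(1)}}$ satisfies $\|s\bs x-\bs r\|_{\bs w}\le s^{-\tau}$ --- is the substantial point. Arguing by contradiction, I would first convert the hypothesis to the auxiliary norm: since $\tilde w_i<\tau w_i$ for every $i$ (an elementary consequence of \eqref{tau1} and \eqref{tau2}) and $s\ge R^{n_k}$, the bounds $|sx_i-r_i|\le s^{-\tau w_i}$ give $\|s\bs x-\bs r\|_{\tilde{\bs w}}\le\varepsilon/s$ for some $\varepsilon$ that is a fixed positive power of $R^{-n_k}$ (so that, for $n_k$ large, $\varepsilon$ satisfies the hypotheses of Lemma~\ref{lem:hyp}). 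Because $E'$ has side lengths $\rho_0^{(i)}R^{-(1+\tilde w_i)n_k^{(i)}}$ with $n_k^{(i)}\ge n_k^{(1)}>n$ throughout the range $n_k\le n<n_k^{(1)}$, the Simplex Lemma (Lemma~\ref{lem:hyp}, applied with $\bs u=\tilde{\bs w}$ on each dyadic block $R^n\le s<R^{n+1}$) severely constrains all such ``bad'' rationals --- in particular both $\bs p_k/q_k$ (which lies in this window, since $q_k\le\varepsilon_{k-1}^{-1}R^{n_k}<R^{n_k^{(1)}}$ by \eqref{eq:nki}--\eqref{eq:nk}) and $\bs r/s$. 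Comparing the resulting affine constraint with the explicit position of $\bs p_k/q_k$, with \eqref{eq:200}, and with the fact that the window $[R^{n_k},R^{n_k^{(1)}})$ is short relative to the approximation quality $q_k^{-(1+\tau w_i)}$ --- which is exactly what the definition \eqref{eq:nki} of $n_k^{(1)}$ and the lower bound \eqref{eq:nk} on $n_k$ are designed to ensure --- one is forced to $\bs r=\bs p_k$ and $s=q_k$, a contradiction.

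I expect the main obstacle to be precisely this last step. For coordinates $i$ with $\tau w_i>1$ a crude two-rational comparison already suffices: if $p_k^{(i)}s\neq r_iq_k$ then $1/(q_ks)\le|p_k^{(i)}/q_k-r_i/s|<q_k^{-(1+\tau w_i)}+s^{-(1+\tau w_i)}$, and after multiplying by $q_ks$ and inserting $R^{n_k}\le q_k,s<R^{n_k^{(1)}}$ together with \eqref{eq:nk} and \eqref{eq:nki} this is impossible. The genuine difficulty lies in the coordinates $i\le K$, where $\tau w_i$ can be considerably smaller than $1$ and this one-coordinate-at-a-time estimate fails; here one must use the geometry-of-numbers input of Lemma~\ref{lem:hyp} in conjunction with the tight calibration of the $n_k^{(i)}$ coming from the auxiliary weight $\tilde{\bs w}$ --- whose very purpose is to make $\tfrac{1+\tau w_i}{1+\tilde w_i}$ constant on $i\le K$ --- rather than the direct difference argument.
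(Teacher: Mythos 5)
The first half of your argument (the cardinality, \eqref{D2}, and the derivation of \eqref{eq:p11}--\eqref{eq:p12} via the triangle inequality from \eqref{eq:100}, \eqref{eq:ckcond} and \eqref{eq:200}) is correct and coincides with the paper's proof.

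The final assertion of \eqref{P1} --- that $\bs p_k/q_k$ is the \emph{only} rational with denominator in $[R^{n_k},R^{n_k^{(1)}})$ satisfying $\|s\bs x-\bs r\|_{\bs w}\le s^{-\tau}$ --- is where your proposal has a genuine gap, and you essentially concede as much. Lemma~\ref{lem:hyp} only places all such bad rationals in a dyadic block on a common affine hyperplane; a hyperplane can contain many rationals with denominators in the given range, so "comparing the affine constraint with the position of $\bs p_k/q_k$" cannot force $\bs r/s=\bs p_k/q_k$. No quantitative uniqueness comes out of the Simplex Lemma, and your fallback two-rational comparison, as you note, only handles coordinates with $\tau w_i$ comfortably larger than $1$ (and even there only yields $p_{ki}s=r_iq_k$ coordinatewise, not equality of the full rational vectors).

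The paper closes this gap with a different geometry-of-numbers input: a lower bound on the \emph{second} successive minimum. One puts $\Lambda_{\bs x}=u_{\bs x}\mb Z^{d+1}$ and the box $\tilde K_k$ with sides $R^{-(\tilde w_i-\tilde w_1)n_k}R^{-\tau w_1 n_k}$ and height $R^{(1+\tau w_1-\tilde w_1)n_k}\varepsilon_{k-1}^d$. The inductive hypothesis (via Lemma~\ref{lem:nkbad}) shows $\lambda_1(K_k,\Lambda_{\bs x})>1$ for the "badly approximable" box $K_k$, whence by the upper bound in Minkowski's second theorem $\lambda_{d+1}(K_k,\Lambda_{\bs x})<\varepsilon_{k-1}^{-1}$ and therefore $\lambda_{d+1}(\tilde K_k,\Lambda_{\bs x})\le R^{(\tau w_1-\tilde w_1)n_k}\varepsilon_{k-1}^{-1}$; the point $(q_k\bs x-\bs p_k,q_k)$ gives $\lambda_1(\tilde K_k,\Lambda_{\bs x})\le 1$; and then the \emph{lower} bound in Minkowski's second theorem, together with $\vol(\tilde K_k)=2^{d+1}R^{-(d-1)(\tau w_1-\tilde w_1)n_k}\varepsilon_{k-1}^d$, yields $\lambda_2(\tilde K_k,\Lambda_{\bs x})>1$. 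Hence every lattice point of $\tilde K_k$ with denominator up to $R^{(1+\tau w_1-\tilde w_1)n_k}\varepsilon_{k-1}^d$ is a multiple of $(q_k\bs x-\bs p_k,q_k)$, and any other rational must violate the bound $|sx_i-r_i|\le R^{-\tau w_i n_k}$ in some coordinate; \eqref{eq:wave} guarantees this covers the whole window $s<R^{n_k^{(1)}}$. This transference step (first minimum small $\Rightarrow$ second minimum large) is the missing idea; without it, or a substitute of comparable strength, your proof does not go through.
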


\begin{proof}
The only non-trivial part of the statement is the validity of \eqref{P1}. Let $\bs x\in E_{n_k^{(d)}}(\bs y)$. By construction, $y\in E_{n_k^{(d)}}(\bs y)$ and $E_{n_k^{(d)}}(\bs y)$ has side lengths
$$\rho_0^{(i)}R^{-(1+\tilde{w}_i)\min\{n_k^{(d)}, n_k^{(i)}\}}=\rho_0^{(i)}R^{-(1+\tilde{w}_i)n_k^{(i)}}.$$
Hence, for every $i=1,\dotsc,d$
\begin{equation}\label{eq:101}
|x_i-y_i|\leq \rho_{0}^{(i)}R^{-(1+\tilde{w}_i)n_k^{(i)}}\overset{\eqref{eq:ckcond}}{\leq} (1-c_k)\varepsilon_{k-1}^{1+\tau w_i}R^{-(1+\tau w_i)n_k} \overset{\eqref{eq:200}}{\leq}(1-c_k)\frac{1}{q_k^{1+\tau w_i}}.
\end{equation}
By \eqref{eq:100} and \eqref{eq:101}, we have that
\begin{equation}\label{eq: 102}
    \left|x_i-\frac{p_{ki}}{q_k}\right|\leq |x_i-y_i|+ \left|y_i-\frac{p_{ki}}{q_k}\right|\leq \frac{1}{q_k^{1+\tau w_i}}
\end{equation}
and 
\begin{equation}
    \left|x_i-\frac{p_{ki}}{q_k}\right|\geq \left|y_i-\frac{p_{ki}}{q_k}\right|-|x_i-y_i|\geq (2c_k-1)\frac{1}{q_k^{1+\tau w_i}}.
\end{equation}
Hence, 
\begin{equation*}
   (2c_k-1)^{1/w_1}\frac{1}{q_k^{\tau}}\leq  \|q_k\bs x-p_k\|_{\bs w}\leq \frac{1}{q_k^{\tau}},
\end{equation*}
which gives \eqref{eq:p11} and \eqref{eq:p12}. Now, for fixed $\bs x\in E_{n_k^{(d)}}(\bs y)$ define the unimodular lattice $\Lambda_{\bs x}$ in $\R^{d+1}$ as 
\[
\Lambda_{\bs x} \;:=\;
\begin{pmatrix}
    \textup{I}_{d \times d} & \bs x_{d \times 1} \\[6pt]
    \bs{0}_{1\times d}              & 1
\end{pmatrix}
\mathbb{Z}^{\,d+1}.
\]
Consider the convex set
$$K_{k}:=\prod_{i=1}^d\left[-\frac{\varepsilon_{k-1}^{\tilde{w}_i}}{R^{n_k \tilde{w}_i}}, \frac{\varepsilon_{k-1}^{\tilde{w}_i}}{R^{n_k \tilde{w}_i}}\right]\times \left[-R^{n_k},R^{n_k}\right].$$
By Lemma \ref{lem:nkbad} and the inductive hypothesis,
$$\|s\bs x-\bs r\|_{\tilde{w}} > \frac{\varepsilon_{k-1}}{s}\mbox{ for every rational }\frac{\bs r}{s}\mbox{ with } 1\leq s\leq R^{n_k}.$$
Hence, $K_k$ does not contain any non-zero vector of $\Lambda_{\bs x}$. Therefore $\lambda_1(K_k,\Lambda_{\bs x})> 1$ and, by the upper bound in \eqref{eq:mink}, we deduce that $$\lambda_{d+1}(K_k,\Lambda_{\bs x})< \frac{2^{d+1}}{\vol(K_k)}=\frac{1}{\varepsilon_{k-1}}.$$
Hence, $\frac{1}{\varepsilon_{k-1}}K_k$ contains $d+1$ linearly independent vectors of the lattice $\Lambda_{\bs x}$.
Define
\begin{equation*}
    {\tilde K_k}:=\prod_{i=1}^{d}\left[-\frac{R^{-(\tilde{w}_i-\tilde{w}_1)n_k}}{R^{\tau w_1 n_k}}, \frac{R^{-(\tilde{w}_i-\tilde{w}_1)n_k}}{R^{\tau w_1 n_k}}\right] \times \left[-R^{(1+\tau w_1-\tilde{w}_1)n_k}\varepsilon_{k-1}^{d},R^{(1+\tau w_1-\tilde{w}_1)n_k}\varepsilon_{k-1}^d\right]
\end{equation*}
From \eqref{eq:nk}, we have that $R^{2(\tau w_1-\tilde w_1)n_k}\geq \varepsilon_{k-1}^{-d}$. Therefore,
$$\frac{1}{\varepsilon_{k-1}}K_k\subseteq \frac{R^{(\tau w_1-\tilde{w}_1)n_k}}{\varepsilon_{k-1}}{\tilde K_k}.$$
Hence, the convex set $\frac{R^{(\tau w_1-\tilde{w}_1)n_k}}{\varepsilon_{k-1}}{\tilde K_k}$ contains $d+1$ linearly independent vectors of the lattice $\Lambda_{\bs x}$. This implies that
\begin{equation}
\label{eq:lastmin}
\lambda_{d+1}({\tilde K_k},\Lambda_{\bs x})\leq \frac{R^{(\tau w_1-\tilde{w}_1)n_k}}{\varepsilon_{k-1}}.    
\end{equation}
Now, observe that $\tau w_1-\tilde w_1\leq \tau w_i-\tilde w_i$ for $i\geq 1$.
This follows from the fact that $(1+\tau w_1)/(1+\tilde{w}_1)\leq (1+\tau w_i)/(1+\tilde{w}_i)$. Hence,
%we have $\frac{\tau w_1-\tilde{w}_1}{1+\tilde{w}_1}\leq \frac{\tau w_i-\tilde{w}_i}{1+\tilde{w}_i}\leq \frac{\tau w_i-\tilde{w}_i}{1+\tilde{w}_1}$
\begin{equation}
\label{eq:neww}
\frac{1}{R^{\tau w_i n_k}}\leq \frac{R^{-(\tilde{w}_i-\tilde{w}_1)n_k}}{R^{\tau w_1 n_k}}.    
\end{equation}
By \eqref{eq:200}, \eqref{eq: 102}, \eqref{eq:neww}, and the fact that $R^{(\tau w_1-\tilde w_1)n_k}\geq \varepsilon_{k-1}^{-(d+1)}$, we deduce that $(q_k\bs x-\bs p_k,q_k)\in \Lambda_{\bs x}\cap {\tilde K_k}$ and thus
\begin{equation}
\label{eq:firstmin}
\lambda_1({\tilde K_k},\Lambda_{\bs x})\leq 1.    
\end{equation}
By the lower bound in \eqref{eq:mink}, we see that 
$$\lambda_1({\tilde K_k},\Lambda_{\bs x})\, \lambda_2({\tilde K_k},\Lambda_{\bs x})\,\lambda_{d+1}({\tilde K_k},\Lambda_{\bs x})^{d-1}\geq \frac{2^{(d+1)}}{(d+1)!\,\vol(\tilde K_k)}$$
Combining this with \eqref{eq:lastmin} and \eqref{eq:firstmin}, we find
$$\lambda_2({\tilde K_k},\Lambda_{\bs x}) \geq \frac{2^{d+1}\varepsilon_{k-1}^{d-1}}{(d+1)!\,\vol({\tilde K_k})\, R^{(\tau w_1-\tilde{w}_1)(d-1)n_k}}.$$
Now, note that
$$\vol({\tilde K_k})=2^{d+1}R^{-(d-1)(\tau w_1-\tilde{w}_1)n_k}\varepsilon_{k-1}^d.$$ Then
$$\lambda_2({\tilde K_k},\Lambda_{\bs x}) \geq \frac{1}{\varepsilon_{k-1}(d+1)!}>1,$$
which follows from the definition of $\varepsilon_0$.
Hence, for every $\bs r/s\neq \bs p_k/q_k$, with $s\leq R^{(1+\tau w_1-\tilde{w}_1)n_k}\varepsilon_{k-1}^d$, there exists $1\leq i \leq d$ such that
$$|sx_i-r_i|>\frac{R^{-(\tilde{w}_i-\tilde{w}_1)n_k}}{R^{\tau w_1 n_k}}\stackrel{\eqref{eq:neww}}{\geq} \frac{1}{R^{\tau w_i n_k}}.$$
If $s\geq R^{n_k}$, then $ R^{-\tau w_i n_k}\geq s^{-\tau w_i}$. This shows that $\eqref{P1}$ holds since, by \eqref{eq:wave}, $R^{n_k^{(1)}}\leq R^{(1+\tau w_1-\tilde{w}_1)n_k}\varepsilon_{k-1}^d$.
\end{proof}

\bigskip

\noindent\textbf{Case 3}: $l= n_{k}^{(d)}+1$ \text{ for some } $k\in \N$

\bigskip

Since $l-1= n_k^{(d)}$, by the inductive hypothesis, $E$ has 
side length
$\rho_{0}^{(i)}R^{-(1+\tilde{w}_i)n_k^{(i)}}$. We shall now define a region $A_k(E)\subseteq E$ which needs to be removed systematically in the subsequent steps to ensure that \eqref{P3} holds. Once $A_k(E)$ is constructed, we will proceed to define $\mc E_{\ell}(E)$.\\\\
\textbf{Construction of $A_k(E)$}: For $n_k^{(1)}\leq n\leq n_k^{(d)}+1$, define $\mathcal{I}_n(E)$ to be the collection of boxes obtained by subdividing $E$ from a fixed corner into boxes of side length
$$\rho_{0}^{(i)}R^{-(1+\tilde{w}_i)\max\{n,n_k^{(i)}\}}$$
and denote by $\mathcal{R}_n(E)$, the collection of remainder boxes at each level, i.e., those boxes (of potentially smaller size) that are left after $E$ is subdivided into an integer number of boxes of the required side length. Note that this time we are not iteratively subdividing: for each $n$ we subdivide the initial box $E$ and, in particular, we have that
$$E=\bigcup_{I\in\mc I_n(E)\cup\mc R_n(E)}I.$$
Then the side length of any box in $\mathcal{R}_n(E)$ is $< \rho_{0}^{(i)}R^{-(1+\tilde{w}_i)\max\{n,n_k^{(i)}\}}$ and 
\begin{align}\label{eq:card}
\#\bigl(\mathcal{I}_n(E)\cup \mathcal{R}_n(E)\bigr)
&\le \prod_{i=1}^d \left(
  \floor*{R^{(1+\tilde w_i)\,(\max\{n,n_k^{(i)}\}-n_k^{(i)})}} + 1
\right) \notag\\
&\le 2^d \prod_{i=1}^d
   R^{(1+\tilde w_i)\,(\max\{n,n_k^{(i)}\}-n_k^{(i)})}.
\end{align}
By Lemma \ref{lem:hyp} applied to each box $I\in \mathcal{I}_n(E) \cup \mathcal{R}_n(E)$ with $\varepsilon=R^{-n_k(\tau-1)}<\varepsilon_0$, the points
$$\left\{\frac{\bs r}{s}\in\Q^{d}: R^n\leq s<R^{n+1} \text{ and }\|s\bs x-\bs r\|_{\bs w}\leq \frac{1}{s^{\tau}} \text{ for some }\bs x\in I\right\}$$
lie on an affine hyperplane in $\R^d$, which we denote by $\pi_I$.
Now, consider the neighborhood of this plane given by
\begin{equation}
\label{eq:defnpi}
N_{\tau}(\pi_I):=\left\{\bs x\in I: \exists\, \bs z\in \pi_{I} \text{ such that }|x_i-z_i|\leq \frac{1}{R^{(1+\tau w_i)n}} \mbox{ for all } i=1,\dotsc,d\right\}.    
\end{equation}
Then
$$\vol (N_{\tau}(\pi_I))\leq \max_{j=1}^d\left\{R^{-(1+\tau w_j)n}\prod_{i\neq j}^d \rho_0^{(i)}R^{-(1+\tilde{w}_i)\max\{n,n_k^{(i)}\}}\right\}.$$
By \eqref{eq:volnpi} in Lemma \ref{lem:lemma5}, we have that
\begin{multline}
\label{eq:hypvol}
\max_{j=1}^d\left\{R^{-(1+\tau w_j)n}\prod_{i\neq j}^d \rho_0^{(i)}R^{-(1+\tilde{w}_i)\max\{n,n_k^{(i)}\}}\right\} \\
=R^{-(1+\tau w_1)n}\prod_{i=2}^d \rho_0^{(i)}R^{-(1+\tilde{w}_i)\max\{n,n_k^{(i)}\}}.    
\end{multline}
Thus, \eqref{eq:defnpi} and \eqref{eq:hypvol} imply that
 \begin{align}\label{eq:volntau}
     \vol (N_{\tau}(\pi_I))\leq &R^{-(1+\tau w_1)n}\prod_{i=2}^d \rho_0^{(i)}R^{-(1+\tilde{w}_i)\max\{n,n_k^{(i)}\}} \notag\\
     & =\frac{R^{-(\tau w_1-\tilde{w}_1)n}}{\rho_0^{(1)}}\prod_{i=1}^d \rho_0^{(i)}R^{-(1+\tilde{w}_i)\max\{n,n_k^{(i)}\}}.
 \end{align}
Define $$A_k^{(n)}(E):=\bigcup_{I\in \mathcal{I}_n(E)\cup \mathcal{R}_n(E)}N_{\tau}(\pi_I) \subseteq E$$
Let $$A_k(E):=\bigcup_{n=n_k^{(1)}}^{n_k^{(d)}+1} A_k^{(n)}(E) \quad \text{and}\quad  A_k:=\bigcup_{E\in \mathcal{E}_{n_k^{(d)}}}A_k(E)$$
%\textcolor{blue}{Draw a picture of $A_k^{(n)}(E)$ for $d=2$.}\\
By construction, if $\bs x\notin A_k$ then 
\begin{equation}
\label{eq:toproveP3}
\|s\bs x-\bs r\|_{\bs w}>\frac{1}{s^{\tau}} \quad \text{ for every }\frac{\bs r}{s} \text{ with } R^{n_k^{(1)}}\leq s<R^{n_k^{(d)}+2}.    
\end{equation}
To see this, just note that if $\|s\bs x-\bs r\|_{\bs w}\leq s^{-\tau}$ for some $R^{n}\leq s <R^{n+1}$ with $n_k^{(1)}\leq n <k^{(d)}+1$, since $\bs x\in I$ for some $\mc I_n$, it must be $\bs r/s\in\pi_I$ so that $\bs x\in N_{\tau}(\pi_I)$ --- a contradiction.

The region $A_k$ is a union of small trapezoids inside $E$. We shall now construct a collection $\mathcal{A}_k$ of boxes which are ``small enough" to fit inside these trapezoids.\\
Denote by $\mathcal{J}_{{\xi} n_k^{(d)}}(E)$, the collection obtained by first subdividing $E$ into boxes of side length $\rho_0^{(i)}R^{-(1+\tilde{w}_i)(n_k^{(d)}+1)}$ and then iteratively subdividing each of these boxes into boxes of side length $\rho_0^{(i)}R^{-(1+\tilde{w}_i)n}$ until we reach side length $\rho_0^{(i)}R^{-(1+\tilde{w}_i){\xi} n_k^{(d)}}$. Then
$$\# \mathcal{J}_{{\xi} n_k^{(d)}}(E)= \rho^{{\xi} n_k^{(d)}-n_k^{(d)}-1}\prod_{i=1}^d \floor*{R^{(1+\tilde{w}_i)(n_k^{(d)}+1-n_k^{(i)})}},$$
where $\rho=\prod_{i=1}^d \floor{R^{1+\tilde{w}_i}}$.
Define
$$\mathcal{A}_k(E):=\{J\in \mathcal{J}_{{\xi} n_k^{(d)}}(E): J \cap A_k(E)\neq \emptyset \}$$
and let $$\mathcal{A}_k:=\bigcup_{E\in \mathcal{E}_{n_k^{(d)}}}\mathcal{A}_k(E).$$

\noindent
\textbf{Construction of $\mathcal{E}_{n_k^{(d)}+1}(E)$}: 
Subdivide $E$ into boxes of side length $\rho_0^{(i)}R^{-(1+\tilde{w}_i)(n_k^{(d)}+1)}$ and denote this collection by $\mathcal{F}_{n_k^{(d)}+1}(E)$. Then
\begin{equation}\label{eq:card1}
    \# \mathcal{F}_{n_k^{(d)}+1}(E)=\prod_{i=1}^{d}\floor*{R^{(1+\tilde{w}_i)(n_k^{(d)}+1-n_k^{(i)})}}\geq \frac{1}{2^d}R^{(d+1)(n_k^{(d)}+1)}\prod_{i=1}^d R^{-(1+\tilde{w}_i)n_k^{(i)}}.
\end{equation}
Fix $0<\varepsilon<\varepsilon_0$ and assume that $R\geq R(\varepsilon)$ as in Lemma \ref{lem:newlem}. Let
$$\mathcal{B}'_{n_k^{(d)}+1}(E):=\left\{F\in \mathcal{F}_{n_k^{(d)}+1}(E): \#\{J\in \mathcal{A}_k: J\subseteq F\}\geq \rho^{({\xi} n_k^{(d)}-n_k^{(d)}-1)(1-\varepsilon/2)} \right\},$$
where $\rho:=\prod_{i=1}^d\floor*{R^{1+\tilde{w}_i}}$, and define
\begin{equation}
\label{eq:elcase3}
\mathcal{E}_{n_k^{(d)}+1}(E)=\mathcal{F}_{n_k^{(d)}+1}(E)\setminus \mathcal{B}'_{n_k^{(d)}+1}(E).    
\end{equation}

\begin{lemma}\label{lem:case3}
For $\mathcal{E}_{n_k^{(d)}+1}(E)$ as in \eqref{eq:elcase3}, we have that
$$\# \mathcal{E}_{n_k^{(d)}+1}(E)\geq \frac{1}{2}\prod_{i=1}^d \floor*{R^{(1+\tilde{w_i})(n_k^{(d)}+1-n_k^{(i)})}}.$$
Moreover, for any $E'\in \mathcal{E}_{n_k^{(d)}+1}(E)$ Property \eqref{D1} holds, while the other properties are vacuous.    
\end{lemma}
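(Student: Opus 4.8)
Every box of $\mathcal{E}_{n_k^{(d)}+1}(E)=\mathcal{F}_{n_k^{(d)}+1}(E)\setminus\mathcal{B}'_{n_k^{(d)}+1}(E)$ is a sub-box of $E$ of side length $\rho_0^{(i)}R^{-(1+\tilde w_i)(n_k^{(d)}+1)}$, so nestedness and Property \eqref{D1} are automatic, while \eqref{P1}--\eqref{P3} impose nothing at level $\ell=n_k^{(d)}+1$; thus the content of the lemma is the cardinality bound, which is equivalent to $\#\mathcal{B}'_{n_k^{(d)}+1}(E)\le\tfrac12\,\#\mathcal{F}_{n_k^{(d)}+1}(E)$. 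Set $\alpha_k:=\xi n_k^{(d)}-n_k^{(d)}-1$. From the construction, $\mathcal{J}_{\xi n_k^{(d)}}(E)$ refines $\mathcal{F}_{n_k^{(d)}+1}(E)$ with each $F\in\mathcal{F}_{n_k^{(d)}+1}(E)$ containing exactly $\rho^{\alpha_k}$ of its boxes, and a box $J\subseteq F\subseteq E$ of $\mathcal{J}_{\xi n_k^{(d)}}(E)$ lies in $\mathcal{A}_k$ if and only if it lies in $\mathcal{A}_k(E)$ (the box of $\mathcal{E}_{n_k^{(d)}}$ containing $J$ must be $E$). Summing the inequality defining $\mathcal{B}'_{n_k^{(d)}+1}(E)$ over its members, and using that distinct $F$'s are disjoint, gives $\#\mathcal{B}'_{n_k^{(d)}+1}(E)\cdot\rho^{\alpha_k(1-\varepsilon/2)}\le\#\mathcal{A}_k(E)$. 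Hence it is enough to prove
\[
\#\mathcal{A}_k(E)\ \le\ \tfrac12\,\#\mathcal{F}_{n_k^{(d)}+1}(E)\cdot\rho^{\alpha_k(1-\varepsilon/2)}.
\]

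To bound $\#\mathcal{A}_k(E)$, I would use $\mathcal{A}_k(E)\subseteq\bigcup_{n=n_k^{(1)}}^{n_k^{(d)}+1}\bigcup_{I\in\mathcal{I}_n(E)\cup\mathcal{R}_n(E)}\{J\in\mathcal{J}_{\xi n_k^{(d)}}(E):J\cap N_\tau(\pi_I)\neq\emptyset\}$, so that $\#\mathcal{A}_k(E)\le\sum_n\sum_I\#\{J:J\cap N_\tau(\pi_I)\neq\emptyset\}$. For a fixed $I$, Lemma \ref{lem:newlem1} ensures that the side length $\rho_0^{(i)}R^{-(1+\tilde w_i)\xi n_k^{(d)}}$ of $J$ in each direction is smaller than the half-thickness $R^{-(1+\tau w_i)n}$ of the slab $N_\tau(\pi_I)$ (and is tiny compared with the side lengths of $I$), so enlarging $N_\tau(\pi_I)$ by the dimensions of $J$ multiplies its volume by at most an $R$-independent constant; hence $\#\{J:J\cap N_\tau(\pi_I)\neq\emptyset\}$ is at most a constant times $\vol(N_\tau(\pi_I))/\prod_i\rho_0^{(i)}R^{-(1+\tilde w_i)\xi n_k^{(d)}}$. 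Inserting the volume bound \eqref{eq:volntau} and the count \eqref{eq:card}, the factors involving $\max\{n,n_k^{(i)}\}$ cancel, and since $\sum_i(1+\tilde w_i)=d+1$ one is left with the sum $\sum_{n=n_k^{(1)}}^{n_k^{(d)}+1}R^{-(\tau w_1-\tilde w_1)n}$ times $(\rho_0^{(1)})^{-1}R^{(d+1)\xi n_k^{(d)}}\prod_iR^{-(1+\tilde w_i)n_k^{(i)}}$. Because $\tau w_1-\tilde w_1>0$ (a defining property of the auxiliary weight $\tilde{\bs w}$ supplied by Lemma \ref{lem:Rynne}), the geometric sum over $n$ is bounded by a constant independent of $k$, and we conclude
\[
\#\mathcal{A}_k(E)\ \ll\ \frac{R^{-(\tau w_1-\tilde w_1)n_k^{(1)}}}{\rho_0^{(1)}}\,R^{(d+1)\xi n_k^{(d)}}\prod_{i=1}^d R^{-(1+\tilde w_i)n_k^{(i)}}.
\]

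To finish, I would plug this into the target inequality using the lower bound \eqref{eq:card1}, namely $\#\mathcal{F}_{n_k^{(d)}+1}(E)\ge 2^{-d}R^{(d+1)(n_k^{(d)}+1)}\prod_iR^{-(1+\tilde w_i)n_k^{(i)}}$. The products $\prod_iR^{-(1+\tilde w_i)n_k^{(i)}}$ cancel, the ratio $R^{(d+1)\xi n_k^{(d)}}/R^{(d+1)(n_k^{(d)}+1)}$ equals $R^{(d+1)\alpha_k}$, and everything reduces to an inequality of the shape
\[
C\cdot(\rho_0^{(1)})^{-1}R^{-(\tau w_1-\tilde w_1)n_k^{(1)}}R^{(d+1)\alpha_k}\ \le\ \rho^{\alpha_k(1-\varepsilon/2)}
\]
with $C$ a constant independent of $k$. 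The first estimate of Lemma \ref{lem:newlem} is exactly the statement that $(\rho_0^{(1)})^{-1}R^{-(\tau w_1-\tilde w_1)n_k^{(1)}}R^{(d+1)\alpha_k}\le 2^{-(3d+1)}\rho^{\alpha_k(1-\varepsilon)}$, so the displayed inequality holds provided $\rho^{\alpha_k\varepsilon/2}\ge C$, which is guaranteed for every $k$ once $R$ is taken large enough (this being the purpose of the second estimate of Lemma \ref{lem:newlem}). This yields $\#\mathcal{B}'_{n_k^{(d)}+1}(E)\le\tfrac12\#\mathcal{F}_{n_k^{(d)}+1}(E)$ and hence the asserted lower bound on $\#\mathcal{E}_{n_k^{(d)}+1}(E)$.

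I expect the main obstacle to be the passage in the second paragraph from a volume estimate for the union of thin trapezoids $A_k(E)$ to a genuine box count for the collection $\mathcal{J}_{\xi n_k^{(d)}}(E)$. This is legitimate only because of the scale separation $\xi n_k^{(d)}\gg n_k^{(d)}$ hard-wired into the parameters (Lemma \ref{lem:newlem1}), which makes the thickening cost negligible, and because the sum over the slab index $n$ stays bounded --- which is precisely why $\tilde{\bs w}$ must be chosen with $\tilde w_1<\tau w_1$. Everything else is bookkeeping calibrated so that the final estimate matches Lemma \ref{lem:newlem}.
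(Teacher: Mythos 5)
Your proposal is correct and follows essentially the same route as the paper: reduce to bounding $\#\mathcal{B}'_{n_k^{(d)}+1}(E)$ by half of $\#\mathcal{F}_{n_k^{(d)}+1}(E)$, count $\#\mathcal{A}_k(E)$ by comparing the volume of each slab $N_\tau(\pi_I)$ (enlarged by the $J$-box dimensions, legitimate by Lemma \ref{lem:newlem1}) to the volume of a $\mathcal{J}_{\xi n_k^{(d)}}$-box, sum over $I$ and over $n$ using \eqref{eq:card}, \eqref{eq:volntau} and the geometric decay in $R^{-(\tau w_1-\tilde w_1)n}$, and close with both estimates of Lemma \ref{lem:newlem}. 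The only difference is presentational: you run the pigeonhole step directly (summing the defining inequality of $\mathcal{B}'$ over its members), whereas the paper phrases the same step as a proof by contradiction.
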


\begin{proof}
The latter part of the statement is obvious. Thus, it is enough to prove the lower bound on the cardinality of $\# \mathcal{E}_{n_k^{(d)}+1}(E)$.

We shall first find an upper bound for $\# \mathcal{A}_k(E)$. Fix $n_k^{(1)}\leq n \leq n_k^{(d)}$ and $I\in \mathcal{I}_n(E)\cup \mathcal{R}_n(E)$. For every $1\leq i\leq d$ the side length of any box $J\in \mathcal{J}_{{\xi} n_k^{(d)}}(E)$ in the $i$-th direction is smaller than the thickness of the neighborhood $N_{\tau}(\pi_I)$ in that direction. This follows from the fact that
$$\rho_0^{(1)}R^{-(1+\tilde w_1)\xi n_k^{(d)}}<R^{-(1+\tau w_d)(n_k^{(d)}+1)}$$
(cf. Lemma \ref{lem:newlem1}). Then we have that
$$\#\{J\in \mathcal{J}_{{\xi} n_k^{(d)}}(E): J \cap N_{\tau}(\pi_I)\neq \emptyset \}\leq  2^d \frac{\vol (N_{\tau}(\pi_I))}{R^{-(d+1){\xi} n_k^{(d)}}\prod_{i=1}^d \rho_0^{(i)}}  $$
Hence,
\begin{align*}
    \#\{J\in \mathcal{J}_{{\xi} n_k^{(d)}}(E): & J \cap A_k^{(n)}(E)\neq \emptyset \} \leq  \frac{2^d}{R^{-(d+1){\xi} n_k^{(d)}}\prod_{i=1}^d \rho_0^{(i)}}\sum_{I\in \mathcal{I}_n(E)\cup \mathcal{R}_n(E)} \vol(N_{\tau}(\pi_I)) \\
    & \overset{\eqref{eq:card},\eqref{eq:volntau}}{\leq }  \frac{2^{2d} \prod_{i=1}^d R^{-(1+\tilde{w}_i)n_k^{(i)}}}{\rho_0^{(1)}R^{(\tau w_1-\tilde{w}_1)n} R^{-(d+1){\xi} n_k^{(d)}}}.
\end{align*}
Therefore,
\begin{align*}
    \#\{J\in \mathcal{J}_{{\xi} n_k^{(d)}}(E): J \cap A_k(E)\neq \emptyset \}
    &\leq \sum_{n=n_k^{(1)}}^{n_k^{(d)}+1} \frac{2^{2d} \prod_{i=1}^d R^{-(1+\tilde{w}_i)n_k^{(i)}}}{\rho_0^{(1)}R^{(\tau w_1-\tilde{w}_1)n} R^{-(d+1){\xi} n_k^{(d)}}} \\
    &\leq  \frac{2^{2d+1} \prod_{i=1}^d R^{-(1+\tilde{w}_i)n_k^{(i)}}}{\rho_0^{(1)}R^{(\tau w_1-\tilde{w}_1)n_k^{(1)}} R^{-(d+1){\xi} n_k^{(d)}}}\\
    &\leq  \# \mathcal{F}_{n_k^{(d)}+1}(E) \rho^{({\xi} n_k^{(d)}-n_k^{(d)}-1)(1-\varepsilon)},
\end{align*}
where in the last inequality we used the fact that, by Lemma \ref{lem:newlem},
$$\frac{2^{2d+1} \prod_{i=1}^d R^{-(1+\tilde{w}_i)n_k^{(i)}}}{\rho_0^{(1)}R^{(\tau w_1-\tilde{w}_1)n_k^{(1)}} R^{-(d+1){\xi} n_k^{(d)}}}\cdot \frac{R^{(d+1)(n_k^{(d)}+1)}}{R^{(d+1)(n_k^{(d)}+1)}}\overset{\eqref{eq:card1}}{\leq }\frac{\# \mathcal{F}_{n_k^{(d)}+1}(E)}{\rho^{-({\xi} n_k^{(d)}-n_k^{(d)}-1)(1-\varepsilon)}}.$$
This gives
\begin{equation}
\label{eq:34}
\# \mathcal{A}_k(E)\leq  \# \mathcal{F}_{n_k^{(d)}+1}(E) \rho^{({\xi} n_k^{(d)}-n_k^{(d)}-1)(1-\varepsilon)}.   
\end{equation}
Now, assume by contradiction that
\begin{equation}
\label{eq:contr}
\#\mathcal{B}'_{n_k^{(d)}+1}(E) > \frac{1}{2}\# \mathcal{F}_{n_k^{(d)}+1}(E).    
\end{equation}
Then
\begin{equation}\label{eq:cardak}
    \#\mathcal{A}_k(E)>\frac{1}{2}\# \mathcal{F}_{n_k^{(d)}+1}(E) \rho^{({\xi} n_k^{(d)}-n_k^{(d)}-1)(1-\varepsilon/2)}.
\end{equation}
Now, by Lemma \ref{lem:newlem}, we have that $\rho^{-({\xi} n_k^{(d)}-n_k^{(d)}-1)\varepsilon/2}\leq 1/2$. Thus, \eqref{eq:cardak} implies that
$$\#\mathcal{A}_k(E)>\frac{1}{2}\# \mathcal{F}_{n_k^{(d)}+1}(E) \rho^{({\xi} n_k^{(d)}-n_k^{(d)}-1)(1-\varepsilon/2)}\geq \# \mathcal{F}_{n_k^{(d)}+1}(E)\rho^{({\xi} n_k^{(d)}-n_k^{(d)}-1)(1-\varepsilon)}$$
--- a contradiction to \eqref{eq:34}. This means that the opposite of \eqref{eq:contr} holds and
\begin{multline*}
\#\mathcal{E}_{n_k^{(d)}+1}(E)=\# \mathcal{F}_{n_k^{(d)}+1}(E)-\#\mathcal{B}'_{n_k^{(d)}+1}(E)\geq \frac{1}{2}\# \mathcal{F}_{n_k^{(d)}+1}(E) \\
\overset{\eqref{eq:card1}}{=}\frac{1}{2}\prod_{i=1}^d \floor*{R^{(1+\tilde{w_i})(n_k^{(d)}+1-n_k^{(i)})}},    
\end{multline*}
concluding the proof.
\end{proof}

\bigskip

\noindent
\textbf{Case 4}: $n_{k}^{(d)}+2\leq \ell \leq {\xi} n_{k}^{(d)}$ for some $k\in \N$

\bigskip

By the inductive hypothesis, $E$ has side length $\rho_0^{(i)}R^{-(1+\tilde{w}_i)(l-1)}$. Denote by $\mathcal{F}_l(E)$, the collection of boxes obtained by subdividing $E$ from a fixed corner into boxes of side length $\rho_0^{(i)}R^{-(1+\tilde{w}_i)l}$. Then
$$\# \mathcal{F}_l(E)= \prod_{i=1}^d \floor*{R^{(1+\tilde{w_i})}}=\rho.$$
By Lemma \ref{lem:hyp} for the box $E$ with $\bs u=\tilde{\bs w}$, $n=l-1$, and $\varepsilon =\varepsilon_0$, we have that the points
$$\left\{\frac{\bs r}{s}\in \Q^{d}: R^{l-1}\leq s<R^{l} \quad\text{ and } \|s\bs x-\bs r\|_{\tilde{\bs w}}\leq \frac{\varepsilon_0}{s} \text{ for some } \bs x\in E\right\}$$ 
lie on an affine plane in $\R^{d}$, say $\pi_E$. Consider the neighborhood of this plane defined by 
$$N(\pi_E):=\left\{\bs x\in E: \exists\, \bs z\in \pi_{E} \text{ such that }|x_i-z_i|\leq \frac{\varepsilon_0^{\tilde{w}_i}}{R^{(l-1)(1+\tilde{w}_i)}} \mbox{ for } i=1,\dotsc,d\right\}.$$
Define
$$\mathcal{B}_{l}(E):=\{F\in \mathcal{F}_{l}(E): F\cap N(\pi_E)\neq \emptyset\}$$
and 
$$\mathcal{B}'_{l}(E):=\left\{F\in \mathcal{F}_{l}(E): \#\{J\in \mathcal{A}_k: J\subseteq F\}\geq \rho^{({\xi} n_k^{(d)}-l)(1-\varepsilon/2)} \right\}$$
and then define
\begin{equation}
\label{eq:elcase4}
\mathcal{E}_{l}(E):=\mathcal{F}_{l}(E)\setminus \left(\mathcal{B}_{l}(E)\cup \mathcal{B}'_{l}(E)\right).    
\end{equation}

\begin{lemma}\label{lem:case4}
For $\mc E_{\ell}(E)$ as in \eqref{eq:elcase4}, we have that
$$\# \mc E_{\ell}(E)\geq \rho(1-\rho^{-\varepsilon/2}-2^d \floor{\rho_1}^{-1}).$$
Moreover, for any $E'\in\mc E_{\ell}(E)$ Properties \eqref{D1}, \eqref{P2}, and \eqref{P3} hold.    
\end{lemma}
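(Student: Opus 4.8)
The plan is to bound separately the sizes of the two families $\mathcal B_\ell(E)$ and $\mathcal B'_\ell(E)$ of boxes discarded from $\mathcal F_\ell(E)$, and then to check the three pointwise properties. Since the properties at level $\ell$ will use those at level $\ell-1$, I would set the whole thing up as an induction on $\ell$ over the range $n_k^{(d)}+2\le \ell\le \xi n_k^{(d)}$.

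For $\#\mathcal B_\ell(E)$ I would repeat the estimate of Lemma~\ref{lem:case1} essentially verbatim: the half-thickness of $N(\pi_E)$ in the $i$-th coordinate is $\varepsilon_0^{\tilde w_i}R^{-(\ell-1)(1+\tilde w_i)}$, which by the definition of $\varepsilon_0$ is $\le\rho_0^{(i)}R^{-(1+\tilde w_i)\ell}$, so the slab $N(\pi_E)$ meets at most $2^d\prod_{i=2}^d\floor{R^{1+\tilde w_i}}=2^d\rho\floor{\rho_1}^{-1}$ of the boxes of $\mathcal F_\ell(E)$; hence $\#\mathcal B_\ell(E)\le 2^d\rho\floor{\rho_1}^{-1}$. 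For $\#\mathcal B'_\ell(E)$ the crucial observation is that the parent box $E$ lies in $\mathcal E_{\ell-1}$, hence was not placed in $\mathcal B'_{\ell-1}(\cdot)$ by the construction one step earlier (this is Case~3 when $\ell-1=n_k^{(d)}+1$ and Case~4 otherwise, the two thresholds matching at $\ell-1=n_k^{(d)}+1$), so that
$$\#\{J\in\mathcal A_k:\ J\subseteq E\}<\rho^{(\xi n_k^{(d)}-\ell+1)(1-\varepsilon/2)}.$$
Since the boxes of $\mathcal F_\ell(E)$ are pairwise disjoint and each $F\in\mathcal B'_\ell(E)$ contains at least $\rho^{(\xi n_k^{(d)}-\ell)(1-\varepsilon/2)}$ boxes of $\mathcal A_k$, an averaging argument gives $\#\mathcal B'_\ell(E)\le\rho^{1-\varepsilon/2}=\rho\cdot\rho^{-\varepsilon/2}$. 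Combining the two bounds yields $\#\mathcal E_\ell(E)\ge\rho-2^d\rho\floor{\rho_1}^{-1}-\rho\cdot\rho^{-\varepsilon/2}$, which is the claimed inequality.

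For the pointwise properties, \eqref{D1} is immediate from the definition of $\mathcal F_\ell(E)$, and $n_k^{(d)}<\ell\le n_{k+1}$ holds because $\xi n_k^{(d)}<n_{k+1}$. For \eqref{P2} I would split the range $R^{n_k^{(d)}+1}\le s<R^\ell$ at $R^{\ell-1}$: on $[R^{\ell-1},R^\ell)$, Lemma~\ref{lem:hyp} puts every rational $\bs r/s$ that is $\varepsilon_0/s$-close (in $\tilde{\bs w}$-norm) to a point of $E$ onto $\pi_E$, and then the same contradiction argument as in Lemma~\ref{lem:case1} (using $E'\cap N(\pi_E)=\emptyset$) excludes such approximations for $\bs x\in E'$; on $[R^{n_k^{(d)}+1},R^{\ell-1})$ the statement is empty when $\ell=n_k^{(d)}+2$ and otherwise follows, via $E'\subseteq E$, from Property~\eqref{P2} for $E\in\mathcal E_{\ell-1}$, available by the inductive hypothesis since $\ell-1$ then also lies in Case~4. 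Finally, \eqref{P3} is asserted only when $\ell=\xi n_k^{(d)}$; there the exponent $(\xi n_k^{(d)}-\ell)(1-\varepsilon/2)$ vanishes, so $\mathcal B'_\ell(E)$ consists of those $F\in\mathcal F_\ell(E)$ that contain some $J\in\mathcal A_k$ with $J\subseteq F$. Because the boxes of $\mathcal F_\ell(E)$ and of $\mathcal J_{\xi n_k^{(d)}}(\hat E)$ (with $\hat E\in\mathcal E_{n_k^{(d)}}$ the ancestor of $E$) have identical side lengths and, the subdivisions being taken from a consistent corner, $\mathcal F_\ell(E)\subseteq \mathcal J_{\xi n_k^{(d)}}(\hat E)$, this forces $\mathcal B'_\ell(E)=\mathcal F_\ell(E)\cap\mathcal A_k$; hence no $E'\in\mathcal E_\ell(E)$ lies in $\mathcal A_k$, so $E'\cap A_k=\emptyset$ and every $\bs x\in E'$ satisfies \eqref{eq:toproveP3}, which in particular yields \eqref{P3}.

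The step I expect to be the main obstacle is the bound on $\#\mathcal B'_\ell(E)$: one has to propagate the ``$\mathcal A_k$-density budget'' correctly from level to level and check that the thresholds fixed in Cases~3 and~4 dovetail at the transition level $\ell-1=n_k^{(d)}+1$, and, for \eqref{P3}, that at the top level $\ell=\xi n_k^{(d)}$ the budget collapses to $1$, so that discarding $\mathcal B'_\ell(E)$ removes exactly the boxes meeting the trapezoidal region $A_k$. The remaining estimates are routine and parallel Lemmas~\ref{lem:case1} and~\ref{lem:case3}.
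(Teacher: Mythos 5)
Your proposal is correct and follows essentially the same route as the paper: the bound on $\#\mathcal B_\ell(E)$ is the Lemma~\ref{lem:case1} slab estimate, the bound on $\#\mathcal B'_\ell(E)$ is the paper's contradiction argument run forwards (the parent survived level $\ell-1$, so its $\mathcal A_k$-count is below $\rho^{(\xi n_k^{(d)}-\ell+1)(1-\varepsilon/2)}$, and disjointness of the $F$'s gives $\#\mathcal B'_\ell(E)\le\rho^{1-\varepsilon/2}$), and \eqref{P3} is obtained exactly as in the paper by noting the threshold collapses to $1$ at $\ell=\xi n_k^{(d)}$ so that $\mathcal B'_{\xi n_k^{(d)}}(E)=\mathcal F_{\xi n_k^{(d)}}(E)\cap\mathcal A_k$ and \eqref{eq:toproveP3} applies. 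Your explicit check that the Case~3 and Case~4 thresholds dovetail at $\ell-1=n_k^{(d)}+1$ is a point the paper leaves implicit, but the argument is the same.
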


\begin{proof}
By the same argument as in the proof of Lemma \ref{lem:case1}, we have that
$$\# \mathcal{B}_l(E)\leq 2^d \prod_{i=2}^d \floor{\rho_i}.$$
Now, recall that for every box $E'$ in $\mathcal{B}'_{l}(E)$ it holds that
\begin{equation}\label{eq:card5}
    \#\{J\in \mathcal{A}_k : J\subseteq E'\}\geq \rho^{({\xi} n_k^{(d)}-l)(1-\varepsilon/2)}.
\end{equation}
Assume by contradiction that $\# \mathcal{B}_l'(E)\geq \rho^{1-\varepsilon/2}$. Then we have that
$$\#\{J\in \mathcal{A}_k : J\subseteq E\}\geq \rho^{(1-\varepsilon/2)}\rho^{({\xi} n_k^{(d)}-l)(1-\varepsilon/2)} = \rho^{({\xi} n_k^{(d)}-l+1)(1-\varepsilon/2)}.$$
This implies that $E\in \mc B_{\ell-1}'(E'')$, where $E''$ is the parent box of $E$. But this contradicts \eqref{eq:elcase4} for the box $E''$ at the level $\ell-1$ or \eqref{eq:elcase3} if $\ell-1=n_k^{(d)}+1$. Then
$$\# \mathcal{E}_{l}(E)\geq \rho-\rho^{1-\varepsilon/2}-2^d\prod_{i=2}^d\lfloor\rho_i\rfloor.$$

We are now left to verify Properties \eqref{P2} and \eqref{P3}. Property \eqref{P2} holds by the same argument as in the proof of lemma \ref{lem:case1}. For \eqref{P3}, we first observe that for the unique box $E_0\in \mathcal{E}_{n_k^{(d)}}$ such that $E\subseteq E_0$ it holds that
$$\mathcal{F}_{{\xi} n_k^{(d)}}(E)=\{E'\in \mathcal{J}_{{\xi} n_k^{(d)}}(E_0):E'\subseteq E\}.$$
Therefore,
\begin{multline*}
\mathcal{B}_{{\xi} n_k^{(d)}}'(E)=\left\{F\in \mathcal{F}_{{\xi} n_k^{(d)}}(E): \#\{J\in \mathcal{A}_k: J\subseteq F\}\geq \rho^{({\xi} n_k^{(d)}-{\xi} n_k^{(d)})(1-\varepsilon/2)}=1 \right\} \\
=\mathcal{A}_k\cap \mathcal{F}_{{\xi} n_k^{(d)}}(E).    
\end{multline*}
Then $\mathcal{A}_k \cap \mathcal{E}_{{\xi} n_k^{(d)}}(E)=\emptyset$ and the validity of $\eqref{P3}$ follows from \eqref{eq:toproveP3}.
\end{proof}

\section{Hausdorff Dimension of $E_{\infty}$}
% \subsection{General results}

We start by recalling \cite[Proposition 1.7]{Fal14}.

\begin{lemma}[Mass-Distribution Principle]
\label{lem:mdp}
Let $\mu$ be a compactly-supported probability measure on $\mb R^d$. Assume that there exist constants $0<\ell_0<1$ and $0\leq \beta\leq d$ such that for any $0<\ell\leq \ell_0$ and any hyper-cube $B\subset\mb R^d$ of side length $\ell$
$$\mu(B)\leq \ell^{\beta}.$$
Then any set $F\subset \mb R^d$ such that $\mu(F)>0$ satisfies
$$\dimH F\geq \beta.$$
% where $\dim$ denotes the Hausdorff dimension.
\end{lemma}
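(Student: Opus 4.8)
The plan is to read off the bound $\dimH F\ge\beta$ directly from the definition of Hausdorff measure, by showing that the density hypothesis forces $\cH^{\beta}(F)>0$; this is the standard Mass Distribution (Frostman-type) argument. We may assume $\beta>0$, since $\dimH F\ge 0$ holds trivially.

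First I would record the only geometric input: a subset of $\mb R^d$ of diameter $r>0$ is contained in some closed axis-parallel hyper-cube of side length $r$, obtained by projecting onto each coordinate axis (a map that does not increase the diameter) and taking the product of the resulting bounded intervals. Combined with the hypothesis of the lemma, this gives $\mu(U)\le(\operatorname{diam} U)^{\beta}$ for every $U$ with $0<\operatorname{diam} U\le\ell_0$; moreover, letting $\ell\to 0^+$ in the hypothesis shows that every singleton, hence every diameter-zero set, is $\mu$-null, so such sets may be discarded from any cover without changing either side of the estimate below. Now fix $0<\delta\le\ell_0$ and let $\{U_i\}_{i\in\mb N}$ be an arbitrary countable cover of $F$ by sets of diameter at most $\delta$ (countable covers suffice, as $\mb R^d$ is separable). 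Using countable subadditivity of the outer measure associated with $\mu$, together with the displayed bound applied to each nondegenerate $U_i$, we obtain
$$\mu(F)\ \le\ \sum_{i}\mu(U_i)\ \le\ \sum_{i}(\operatorname{diam} U_i)^{\beta}.$$
Taking the infimum over all such covers yields $\mu(F)\le\cH^{\beta}_{\delta}(F)$ for every $\delta\in(0,\ell_0]$, and letting $\delta\to 0^+$ gives $\cH^{\beta}(F)\ge\mu(F)>0$.

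Finally, since $\cH^{s}(F)=0$ for every $s>\dimH F$, the inequality $\cH^{\beta}(F)>0$ forces $\beta\le\dimH F$, which is the assertion. There is no genuine obstacle in this argument --- it is the classical statement \cite[Proposition~1.7]{Fal14} --- and the only points requiring (routine) care are the reduction from arbitrary covers to countable covers, the interpretation of $\mu(F)$ as an outer measure when $F$ is not $\mu$-measurable, and the harmless handling of degenerate covering sets noted above.
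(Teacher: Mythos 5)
Your argument is correct and is the standard Frostman/mass-distribution proof: pass from hyper-cubes to arbitrary sets of small diameter, sum over a cover to get $\mu(F)\le\mathcal H^{\beta}_{\delta}(F)$, and conclude $\mathcal H^{\beta}(F)\ge\mu(F)>0$. The paper does not prove this lemma at all --- it simply cites \cite[Proposition~1.7]{Fal14} --- and your write-up matches that reference's argument, with the routine care about degenerate covering sets and non-measurable $F$ handled appropriately.
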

\subsection{Counting}
In order to compute the Hausdorff dimension of the Cantor set $E_{\infty}$, we first need to give a lower bound on the number of boxes of $\mathcal{E}_l$ which are contained in a fixed box $E\in \mathcal{E}_{l-1}$. \\
Recall that $$\rho_i:=R^{1+\tilde{w_i}} \quad\text{ and }\quad \rho:=\prod_{i=1}^d \floor*{R^{1+\tilde{w_i}}}.$$
\begin{proposition}\label{prop:count}
For the collection $\{\mathcal{E}_l\}$ constructed above, there exists $\eta>0$ (depending only on $\delta,\tau, \bs{w}$ and the choice of $\varepsilon$ in Lemma \ref{lem:newlem}) such that for every $0<\gamma<1$ and $k\geq k(\gamma)$ it holds that
\begin{equation*}
\# \mc E_{n_k}\geq \left(\rho(1-\rho^{-\eta})\right)^{n_k(1-\gamma)}.
\end{equation*}
Moreover, for every $n_{k}<n\leq n_{k+1}$
\begin{equation}
\label{eq:card2}
\# \mc E_{n}\geq \#\mc {E}_{n_k}\cdot g_k(n),    
\end{equation}
where 
$$g_k(n):=\begin{cases}
1 & \mbox{if }n_k < n\le n_k^{(d)}\\[4pt]
\dfrac{1}{2}\prod_{i=1}^{d} \floor*{\rho_i^{(n_k^{(d)}+1-n_k^{(i)})}} \big( \rho(1-\rho^{-\eta})\big)^{n-(n_k^{(d)}+1)} & \mbox{if }n_k^{(d)}+1 \leq n\leq n_{k+1}
\end{cases}.$$
\end{proposition}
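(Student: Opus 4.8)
The plan is to establish the two displayed bounds in turn, propagating the cardinality estimates from Lemmas~\ref{lem:case1}--\ref{lem:case4} along the nested construction. Since the collection $\mc E_\ell$ is nested, for each $n_k < n \le n_{k+1}$ we have the multiplicative telescoping
\[
\#\mc E_n \;=\; \#\mc E_{n_k}\cdot \prod_{\ell=n_k+1}^{n} \min_{E\in\mc E_{\ell-1}} \#\mc E_\ell(E),
\]
so everything reduces to multiplying the per-box lower bounds over one period $n_k < \ell \le n_{k+1}$, split according to the four cases of Section~\ref{sec:3.2}. First I would record the elementary consequence that, by the choice $R > (8(d+1)!)^{1/(1+\tilde w_1)}$, one has $2^d\lfloor\rho_1\rfloor^{-1} < 1/2$ and moreover $2^d\lfloor\rho_1\rfloor^{-1} \le \rho^{-\eta}$ and $\rho^{-\varepsilon/2}\le\rho^{-\eta}$ for a suitable small $\eta=\eta(\delta,\tau,\bs w,\varepsilon)>0$; this lets me absorb the error terms in Lemmas~\ref{lem:case1} and \ref{lem:case4} into a single factor $\rho(1-\rho^{-\eta})$, at the cost of enlarging $\eta$.

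Next I would handle $g_k(n)$ directly from the case analysis. For $n_k < n \le n_k^{(d)}$ we are in Case~2, where Lemma~\ref{lem:case2} gives $\#\mc E_\ell(E)=1$ for every $E$, hence $\#\mc E_n \ge \#\mc E_{n_k}$, which is the first branch. For $n = n_k^{(d)}+1$ we are in Case~3, and Lemma~\ref{lem:case3} yields the factor $\tfrac12\prod_{i=1}^d\lfloor \rho_i^{(n_k^{(d)}+1-n_k^{(i)})}\rfloor$. For $n_k^{(d)}+2 \le n \le \xi n_k^{(d)}$ we are in Case~4 and for $\xi n_k^{(d)} < n \le n_{k+1}$ in Case~1; in both regimes the per-step factor is at least $\rho(1-\rho^{-\eta})$ by Lemmas~\ref{lem:case4} and \ref{lem:case1} together with the absorption of error terms noted above. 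Multiplying these $n-(n_k^{(d)}+1)$ factors gives $\big(\rho(1-\rho^{-\eta})\big)^{\,n-(n_k^{(d)}+1)}$, which is the second branch of $g_k(n)$, and establishes \eqref{eq:card2}.

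It then remains to prove the lower bound on $\#\mc E_{n_k}$ itself, which I would obtain by induction on $k$. Applying \eqref{eq:card2} with $n = n_{k+1}$ gives $\#\mc E_{n_{k+1}} \ge \#\mc E_{n_k}\cdot g_k(n_{k+1})$, and $g_k(n_{k+1})$ contains the main factor $\big(\rho(1-\rho^{-\eta})\big)^{\,n_{k+1}-(n_k^{(d)}+1)}$. The point is that the exponents telescope up to a controlled error: summing $n_{j+1}-(n_j^{(d)}+1)$ over $j<k$ recovers $n_k$ minus the ``lost'' blocks, each of size $n_j^{(d)}+1-n_j$, which by \eqref{eq:nkdup} is at most $2\tfrac{1+\tau w_d}{1+\tilde w_d}n_j$, and since $n_{j+1} > \xi n_j^{(d)} \ge n_j^{(d)}$ grows at least geometrically, the total loss is a fraction of $n_k$ tending to $0$; this is where the factor $(1-\gamma)$ and the threshold $k\ge k(\gamma)$ enter. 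I also need to check that the factors $\tfrac12\prod_i\lfloor\rho_i^{(n_j^{(d)}+1-n_j^{(i)})}\rfloor$ coming from Case~3 at each scale $n_j^{(d)}+1$ only help (they are $\ge 1$ once $R$ is large), and that the constant prefactors $\tfrac12$ accumulated over the $k$ stages are negligible compared with $\big(\rho(1-\rho^{-\eta})\big)^{\gamma n_k}$. The main obstacle is precisely this bookkeeping of exponents: one must verify that the union of the ``frozen'' intervals $(n_j,n_j^{(d)}]$ where $g_k\equiv 1$, together with the single-scale deficits at $n_j^{(d)}+1$, occupies at most a $\gamma$-fraction of $[0,n_k]$ for $k$ large — a consequence of the geometric growth $n_{k+1}>\xi n_k^{(d)}$ and the linear bound \eqref{eq:nkdup} — after which the estimate $\#\mc E_{n_k}\ge \big(\rho(1-\rho^{-\eta})\big)^{n_k(1-\gamma)}$ follows by assembling the surviving geometric factors.
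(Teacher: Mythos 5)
Your proposal is correct and follows essentially the same route as the paper: absorb the error terms of Lemmas \ref{lem:case1} and \ref{lem:case4} into a single factor $\rho(1-\rho^{-\eta})$, read off $g_k(n)$ from the four case lemmas to get \eqref{eq:card2}, and then telescope over the blocks, using the superlinear growth $n_{j+1}>\xi n_j^{(d)}$ together with \eqref{eq:nkdup} to show the ``frozen'' intervals and the accumulated factors of $\tfrac12$ cost only a $\gamma$-fraction of the exponent for $k\geq k(\gamma)$. The only cosmetic difference is that the paper discards all but the last block's geometric contribution (bounding $\#\mc E_{n_k}\geq(\rho(1-\rho^{-\eta}))^{n_k-2n_{k-1}^{(d)}}$ directly), whereas you keep the full telescoped product and bound the total loss; both rest on the same fact that $n_{k-1}/n_k\to 0$.
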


\begin{proof}
By choosing $\eta>0$ to be small enough such that $\rho^{-\varepsilon/2}+2^d \floor{\rho_1}^{-1}\leq \rho^{-\eta}$, it follows from Lemmas \ref{lem:case1}, \ref{lem:case2}, \ref{lem:case3} and \ref{lem:case4} that for every $E\in \mathcal{E}_{l-1}$ and every $k\in \N$, we have:\vspace{2mm}
\begin{enumerate}
    \item $\#\mathcal{E}_l(E)\geq \rho(1-\rho^{-\eta})$ if ${\xi} n_{k-1}^{(d)}<l\leq n_k$. \\
    \item $\#\mathcal{E}_l(E)=1$ if $n_k< l \leq n_k^{(d)}$. \\
    \item  $\#\mathcal{E}_l(E)\geq \dfrac{1}{2}\prod_{i=1}^{d} \floor*{\rho_i^{(n_k^{(d)}+1-n_k^{(i)})}}$ if $l=n_k^{(d)}+1$.\\
    \item $\#\mathcal{E}_l(E)\geq \rho(1-\rho^{-\eta})$ if $n_k^{(d)}+2\leq l\leq {\xi} n_k^{(d)}$.\vspace{2mm}
\end{enumerate}
This immediately implies \eqref{eq:card2}. Moreover, \eqref{eq:card2} implies that
\begin{multline*}
\# \mc E_{n_k}\geq \#\mc E_0\cdot \prod_{h<k}g_{h}(n_{h+1})\geq \frac{\# \mc E_0}{2^k}\prod_{h<k}\big( \rho(1-\rho^{-\eta})\big)^{n_{h+1}-(n_h^{(d)}+1)} \\
\geq \frac{1}{2^k}\big( \rho(1-\rho^{-\eta})\big)^{n_{k}-(n_{k-1}^{(d)}+1)}\geq  \big( \rho(1-\rho^{-\eta})\big)^{n_{k}-2n_{k-1}^{(d)}},
\end{multline*}
where we used the fact that
$$2^{k}\leq\left(\frac{\rho}{2}\right)^{n_{k-1}^{(d)}-1}.$$
The conclusion follows from the fact that $n_{k-1}/n_{k}\to 0$ as $k\to \infty$.
 \end{proof}

We also need an upper bound on the number of boxes in the collection $\mc E_n$ that intersect a given hypercube $B$ non-trivially.

\begin{proposition}
\label{prop:intersection}
Let $B$ be a box of side length $l$. For every $k\in \N$ and $n_k<n\leq  n_{k+1}$ it holds that
\begin{equation}
\label{eq:prodest}
\#\{E\in\mc E_n:E\cap B\neq \emptyset\} \\
\leq \prod_{i=1}^d \left(2\max\left\{\frac{\ell}{\rho_0^{(i)}\rho_i^{-n_k}},1\right\}\cdot {h^{(i)}_k}(n)\right),
\end{equation}
where
$${h^{(i)}_k}(n):=\begin{cases}
1 & \mbox{if }n_k < n\le n_k^{(d)}\\[4pt]
\min \left\{\dfrac{l}{\rho_0^{(i)}\rho_i^{-n_k^{(i)}}},1\right\} \cdot \max\left\{\dfrac{\rho_i^{-n_k^{(i)}}}{\rho_i^{-n}},\dfrac{\rho_0^{(i)}\rho_i^{-n_k^{(i)}}}{l}\right\} & \mbox{if }n_k^{(d)}+1 \leq n\leq n_{k+1}
\end{cases}.$$
\end{proposition}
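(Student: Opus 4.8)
The strategy is to count, level by level, how many descendants of a single box $E\in\mc E_{l-1}$ can meet a fixed hypercube $B$ of side length $\ell$, and then multiply these local counts along the chain from level $n_k$ down to level $n$. The key point is that at every level the boxes of $\mc E_l$ sitting inside a common parent are a subcollection of a \emph{grid} of axis-parallel boxes of prescribed side lengths, so a convexity/packing argument applies coordinatewise: a hypercube of side $\ell$ meets at most $\prod_{i=1}^d(2\max\{\ell/(\text{side length in direction }i),1\})$ boxes of such a grid. The proof therefore reduces to bookkeeping the side lengths produced by the four cases of Section~\ref{sec:3.2}, keeping track separately of the ``shrinking'' factor $\ell/(\rho_0^{(i)}\rho_i^{-n_k})$ coming from how small $B$ is relative to the boxes at the top level $n_k$, and of the per-coordinate growth factor $h^{(i)}_k(n)$.

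Concretely, I would proceed as follows. First, record the coordinatewise grid-intersection lemma: if $\mc G$ is a family of disjoint axis-parallel boxes all of side length $s_i$ in direction $i$ and all contained in a common box, then any hypercube of side $\ell$ meets at most $\prod_i 2\max\{\ell/s_i,1\}$ of them (this is the same packing bound already used implicitly for $\mc F_l(E)$, $\mc J$, etc.). Second, anchor the induction at level $n_k$: by Properties \eqref{D1}–\eqref{D2} the boxes of $\mc E_{n_k}$ have side length $\rho_0^{(i)}\rho_i^{-n_k}=\rho_0^{(i)}R^{-(1+\tilde w_i)n_k}$ in direction $i$ (using $n_k\le n_k^{(i)}$ from Lemma~\ref{lem:lemma5}), so the number of them meeting $B$ is at most $\prod_i 2\max\{\ell/(\rho_0^{(i)}\rho_i^{-n_k}),1\}$ — this is exactly the first product on the right-hand side of \eqref{eq:prodest} with all $h^{(i)}_k$ equal to $1$. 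Third, handle the regime $n_k<n\le n_k^{(d)}$: here Case~2 gives $\#\mc E_l(E)=1$, so no box can gain descendants and the count at level $n$ is bounded by the count at level $n_k$; thus $h^{(i)}_k(n)=1$, matching the first branch. Fourth, handle $n_k^{(d)}+1\le n\le n_{k+1}$: pass from level $n_k^{(d)}$ to level $n_k^{(d)}+1$ using the grid of side lengths $\rho_0^{(i)}\rho_i^{-(n_k^{(d)}+1)}$ inside each parent of side length $\rho_0^{(i)}\rho_i^{-n_k^{(i)}}$ (Case~3, $\mc F_{n_k^{(d)}+1}(E)$), and then continue through Case~1/Case~4 where each subdivision multiplies the side length in direction $i$ by $\rho_i^{-1}$. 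Summing the local bounds and simplifying gives, in direction $i$, a factor $\min\{\ell/(\rho_0^{(i)}\rho_i^{-n_k^{(i)}}),1\}\cdot\max\{\rho_i^{-n_k^{(i)}}/\rho_i^{-n},\,\rho_0^{(i)}\rho_i^{-n_k^{(i)}}/\ell\}$, which is precisely $h^{(i)}_k(n)$ in the second branch: the $\min$ records whether $B$ is already smaller than a level-$n_k^{(i)}$ box in that direction, and the $\max$ interpolates between "$B$ spans many final boxes" ($\rho_i^{-n_k^{(i)}}/\rho_i^{-n}$ subdivisions) and "$B$ is tiny, so it meets essentially one box at each level but we still must count the remainder/overlap boxes" ($\rho_0^{(i)}\rho_i^{-n_k^{(i)}}/\ell$). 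Multiplying the top-level anchor bound by $\prod_i h^{(i)}_k(n)$ yields \eqref{eq:prodest}.

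The main obstacle I expect is the fourth step, specifically getting the two cases of the $\max$ in $h^{(i)}_k(n)$ to come out cleanly. The subtlety is that between level $n_k^{(d)}+1$ and level $n$ one iteratively subdivides, but the boxes that survive are only a subset of the full grid, and — crucially — near level $n_k^{(d)}+1$ the collections $\mc J$, $\mc R_n(E)$ include ``remainder boxes'' of smaller-than-nominal side length (cf. \eqref{eq:card}), so one must be a little careful that the grid-packing bound still applies with the nominal side lengths (it does, after inflating by the harmless factor $2^d$ already present). Equally delicate is matching the transition from the first branch to the second at $n=n_k^{(d)}+1$: one has to check that the anchor side length $\rho_0^{(i)}\rho_i^{-n_k}$ together with the factor $\min\{\ell/(\rho_0^{(i)}\rho_i^{-n_k^{(i)}}),1\}$ correctly telescopes the count from level $n_k$ through level $n_k^{(i)}$ (where Case~2 froze the construction) down to level $n_k^{(d)}+1$ — this uses $n_k\le n_k^{(i)}\le n_k^{(d)}$ and the fact that a box frozen at level $n_k^{(i)}$ has side length $\rho_0^{(i)}\rho_i^{-n_k^{(i)}}$ in direction $i$. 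Once these side-length bookkeeping points are pinned down, the rest is a routine multiplication of per-coordinate bounds.
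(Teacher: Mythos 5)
Your proposal is correct and follows essentially the same route as the paper: both arguments reduce to a coordinatewise packing count (the paper projects via $\sigma_i$ and splits into three cases according to whether $\ell$ exceeds $\rho_0^{(i)}\rho_i^{-n_k}$, lies between $\rho_0^{(i)}\rho_i^{-n_k^{(i)}}$ and $\rho_0^{(i)}\rho_i^{-n_k}$, or is smaller still), with the $\min$/$\max$ in $h^{(i)}_k$ encoding exactly the re-anchoring at levels $n_k^{(i)}$ and $n$ that you describe. The only caveat, shared with the paper's own Case 1, is the packing constant (an interval of length $\ell\geq s$ can meet up to $\lceil \ell/s\rceil+1\leq 3\ell/s$ grid intervals, so the leading $2$ is really a $3$), which is harmless for the dimension estimate.
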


\begin{proof}
It will be convenient to argue in each coordinate separately. Denoting by $\sigma_i$ the projection onto the $i$th coordinate, we have
\begin{equation*}
\#\{E\in\mc E_n:E\cap B\neq \emptyset\}    
\leq \prod_{i=1}^d \#\{\sigma_i(E):E\in\mc E_n\mbox{ and }\sigma_i(E)\cap\sigma_i(B)\neq\emptyset\}.
\end{equation*}
Fix $1\leq i\leq d$. We will show that
\begin{equation}
\label{eq:toshow}
\#\{\sigma_i(E):E\in\mc E_n\mbox{ and }\sigma_i(E)\cap\sigma_i(B)\neq\emptyset\} 
\leq 2\max\left\{\frac{\ell}{\rho_0^{(i)}\rho_i^{-n_k}},1\right\}\cdot {h^{(i)}_k}(n).
\end{equation}
We distinguish three cases:\\

\bigskip

\textbf{Case 1}: $\ell>\rho_0^{(i)} \rho_i^{-n_k}$.

\bigskip

In this case, 
$$\#\{\sigma_i(E):E\in\mc E_{n_k}\mbox{ and }\sigma_i(E)\cap\sigma_i(B)\neq\emptyset\} \leq \frac{l}{\rho_0^{(i)}\rho_i^{-n_k}}+2\leq \frac{3l}{\rho_0^{(i)}\rho_i^{-n_k}} $$
% \textcolor{blue}{condition on last inequality}
Since $n>n_k$, and due to the nestedness of the collections $\mathcal{E}_l$, we have
\begin{multline*}
\#\{\sigma_i(E):E\in\mc E_n\mbox{ and }\sigma_i(E)\cap\sigma_i(B)\neq\emptyset\}\\
\leq 3\frac{\ell}{\rho_0^{(i)}\rho_i^{-n_k}}\cdot \max_{E\in\mc E_{n_k}}\#\{\sigma_i(E')\subset\sigma_i(E):E'\in\mc E_n\}\leq 3\frac{\ell}{\rho_0^{(i)}\rho_i^{-n_k}}\cdot {f^{(i)}_k}(n),    
\end{multline*}
where 
$${f^{(i)}_k}(n):=\begin{cases}
1 & \mbox{if }n_k < n\le n_k^{(d)}\\[4pt]
\rho_i^{\,n-n_k^{(i)}} & \mbox{if }n_k^{(d)}+1 \leq n\leq n_{k+1}
\end{cases}.$$
Note that in this range of $\ell$ one has that ${f^{(i)}_k}(n)\leq {h^{(i)}_k}(n)$.

\bigskip 

\textbf{Case 2}: $\rho_0^{(i)}\rho_i^{-n_k^{(i)}}<\ell\leq \rho_0^{(i)}\rho_i^{-n_k}$.

\bigskip

Since $\ell\leq \rho_0^{(i)}\rho_i^{-n_k}$, $\sigma_i(B)$ intersects at most $2$ intervals in the level $n_k$ in the $i$-th coordinate. Hence,
\begin{multline*}
\#\{\sigma_i(E):E\in\mc E_n\mbox{ and }\sigma_i(E)\cap\sigma_i(B)\neq\emptyset\}\\
\leq 2 \cdot \max_{E\in\mc E_{n_k}}\#\{\sigma_i(E')\subset\sigma_i(E):E'\in\mc E_n\}=2\cdot {f^{(i)}_k}(n).    
\end{multline*}
Once again, in this range of $\ell$ one has that ${f^{(i)}_k}(n)\leq {h^{(i)}_k}(n)$.

\bigskip

\textbf{Case 3}: $ l \leq
\rho_0^{(i)}\rho_i^{-n_k^{(i)}}$. 

\bigskip

In this case, we have that
\begin{equation}
 \#\{\sigma_i(E):E\in\mc E_n\mbox{ and }\sigma_i(E)\cap\sigma_i(B)\neq\emptyset\} \leq 2\tilde f_k^{(i)}(n),     
\end{equation}
where 
$$\tilde f_k^{(i)}(n):=\begin{cases}
1, & n_k < n\le n_k^{(d)}\\[4pt]
\max\left\{\dfrac{l}{\rho_0^{(i)}\rho_i^{-n}},1\right\}, & n_k^{(d)}+1 \leq n\leq n_{k+1}
\end{cases}.$$
Let us explain why this holds. If $n_k < n\le n_k^{(d)}$, we have that $\sigma_i(B)\leq \sigma_i(E')$ for any $E'\in\mc E_n$, so that $\sigma_i(B)$ may intersect at most two projections $\sigma_i(E')$. In the case $n_k^{(d)}+1 \leq n\leq n_{k+1}$, we are simply applying the trivial upper bound
$$\max\left\{\frac{\ell}{\sigma_i(E')},1\right\}.$$
Note that in this range of $\ell$ one has that $\tilde f_k^{(i)}(n)\leq {h^{(i)}_k}(n)$.

\end{proof}
%%%%%%%%%%%%%%%%%%%%%%%%%%%%%%%%%%%%%%%%%%%%%%%%%%%%%%%%%%%%%
\subsection{Dimension estimate} Define $\mu$ recursively in the following way:\\
For $E_0\in \mathcal{E}_0$, set $\mu(E_0)=(\# \mathcal{E}_0)^{-1}$. For $n\geq 1$, and $E_n\in \mathcal{E}_n$, let
\begin{equation}\label{eq:meas}
  \mu(E_n)=\frac{1}{\#\mathcal{E}_n(E_{n-1})}\cdot \mu(E_{n-1}),  
\end{equation}
where $E_{n-1}$ is the unique box of level $n-1$ containing $E_n$. Then, by \cite[Proposition 1.7]{Fal14}, $\mu$ can be extended to a probability measure supported on $E_{\infty}$ such that for any Borel set $B$,
\begin{equation}\label{eq:muB}
    \mu(B)\leq \inf_{n\in \N}\sum_{\{E\in \mathcal{E}_n: E\cap B\neq \emptyset\}}\mu(E)\overset{\eqref{eq:meas}}{=} \inf_{n\in \N}\frac{\#\{E\in \mathcal{E}_n: E\cap B\neq \emptyset\}}{\# \mathcal{E}_n}. 
\end{equation}
Let $B$ be a box of side length $l$. Our goal is to provide a lower bound on the quantity
$$\log_{\ell}\mu(B)$$
for small enough $l$, this will give a lower bound on the Hausdorff dimension, according to Lemma \ref{lem:mdp}. Fix $\gamma>0$ and assume $l <\rho_1^{-n_{k(\gamma)}}$ (where $k(\gamma)$ is determined in proposition \ref{prop:count}). Let $n \geq n_{k(\gamma)}$ be such that 
\begin{equation}\label{eq:l}
    \rho_1^{-(n+1)}\leq \ell<\rho_1^{-n}
\end{equation}
and let $k$ be such that $$n_k<n\leq n_{k+1}.$$
Define $$n_B:=\max\{n,n_{k}^{(d)}+1\}$$
By \eqref{eq:muB}, we have
$$\mu(B)\leq \frac{\#\{E\in \mathcal{E}_{n_B}: E\cap B\neq \emptyset\}}{\# \mathcal{E}_{n_B}}$$
Hence
\begin{align}
    \log_{\ell}\mu(B)&\geq \log_{\ell}\left(\#\{E\in\mc E_{n_B}:E\cap B\neq \emptyset\}\right)+\log_{\ell}\left(\frac{1}{\#\mc E_{n_B}}\right) \notag\\
&=\frac{\log(\#\mc E_{n_B})}{|\log\ell|}-\frac{\log\left(\#\{E\in\mc E_{n_B}:E\cap B\neq \emptyset\}\right)}{|\log\ell|} \notag\\
&\overset{\eqref{eq:l}}{\geq} \frac{\log(\#\mc E_{n_B})}{(n+1)\log\rho_1}-\frac{\log\left(\#\{E\in\mc E_{n_B}:E\cap B\neq \emptyset\}\right)}{n\log\rho_1}. \label{eq:logmuB}
\end{align}
% Now, recall that $\rho_1^{-(n+1)}\leq \ell<\rho_1^{-n}$. Thus, we find that
% \begin{equation}
% \label{eq:27}
% \log_{\ell} \mu(B)\geq \frac{\log(\#\mc F_n)}{(n+1)\log\rho_1}-\frac{\log\left(\#\{F\in\mc F_n:F\cap B\neq \emptyset\}\right)}{n\log\rho_1}. 
% \end{equation}
We will treat these two terms separately, using Propositions \ref{prop:count} and \ref{prop:intersection}. Let us start from the first one.
Since $n_B\geq n_k^{(d)}+1$, by Proposition \ref{prop:count}, we have that
$$\# \mc E_{n_B}\geq \dfrac{1}{2}\left(\rho(1-\rho^{-\eta})\right)^{n_k(1-\gamma)+n_B-(n_k^{(d)}+1)}\prod_{i=1}^{d} \floor*{\rho_i^{(n_k^{(d)}+1-n_k^{(i)})}} $$
Therefore,
\begin{multline}\label{eq:ing1}
    \frac{\log(\#\mc E_{n_B})}{(n+1)\log \rho_1}
\geq \sum_{i=1}^d \frac{n_k+n_B-n_k^{(i)}}{n}\frac{\log \rho_i}{\log \rho_1} \\
+O_d\left(\frac{1+|\log(1-\rho^{-\eta})|(n_k+n_B-(n_k^{(d)}+1))}{n\log \rho_1}+\frac{(\gamma\log \rho) n_k}{n\log \rho_1}+\frac{\log(\#\mc E_{n_B})}{n^2\log \rho_1}\right).
\end{multline}
% \vspace{-3mm}
% \begin{multline*}
% =\log(\gamma)\left(n_a+\max\left\{n-n_a^{(d)},0\right\}\right)+\sum_{i=1}^d\log(\rho_i)\left(n_a+\max\{n-n_a^{(i)},0\}\right) \\
% +O_d\left(n\log(\rho_d)\cdot \varepsilon+n\log\left(1+\frac{1}{\rho_1}\right)\right),
% \end{multline*}
% whence
% \begin{multline}
% \label{eq:ing1}
% \frac{\log(\#\mc F_n)}{(n+1)\log(\rho_1)}\geq \log(\gamma)+\sum_{i=1}^{d}\frac{\log(\rho_i)}{\log(\rho_1)}\left(\frac{n_a}{n}+\max\left\{1-\frac{n_a^{(i)}}{n},0\right\}\right) \\
% +O_d\left(\frac{\log(\rho_d)}{\log(\rho_1)}\cdot\varepsilon+\frac{1}{\rho_1\log(\rho_1)}\right).
% \end{multline}
Let us now move on to the second term in \eqref{eq:logmuB}. By Proposition \ref{prop:intersection} and the fact that $n_B\geq n_{k}^{(d)}+1$, we have that
\begin{multline*}
    \#\{E\in\mc E_{n_B}:E\cap B\neq \emptyset\}\leq \\
    \prod_{i=1}^d \left(2\max\left\{\frac{\ell}{\rho_0^{(i)}\rho_i^{-n_k}},1\right\}\cdot \min \left\{\dfrac{l}{\rho_0^{(i)}\rho_i^{-n_k^{(i)}}},1\right\} \cdot \max\left\{\dfrac{\rho_i^{-n_k^{(i)}}}{\rho_i^{-n_B}},\dfrac{\rho_0^{(i)}\rho_i^{-n_k^{(i)}}}{l}\right\}\right)\\
    \leq \prod_{i=1}^d \left(2\max\left\{\frac{\rho_1^{-n}}{\rho_0^{(i)}\rho_i^{-n_k}},1\right\}\cdot \min \left\{\dfrac{\rho_1^{-n}}{\rho_0^{(i)}\rho_i^{-n_k^{(i)}}},1\right\} \cdot \rho_i^{n_B-n_k^{(i)}}\right),
\end{multline*}
where the last inequality follows from the fact that $\rho_1^{-(n+1)}\leq l<\rho_1^{-n}$, $n_B\geq n$, and $\rho_0^{(i)}\cdot \rho_1\leq 1$.
% \textcolor{blue}{condition is needed on constants}
Hence
\begin{multline}
\label{eq:ing2}
\frac{\log(\#\{E\in\mc E_{n_B}:E\cap B\neq \emptyset\})}{n\log\rho_1}\leq \sum_{i=1}^d\max\left\{\frac{n_k}{n}\frac{\log(\rho_i)}{\log(\rho_1)}-1,0\right\}\\
+\min\left\{\frac{n_k^{(i)}}{n}\frac{\log(\rho_i)}{\log(\rho_1)}-1,0\right\}+\frac{n_B-n_k^{(i)}}{n}\frac{\log \rho_i}{\log \rho_1}+O_{d}\left(\sum_{i=1}^d\frac{1+|\log \rho_0^{(i)}|}{n\log \rho_1 }\right).     
\end{multline}
Combining \eqref{eq:ing1} and \eqref{eq:ing2} we deduce that
\begin{multline*}
\log_{\ell}\mu(B)\geq \sum_{i=1}^d \left(\frac{n_k}{n}\frac{\log \rho_i}{\log \rho_1}-\max\left\{\frac{n_k}{n}\frac{\log(\rho_i)}{\log(\rho_1)}-1,0\right\}
-\min\left\{\frac{n_k^{(i)}}{n}\frac{\log(\rho_i)}{\log(\rho_1)}-1,0\right\}\right) \\
+O_{d,\tau,\bs w,\delta}\left(\frac{1}{\log R}+\gamma+\frac{1}{n}\right),
\end{multline*}
which can be re-written as
\begin{multline}
\label{eq:mu}
\log_{\ell}\mu(B)\geq\sum_{i=1}^d\min\left\{\frac{\log(\rho_i)}{\log(\rho_1)}\frac{n_k}{n},1\right\}+ \max\left\{1-\frac{\zeta_i\cdot \log(\rho_i)}{\log(\rho_1)}\frac{n_k}{n},0\right\} \\
+O_{d,\tau,\bs w,\delta}\left(\frac{1}{\log R}+\gamma+\frac{1}{n}\right).
\end{multline}
where $\zeta_i:=(1+\tau w_i)/(1+\tilde{w}_i)$.
% \textcolor{blue}{\begin{multline}
% % \label{eq:mu}
% \log_{\ell}\mu(B)\geq\sum_{i=1}^d\min\left\{\frac{\log(\rho_i)}{\log(\rho_1)}\frac{n_k}{n},1\right\}+ h_i(n) \\
% +O_d\left(\frac{\log(\rho_d)}{\log(\rho_1)}\cdot\varepsilon+\frac{1}{\rho_1\log(\rho_1)}+\frac{1}{n}\right).
% \end{multline}
% where 
% $$h_i(n):=\begin{cases}
% \displaystyle 
% 0,
% & \text{if } n<n_k^{(d)},\\[1.2em]
% \displaystyle 
% \max\!\left\{
% 1-\frac{\zeta_i\,\log(\rho_i)}{\log(\rho_1)}\,\frac{n_k}{n},\ 0
% \right\},
% & \text{if } n>n_k^{(d)}.
% \end{cases}$$}

\subsection{Finding the Minimum}
We will now study the function
$$f(x):=\sum_{i=1}^d\min\left\{\frac{\log(\rho_i)}{\log(\rho_1)}x,1\right\}+\max\left\{1-\frac{\zeta_i\cdot \log(\rho_i)}{\log(\rho_1)}x,0\right\}$$
for $x\in[0,1]$. For $h=1,\dotsc,d$ and $k=0,\dotsc,d$ we define
$$f_{h,k}(x):=k+d-h+\left(\log(\rho_1\dotsm\rho_h)-\sum_{i=1}^k\zeta_i\cdot\log(\rho_i)\right)\frac{x}{\log(\rho_1)},$$
where the sum over $i$ is null if $k=0$. Our goal will be to prove the following.
\begin{proposition}
\label{prop:min}
It holds that
\begin{equation*}
\min_{x\in[0,1]}f(x)=\min_{1\leq k\leq d}\min_{1\leq h\leq d}f_{h,k}\left(\frac{\log(\rho_1)}{\zeta_{k}\cdot\log(\rho_k)}\right).
\end{equation*}    
\end{proposition}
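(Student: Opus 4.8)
The plan is to exploit the decomposition $f=\phi+\psi$ with
\[
\phi(x):=\sum_{i=1}^d\min\{a_ix,1\},\qquad \psi(x):=\sum_{i=1}^d\max\{1-b_ix,0\},\qquad a_i:=\frac{\log\rho_i}{\log\rho_1},\quad b_i:=\frac{\zeta_i\log\rho_i}{\log\rho_1}.
\]
Since $\rho_i=R^{1+\tilde w_i}$ one has $a_i=(1+\tilde w_i)/(1+\tilde w_1)$ and $b_i=(1+\tau w_i)/(1+\tilde w_1)$, so both sequences are nondecreasing in $i$, and $b_i/a_i=\zeta_i>1$ because $\tilde w_i<\tau w_i$ (which follows from \eqref{tau1} and \eqref{tau2}). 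Hence $\phi$ is a sum of concave, increasing piecewise-linear functions with breakpoints $1/a_i$, $\psi$ is a sum of convex, decreasing piecewise-linear functions with breakpoints $1/b_i$, and $0<1/b_i<1/a_i\le 1$, both nonincreasing in $i$. The points $x^\ast_k:=\log\rho_1/(\zeta_k\log\rho_k)=1/b_k$ lie in $(0,1)$ and are precisely those appearing on the right-hand side.

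The heart of the proof is the identity $\min_{[0,1]}f=\min_{1\le k\le d}f(x^\ast_k)$. First I would partition $[0,1]$ into the maximal subintervals on which $\phi$ is affine; their endpoints lie in $\{0,1\}\cup\{1/a_h\}_{h}$. On each such subinterval $f$ is affine plus $\psi$, hence convex, so its minimum over the subinterval is attained at an endpoint of the subinterval or at a breakpoint of $\psi$ lying inside it, i.e.\ at some $x^\ast_k$. Therefore the global minimum of $f$ is attained at a point of $\{0,1\}\cup\{1/a_h\}_{h}\cup\{x^\ast_k\}_{k}$, and it remains to discard the first two families. At $x=0$ the right derivative of $f$ is $\sum_i(a_i-b_i)=\bigl((d+1)-(d+\tau)\bigr)/(1+\tilde w_1)<0$, using $\sum_i\tilde w_i=\sum_i w_i=1$ and $\tau>1$; so $0$ is not a minimizer. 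Just left of $x=1$ one has $\psi\equiv 0$ (all $b_i>1$) and the derivative of $f$ equals $\#\{i:\tilde w_i=\tilde w_1\}>0$; so $1$ is not a minimizer. Finally, at any interior breakpoint $1/a_h$ that is not one of the $x^\ast_k$, $\psi$ is affine while $\phi$ has a genuine concave corner, so $f$ has a concave corner there and cannot attain a local (hence global) minimum. Consequently the minimum is attained at some $x^\ast_k$, and since $f(x^\ast_k)\ge\min_{[0,1]}f$ trivially, $\min_{[0,1]}f=\min_k f(x^\ast_k)$.

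The last step is the explicit evaluation $f(x^\ast_k)=\min_{1\le h\le d}f_{h,k}(x^\ast_k)$. From $b_ix^\ast_k=(1+\tau w_i)/(1+\tau w_k)$ and $a_ix^\ast_k=(1+\tilde w_i)/(1+\tau w_k)$ one sees $b_ix^\ast_k\le 1\iff i\le k$ and $a_ix^\ast_k\le 1\iff\tilde w_i\le\tau w_k$; letting $h_0=h_0(k)$ be the largest index with $\tilde w_{h_0}\le\tau w_k$ (so $h_0\ge k$ because $\tilde w_k<\tau w_k$), a direct computation gives $\phi(x^\ast_k)=(d-h_0)+\tfrac{x^\ast_k}{\log\rho_1}\log(\rho_1\cdots\rho_{h_0})$ and $\psi(x^\ast_k)=k-\tfrac{x^\ast_k}{\log\rho_1}\sum_{i=1}^k\zeta_i\log\rho_i$, hence $f(x^\ast_k)=f_{h_0,k}(x^\ast_k)$. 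For an arbitrary $h$, $f_{h,k}(x^\ast_k)-f_{h_0,k}(x^\ast_k)$ simplifies to $\sum_{h_0<i\le h}(a_ix^\ast_k-1)$ if $h>h_0$ and to $\sum_{h<i\le h_0}(1-a_ix^\ast_k)$ if $h<h_0$, and both sums are $\ge 0$ since $a_ix^\ast_k\ge1$ for $i>h_0$ and $\le1$ for $i\le h_0$; thus $f(x^\ast_k)=\min_h f_{h,k}(x^\ast_k)$. Putting the three steps together yields the proposition. The step I expect to require the most care is the corner analysis in the middle paragraph when several of the $\tilde w_i$, or several of the $w_i$, coincide: then some breakpoints merge, but the slope computations are unaffected, and one only needs to phrase the concave-corner exclusion so that it also covers these degenerate configurations.
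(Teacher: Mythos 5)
Your proof is correct, and it takes a route that is dual to the paper's rather than identical to it. The paper partitions $(0,1]$ into the intervals $J_k$ determined by the breakpoints of the convex part $\psi$, shows (Lemma \ref{lem:charoff}) that $f$ coincides there with the concave piecewise-linear function $f_k=\min_h f_{h,k}$, and uses Lemma \ref{lem:slopes} to push the minimum to the endpoints of each $J_k$; you instead partition by the breakpoints of the concave part $\phi$, note that $f$ is convex on each resulting piece, and locate the minimum at the $\psi$-breakpoints $x_k^\ast$ after discarding the $\phi$-breakpoints via the strict-concave-corner argument. Your final evaluation $f(x_k^\ast)=f_{h_0,k}(x_k^\ast)=\min_h f_{h,k}(x_k^\ast)$ is the same computation that Lemma \ref{lem:charoff} performs, specialized to $x=x_k^\ast$. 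What your version buys is a self-contained exclusion of the endpoints: you rule out $x=0$ and $x=1$ by one-sided derivatives ($f'(0^+)=(1-\tau)/(1+\tilde w_1)<0$ using $\sum_i\tilde w_i=\sum_i w_i=1$, and $f'(1^-)=\#\{i:\tilde w_i=\tilde w_1\}>0$ since every $b_i>1$), whereas the paper observes $f(0)=f(1)=d$ and discards the value $d$ by appealing to the external fact that $\dimH W_{\bs w}(\tau)<d$. Your treatment of ties is also sound: terms with $b_ix_k^\ast=1$ contribute $0$ to $\psi(x_k^\ast)$, so the formula $\psi(x_k^\ast)=k-\frac{x_k^\ast}{\log\rho_1}\sum_{i=1}^k\zeta_i\log\rho_i$ survives coincident weights, and merged corners of $\phi$ remain strictly concave, so the exclusion argument still applies.
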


To prove Proposition \ref{prop:min}, it will be convenient for us to work with the intervals
\begin{equation*}
I_h:=\left(\frac{\log(\rho_1)}{\log(\rho_{h+1})},\frac{\log(\rho_{1})}{\log(\rho_h)}\right]\quad\mbox{and}\quad  J_k:=\left(\frac{\log(\rho_1)}{\zeta_{k+1}\cdot\log(\rho_{k+1})},\frac{\log(\rho_1)}{\zeta_{k}\cdot \log(\rho_{k})}\right] 
\end{equation*}
for $h=1,\dotsc,d$ and $k=0,\dotsc,d$. We adopt the convention that $\zeta_0:=1$, $\rho_0:=\rho_1$, and $\rho_{d+1}:=+\infty$. Note that both the families $I_h$ and $J_k$ form a partition of the interval $(0,1]$. 

\begin{lemma}
\label{lem:charoff}
If $x\in I_h\cap J_k$, then
\begin{equation}
\label{eq:hkequality}
f(x)=f_{h,k}(x)=k+d-h+\left(\log(\rho_1\dotsm\rho_h)-\sum_{i=1}^k\zeta_i\cdot\log(\rho_i)\right)\frac{x}{\log(\rho_1)},    
\end{equation}
where the sum over $i$ is null if $k=0$. Moreover, if $x\in J_k$ then
$$f(x)=\min_h f_{h,k}(x).$$   
\end{lemma}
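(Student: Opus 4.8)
The plan is to unwind the definition of $f$ coordinatewise. For a fixed $x\in(0,1]$ and each index $i$, the $i$-th summand of $f$ splits into two pieces: the ``$\min$'' piece contributes $\frac{\log(\rho_i)}{\log(\rho_1)}x$ if $\frac{\log(\rho_i)}{\log(\rho_1)}x\le 1$, i.e.\ if $x\le \frac{\log(\rho_1)}{\log(\rho_i)}$, and contributes $1$ otherwise; the ``$\max$'' piece contributes $1-\frac{\zeta_i\log(\rho_i)}{\log(\rho_1)}x$ if $\frac{\zeta_i\log(\rho_i)}{\log(\rho_1)}x\le 1$, i.e.\ if $x\le \frac{\log(\rho_1)}{\zeta_i\log(\rho_i)}$, and contributes $0$ otherwise. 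First I would record that since $\rho_1\le\rho_2\le\dotsb\le\rho_d$ (which follows from $\tilde w_1\le\dotsb\le\tilde w_d$, cf.\ Lemma \ref{lem:Rynne} and $\rho_i=R^{1+\tilde w_i}$), the thresholds $\frac{\log(\rho_1)}{\log(\rho_i)}$ are non-increasing in $i$, and likewise (using $\zeta_1\le\dotsb\le\zeta_d$, which is exactly the chain of inequalities $\frac{1+\tau w_1}{1+\tilde w_1}\le\dotsb\le\frac{1+\tau w_d}{1+\tilde w_d}$ established in the proof of Lemma \ref{lem:lemma5}) the thresholds $\frac{\log(\rho_1)}{\zeta_i\log(\rho_i)}$ are non-increasing in $i$. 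Hence the condition $x\in I_h$ is precisely the statement that $\frac{\log(\rho_i)}{\log(\rho_1)}x\le 1$ for $i\le h$ and $>1$ for $i>h$, and $x\in J_k$ is precisely the statement that $\frac{\zeta_i\log(\rho_i)}{\log(\rho_1)}x\le 1$ for $i\le k$ and $>1$ for $i>k$.

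Given this, for $x\in I_h\cap J_k$ I would sum up: the ``$\min$'' pieces give $\sum_{i=1}^h \frac{\log(\rho_i)}{\log(\rho_1)}x+(d-h)$, and the ``$\max$'' pieces give $\sum_{i=1}^k\bigl(1-\frac{\zeta_i\log(\rho_i)}{\log(\rho_1)}x\bigr)=k-\sum_{i=1}^k\frac{\zeta_i\log(\rho_i)}{\log(\rho_1)}x$. Adding these and collecting the coefficient of $x/\log(\rho_1)$ yields exactly $k+d-h+\bigl(\log(\rho_1\dotsm\rho_h)-\sum_{i=1}^k\zeta_i\log(\rho_i)\bigr)\frac{x}{\log(\rho_1)}$, which is $f_{h,k}(x)$; this proves \eqref{eq:hkequality}. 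One should note here that $\zeta_0=1$, $\rho_0=\rho_1$, $\rho_{d+1}=+\infty$ make the boundary cases $h=0$ or $k=0$ (where the corresponding sum is empty) consistent, so no separate treatment is needed.

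For the last assertion, fix $x\in J_k$ and let $h$ be the unique index with $x\in I_h$; then by what was just shown $f(x)=f_{h,k}(x)$, so it suffices to check $f_{h,k}(x)\le f_{h',k}(x)$ for all $h'\ne h$. Comparing, $f_{h',k}(x)-f_{h,k}(x)=(h-h')+\bigl(\log(\rho_1\dotsm\rho_{h'})-\log(\rho_1\dotsm\rho_h)\bigr)\frac{x}{\log(\rho_1)}$. For $h'>h$ this equals $\sum_{i=h+1}^{h'}\bigl(\frac{\log(\rho_i)}{\log(\rho_1)}x-1\bigr)$, which is $\ge 0$ because each such $i$ satisfies $i>h$, hence $\frac{\log(\rho_i)}{\log(\rho_1)}x>1$ by the characterization of $I_h$; for $h'<h$ it equals $\sum_{i=h'+1}^{h}\bigl(1-\frac{\log(\rho_i)}{\log(\rho_1)}x\bigr)\ge 0$ since each such $i\le h$. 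Thus $f(x)=\min_h f_{h,k}(x)$. The only real care needed anywhere is the monotonicity of the two families of thresholds and the bookkeeping of the degenerate endpoints; everything else is a direct computation.
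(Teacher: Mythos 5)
Your proposal is correct and follows essentially the same route as the paper: both evaluate $f$ coordinatewise using the characterizations of $I_h$ and $J_k$ (which rest on the monotonicity of $\log\rho_i$ and $\zeta_i\log\rho_i$) to get \eqref{eq:hkequality}, and then identify $\min_{h'}f_{h',k}(x)$ with $f_{h,k}(x)$ for the $h$ with $x\in I_h$. The only cosmetic difference is that you verify $f_{h,k}(x)\le f_{h',k}(x)$ by a direct telescoping comparison, whereas the paper uses a two-sided inequality sandwich; the underlying observation is identical.
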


\begin{proof}
For any $x\in I_h$ we have that
\begin{equation}
\label{eq:minh}
\sum_{i=1}^d\min\left\{\frac{\log(\rho_i)}{\log(\rho_1)}x,1\right\}=d-h+\sum_{i=1}^h\frac{\log(\rho_i)}{\log(\rho_1)}x,    
\end{equation}
while for any $x\in J_k$ we have that
\begin{equation}
\label{eq:maxk}
\sum_{i=1}^d\max\left\{1-\frac{\zeta_i\cdot \log(\rho_i)}{\log(\rho_1)}x,0\right\}=k-\sum_{i=1}^k\frac{\zeta_i\cdot \log(\rho_i)}{\log(\rho_1)}x.
\end{equation}
Hence, \eqref{eq:hkequality} follows. Further, it is clear that for any $h=1,\dotsc,d$ and any $x\in(0,1]$ it holds that 
$$d-h+\sum_{i=1}^{h}\frac{\log(\rho_i)}{\log(\rho_1)}x\geq \sum_{i=1}^d\min\left\{\frac{\log(\rho_i)}{\log(\rho_1)}x,1\right\},$$
so that for any $h,k$ and any $x\in(0,1]$
$$f_{h,k}(x)\geq \sum_{i=1}^d\min\left\{\frac{\log(\rho_i)}{\log(\rho_1)}x,1\right\}+k-\sum_{i=1}^k\frac{\zeta_i\cdot \log(\rho_i)}{\log(\rho_1)}x.$$
Taking the minimum in $h$, we conclude that for any $k$ and any $x\in(0,1]$
$$\min_{h}f_{h,k}(x)\geq \sum_{i=1}^d\min\left\{\frac{\log(\rho_i)}{\log(\rho_1)}x,1\right\}+k-\sum_{i=1}^k\frac{\zeta_i\cdot \log(\rho_i)}{\log(\rho_1)}x.$$
However, by \eqref{eq:minh}, if $x\in I_h$, the right-hand side is equal to $f_{h,k}$, so that the opposite inequality must also be true. Since both sides are independent of $h$ and the union of the intervals $I_h$ is the whole interval $(0,1]$, we conclude that for any $k$ and any $x\in (0,1]$ it holds that
\begin{equation}
\label{eq:interm}    
\min_{h}f_{h,k}(x)=\sum_{i=1}^d\min\left\{\frac{\log(\rho_i)}{\log(\rho_1)}x,1\right\}+k-\sum_{i=1}^k\frac{\zeta_i\cdot \log(\rho_i)}{\log(\rho_1)}x.
\end{equation}
Now, for any $x\in J_k$ we also have that
$$k-\sum_{i=1}^k\frac{\zeta_i\cdot \log(\rho_i)}{\log(\rho_1)}x=\sum_{i=1}^d\max\left\{1-\frac{\zeta_i\cdot \log(\rho_i)}{\log(\rho_1)}x,0\right\}.$$
Combining this with \eqref{eq:interm}, we deduce that for any $x\in J_k$ it must be
$$f(x)=\min_{h}f_{h,k}(x).$$
% whence, taking the maximum in $k$, we find that
% $$\max_{k}\min_h f_{h,k}(x)\leq f(x).$$
% However, by \eqref{eq:maxk} and \eqref{eq:interm}, for any $x\in J_k$ the opposite inequality holds. Since both sides are independent of $k$ and the union of the intervals $J_k$ is the whole interval $(0,1]$, we conclude.
\end{proof}

\noindent
Let us now set
$$f_{k}(x):=\min_{h}f_{h,k}(x)$$
for $x\in(0,1]$ and $k=0,\dotsc,d$.

\begin{lemma}
\label{lem:slopes}
For any $k=0,\dotsc,d$ the function $f_k(x)$ is a piece-wise linear function whose slopes are in decreasing order. In particular, on any closed interval contained in $[0,1]$ the minimum of the function $f_k(x)$ is attained at the endpoints.
\end{lemma}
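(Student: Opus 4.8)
The plan is to reduce everything to the elementary fact that a pointwise minimum of finitely many affine functions is concave.

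First I would record that each $f_{h,k}$ is an affine function of $x$: directly from its definition,
\[
f_{h,k}(x)=(k+d-h)+\sigma_h\,x,\qquad \sigma_h:=\frac{\log(\rho_1\dotsm\rho_h)-\sum_{i=1}^k\zeta_i\log(\rho_i)}{\log(\rho_1)},
\]
where the subtracted term $\sum_{i=1}^k\zeta_i\log\rho_i$ does not depend on $h$. Hence $f_k=\min_{1\le h\le d}f_{h,k}$ is a minimum of $d$ affine functions, so it is concave and piecewise linear with at most $d$ linear pieces on $[0,1]$; this already gives the ``piecewise linear'' assertion.

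Next I would identify the slopes of the pieces. On any maximal linear piece of $f_k$ the function coincides with some $f_{h,k}$, so the slope there equals $\sigma_h$; since $1<\rho_1\le\dotsb\le\rho_d$, the values $\sigma_1<\dotsb<\sigma_d$ are strictly increasing in $h$. The claim that the slopes of the consecutive pieces occur in decreasing order (reading $[0,1]$ from left to right) is then precisely the standard fact that a concave piecewise-linear function has non-increasing one-sided slopes: if the slope jumped up across some breakpoint $c$, then $f_k(c)$ would lie strictly below the chord joining $f_k(c-\varepsilon)$ and $f_k(c+\varepsilon)$ for small $\varepsilon>0$, contradicting concavity. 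Equivalently, as $x$ increases the active index $h$ can only decrease.

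Finally, the statement about minima on subintervals is immediate from concavity: if $[a,b]\subseteq[0,1]$ and $x=\lambda a+(1-\lambda)b$ with $\lambda\in[0,1]$, then $f_k(x)\ge\lambda f_k(a)+(1-\lambda)f_k(b)\ge\min\{f_k(a),f_k(b)\}$, so the minimum of $f_k$ over $[a,b]$ is attained at $a$ or at $b$. I do not anticipate a genuine obstacle here; the only points needing a little care are to note that the $h$-independent term $\sum_{i=1}^k\zeta_i\log\rho_i$ drops out of the slope comparison (so only the partial products $\rho_1\dotsm\rho_h$ matter), and to phrase ``decreasing order'' correctly as a statement about the slopes of consecutive linear pieces.
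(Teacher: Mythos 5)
Your proof is correct. It reaches the same conclusion as the paper but by a slightly different (and self-contained) route: you observe that $f_k=\min_h f_{h,k}$ is a pointwise minimum of affine functions, hence concave, and then both the monotone ordering of the slopes of consecutive pieces and the attainment of the minimum at the endpoints of any closed subinterval follow from concavity alone. The paper instead leans on the explicit decomposition already established in \eqref{eq:interm}/Lemma \ref{lem:charoff}, namely that $f_k=f_{h,k}$ on the interval $I_h$; since $I_h$ lies to the right of $I_{h+1}$ and the slopes $m_{h,k}$ increase with $h$ (because $\log\rho_{h+1}>0$), the slopes of successive pieces decrease from left to right. The two arguments use the same key computation (the slope of $f_{h,k}$ is $\bigl(\log(\rho_1\dotsm\rho_h)-\sum_{i=1}^k\zeta_i\log\rho_i\bigr)/\log\rho_1$, increasing in $h$ since the $k$-sum is independent of $h$), but your version does not require knowing where each piece is active, while the paper's version records exactly that information, which is what makes the evaluation $\min_{x\in J_k}f_k(x)$ at the endpoints of $J_k$ in the proof of Proposition \ref{prop:min} immediate. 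Either way the lemma is established.
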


\begin{proof}
By \eqref{eq:interm}, on the interval $I_h$ we have that $f_{k}=f_{h,k}$. Now, $f_{h,k}$ is a line with slope
$$m_{h,k}:=\frac{1}{\log(\rho_1)}\left(\log(\rho_1\dotsm\rho_h)-\sum_{i=1}^k\zeta_i\cdot\log(\rho_i)\right).$$
Since $J_h$ is to the right of $J_{h+1}$ and $m_{h,k}\leq m_{h+1,k}$ the first statement is proved. The second statement is then a direct consequence of the first one. 
\end{proof}

We are now in a position to prove Proposition \ref{prop:min}. By Lemma \ref{lem:charoff}, we have that 
\begin{equation*}
\min_{x\in[0,1]}f(x)=\min_k \min_{x\in J_k}f(x)=\min_{k}\min_{x\in J_k}f_k(x).
\end{equation*}
Now, by Lemma \ref{lem:slopes}, we have that
$$\min_{x\in J_k}f_k(x)=\min\left\{f_{k}\left(\frac{\log(\rho_1)}{\zeta_{k}\cdot\log(\rho_k)}\right),f_{k}\left(\frac{\log(\rho_1)}{\zeta_{k+1}\cdot\log(\rho_{k+1})}\right)\right\}.$$
Since
$$f_{k}\left(\frac{\log(\rho_1)}{\zeta_{k+1}\cdot\log(\rho_{k+1})}\right)=f_{k+1}\left(\frac{\log(\rho_1)}{\zeta_{k+1}\cdot\log(\rho_{k+1})}\right)$$
and $f(0)=f(1)=d$, we have that
$$\min_{x\in[0,1]}f(x)=\min_{k=1,\dotsc,d} f_{k}\left(\frac{\log(\rho_1)}{\zeta_{k}\cdot\log(\rho_k)}\right).$$
In particular, we can always exclude the fact that the minimum is $d$ because the Hausdorff dimension of the set $W_{\bs w}(\tau)$ for $\tau>1$ is strictly less than $d$.
This proves the claim.

\subsection{Conclusion}

By Proposition \ref{prop:min} and \eqref{eq:mu}, we have that the Cantor set constructed in Subsection \ref{sec:3.2} has Hausdorff dimension bounded below by

$$\min_{h,k}\sum_{i=1}^h\frac{1+\tilde{w}_i}{1+\tau w_k}+d-h+k-\sum_{i=1}^{k}\frac{1+\tau w_i}{1+\tau w_k}+O_{d,\tau,\bs w,\delta}\left(\frac{1}{\log R}+\gamma\right),$$
where we replaced $n^{-1}$ by $n_0^{-1}\leq (\log R)^{-1}$. Since $R$ and $\gamma$ are arbitrary, the set $E_{\bs w}(\tau)$ has dimension at least
$$\min_{h,k}\sum_{i=1}^h\frac{1+\tilde{w}_i}{1+\tau w_k}+d-h+k-\sum_{i=1}^{k}\frac{1+\tau w_i}{1+\tau w_k}.$$

Now, there are two cases: either the minimum is realized for $k\leq K$ or for $k>K$, where $K$ is defined in Lemma \ref{lem:Rynne}. Let us assume first that the minimum is realized for $k\leq K$. Then, since $\tilde w_i\leq \tau w_{i}$ it must be $h\geq k$ and we have that
\begin{multline*}
\sum_{i=1}^h\frac{1+\tilde{w}_i}{1+\tau w_k}+d-h=\sum_{i=1}^k\frac{1+\tilde{w}_i}{1+\tau w_k}+\sum_{i=k+1}^h\frac{1+\tilde{w}_i}{1+\tau w_k}+d-h \\
\geq \sum_{i=1}^k\frac{1+\tau w_i-\delta(1+\tau w_i)}{1+\tau w_k}+\sum_{i=k+1}^h\frac{1+\tau w_{k}-\delta(1+\tau w_k)}{1+\tau w_k}+d-h \\
\geq \sum_{i=1}^k\frac{1+\tau w_i}{1+\tau w_k}+d-k-\delta(d+\tau).
\end{multline*}
Hence, the dimension is at least $d-\delta(d+\tau)$. If we assume that
$$\delta(\tau +d)<d-\dimH W_{\bs w}(\tau),$$
we find a contradiction. Hence, the minimum must be realized for $k>K$. Since $1+\tilde w_k\leq 1+\tau w_k$ and $\tilde w_i=\tilde w_j$ for any $i,j>K$, it must be $h=d$. The expression above is therefore equal to
$$\min_k\frac{d+1-\sum_{i=1}^k\tau w_k-\tau w_i}{1+\tau w_k},$$
which, by Theorem \ref{thm:Rynne}, yields Theorem \ref{thm:mainthm}.
 \bibliographystyle{alpha}
\bibliography{References}
\end{document}